%
\documentclass[final]{siamart1116}

\oddsidemargin 0mm\evensidemargin 0mm
\textwidth 165mm
\topmargin -9mm
\textheight 230mm
\allowdisplaybreaks
\usepackage{pgf}
\usepackage{lipsum}
\usepackage{amsfonts}
\usepackage{graphicx}
\usepackage{epstopdf}
\usepackage{algorithmic}
\ifpdf
  \DeclareGraphicsExtensions{.eps,.pdf,.png,.jpg}
\else
  \DeclareGraphicsExtensions{.eps}
\fi
\usepackage{color}
\usepackage{epsfig}
\usepackage{psfrag}
\usepackage{graphicx}
\usepackage{textcomp}
\usepackage{pgf}   
\usepackage{upgreek} 
\usepackage{mathrsfs,cite}

\numberwithin{theorem}{section}

\newcommand{\WE}[1]{{\color{black}#1}}
\newcommand{\we}[1]{{\color{black}#1}}
\newcommand{\REF}[1]{{\color{black}#1}}
\newcommand{\REFF}[1]{{\color{black}#1}}
\newcommand{\reff}[1]{{\color{black}#1}}

\newcommand{\TheTitle}{Thermodynamics of 
elastoplastic porous
rocks at large strains \REFF{towards} earthquake
\REFF{modeling}
} 
\newcommand{\TheAuthors}{T. Roub\'i\v cek and U. Stefanelli}

\newtheorem{remark}[theorem]{Remark}

\newcommand{\ITEM}[2]{\parbox[t]{.05\textwidth}{{\rm #1}}\hfill\parbox[t]{.95\textwidth}{#2}\vspace*{.8mm}}

\marginparwidth=1.7cm
        \newcommand{\COMMENT}[1]{}



\newcommand{\EE}{\color{black}} 

\newcommand{\R}{\mathbb{R}}

\newcommand{\bbI}{\mathbb{I}}
\newcommand\DT[1]{\mathchoice
                 {{\buildrel{\hspace*{.1em}\text{\LARGE.}}\over{#1}}}
                 {{\buildrel{\hspace*{.1em}\text{\Large.}}\over{#1}}}
                 {{\buildrel{\hspace*{.1em}\text{\large.}}\over{#1}}}
                 {{\buildrel{\hspace*{.1em}\text{\large.}}\over{#1}}}}
\newcommand\DDT[1]{\mathchoice
   {{\buildrel{\hspace*{.13em}\text{\LARGE.\hspace*{-.13em}.}}\over{#1}}}
   {{\buildrel{\hspace*{.1em}\text{\Large.\hspace*{-.1em}.}}\over{#1}}}
   {{\buildrel{\hspace*{.1em}\text{\large.\hspace*{-.1em}.}}\over{#1}}}
   {{\buildrel{\hspace*{.1em}\text{\large.\hspace*{-.1em}.}}\over{#1}}}}

\newcommand{\linesunder}[3]{\LSU{\begin{array}[t]{c}\underbrace{#1}\vspace*{.5em}\end{array}}{\mbox{\footnotesize\rm #2}}{\mbox{\footnotesize\rm#3}}}

\newcommand{\LSU}[3]{\begin{array}[t]{c}#1\vspace*{-1em}\\_{#2}\vspace*{-.3em}\\_{#3}\end{array}}
\newcommand{\Vdots}{\vdots}
\renewcommand{\d}{{\rm d}}
\newcommand\Frakm{\text{\large$\mathbb{M}$}}
\newcommand\Frakk{\text{\large$\mathbb{K}$}}

\newcommand\FT{\psi_{_{\rm T}}}
\newcommand\FM{\psi_{_{\rm M}}}
\newcommand\widehatFM{\widehat\psi_{_{\rm M}}}
\newcommand\PsiT{\Psi_{_{\rm T}}}
\newcommand\PsiM{\Psi_{_{\rm M}}}

\newcommand\PP{\varPi}
\newcommand\PR{P}
\newcommand\Cof{\mathrm{Cof}}

\newcommand{\divS}{\mathrm{div}_{\scriptscriptstyle\textrm{\hspace*{-.1em}S}}^{}}
\newcommand{\tauR}{\tau_{\scriptscriptstyle\textrm{\hspace*{-.3em}\rm rel}}^{}}
\newcommand{\UU}[2]{\begin{array}[b]{c}_{\mbox{\footnotesize{#2}}}\vspace*{-.5mm}\\#1\end{array}}
\newcommand{\por}{\mu}
\newcommand{\GG}{G}
\newcommand{\PORO}{\sigma} 

\headers{Thermodynamics of porous rocks at large strains}{\TheAuthors}

\title{{\TheTitle}\thanks{Submitted to the editors DATE.
\funding{The authors acknowledge the hospitality and the support of the
Erwin Schr\"odinger Institute of the University of Vienna, where most
of this research has been performed. T.R.\ acknowledges also the support of 
CSF (Czech Science Foundation) project 16-03823S 
and 17-04301S 
and also by the Austrian-Czech projects 16-34894L (FWF/CSF) and 7AMB16AT015 
(FWF/MSMT CR) as well as through the institutional support RVO:\,61388998 
(\v CR). U.S.\ acknowledges the support of the Austrian Science Fund (FWF)
 projects  F\,65,  P\,27052, and I\,2375 and of the Vienna Science and 
Technology Fund (WWTF)project MA14-009.}}}

\author{
  Tom\' a\v s Roub\'i\v{c}ek\thanks{Mathematical Institute, Charles University,
Sokolovsk\'a 83, CZ-186~75~Praha~8,  Czech Republic and 
Institute of Thermomechanics, Czech Academy of Sciences,
Dolej\v skova 5, CZ-182~00~Praha~8, Czech Republic
   (\email{tomas.roubicek@mff.cuni.cz}).}
  \and
  Ulisse Stefanelli\thanks{Faculty of Mathematics, University of
    Vienna, Oskar-Morgenstern-Platz 1, 1090 Vienna, Austria
 and Istituto di Matematica Applicata e Tecnologie Informatiche
  E. Magenes - CNR, v. Ferrata 1, 27100 Pavia, Italy
  (\email{ulisse.stefanelli@univie.ac.at}).}
}

\usepackage{amsopn}


\ifpdf
\hypersetup{
  pdftitle={\TheTitle},
  pdfauthor={\TheAuthors}
}
\fi




\begin{document}

\maketitle

\begin{abstract}
A mathematical model for an elastoplastic porous continuum subject to large 
strains in combination with reversible damage (aging), evolving porosity, 
water and heat transfer is advanced. The inelastic response is modeled within 
the frame of plasticity for 
nonsimple materials. Water and heat diffuse through the continuum by a 
generalized Fick-Darcy law in the context of viscous Cahn-Hilliard dynamics and 
by Fourier law, respectively. This coupling of phenomena is paramount to the 
description of lithospheric faults, which experience ruptures (tectonic 
earthquakes) originating seismic waves and flash heating. In this regard,
we combine in a thermodynamic consistent way the assumptions of having a small 
\WE{Green-Lagrange} elastic strain and \WE{nearly isochoric plastification} 
with the very large displacements generated by fault shearing. The model is 
amenable to a rigorous mathematical analysis. Existence of suitably defined 
weak solutions and a convergence result for Galerkin approximations is proved.

\end{abstract}

\begin{keywords}
Geophysical modeling, heat and water transport, Biot model of poroelastic 
media, damage, tectonic earthquakes, Lagrangian description, 
energy conservation, frame indifference, Galerkin approximation, 
convergence, weak solution.
\end{keywords}

\begin{AMS}
35Q74, 
35Q79, 
35Q86, 
65M60 
74A15, 
74A30, 
74C15, 
74F10, 
74J30, 
74L05, 
74R20, 
76S05, 
80A20, 
86A17. 
\end{AMS}

\baselineskip=13pt

\section{Introduction} The global movement of 
tectonic plates in the upper lithospheric mantle originates
{\it tectonic earthquakes}.  These occur on {\it fault zones},
which are relatively 
localized regions of partly damaged rocks with weakened elastic properties and 
weakened shear-stress resistance.  Tectonic earthquakes  are very 
complex thermomechanical events,  often having a   devastating
 societal and economical impact. Correspondingly, they are
intensively investigated by the   geophysical community  under  
various aspects, ranging  from  observation,  to experiments and modeling. 
Despite the extensive information available, the possibility of offering 
reliable prediction of future events seems to be still out of reach 
\cite{Cocc15AT}. 

The dynamics of every lithospheric fault is to some extent unique and 
is often part of a complex and  mutually interacting system. Some typical 
fault  geometry,  although necessarily very idealized with respect to real 
systems but nevertheless used in numerical simulations 
\cite{LyaBeZ09EGMP,LyHaBZ11NLVE}, is depicted in Figure~\ref{fig-geom}. 
\begin{figure}[th]
\begin{center}
\psfrag{x1}{\footnotesize $x_1$}
\psfrag{x2}{\footnotesize $x_2$}
\psfrag{W}{\footnotesize $\Omega$}
\psfrag{y(W)}{\footnotesize $y(\Omega)$}
\psfrag{y}{\footnotesize $y$}
\psfrag{eps1}{\footnotesize $\epsilon_1^{}$}
\psfrag{eps2}{\footnotesize $\epsilon_2^{}$}
\psfrag{compact rock}{\scriptsize {\bf compact rock}}
\psfrag{damaged rock}{\scriptsize {\bf damaged rock}}
\psfrag{core zone}{\scriptsize {\bf core zone}}
\psfrag{g}{\footnotesize $g$}
\psfrag{reference configuration}{\footnotesize {\bf reference (material) configuration}}
\psfrag{actual configuration}{\footnotesize {\bf actual (space) configuration}}
\includegraphics[width=.9\textwidth]{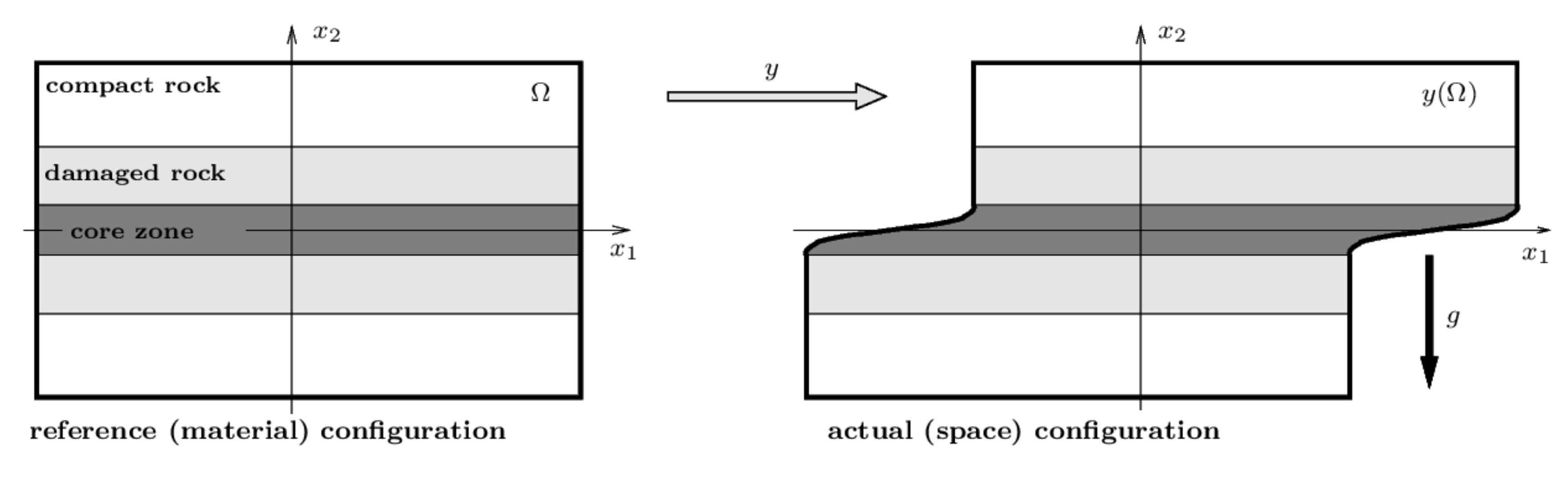}
\end{center}
\caption{\sl Schematic geometry of the fault zone  in the  material reference 
Lagrangian configuration (left)
and the actual space Eulerian configuration deformed by a mapping 
$y\in H^2(\Omega;\R^d)$ (right).  The  (possibly inhomogeneous) gravity force 
$g$ is prescribed naturally in the space configuration.}
\label{fig-geom}
\end{figure} 
As effect of a deformation, the fault zone is sheared and damage is 
accumulated in a relatively narrow region ({\it damage region}) with a width 
of tens to hundreds of meters. Strains are mainly concentrated in the even 
narrower {\it core zone}, whose width ranges typically from centimeters to 
meters. The core can accommodate slips of the order of kilometers within 
millions of years \cite{BeZiSam03CFZ}. This distinguished, multi-scale
nature of fault dynamics can be tackled at different levels, ranging from the 
continuum-Euclidean (here meaning {\it continuum mechanical or 
thermomechanical}) description \REFF{of the} faults, to the granular 
description of fault structures and deformation fields, to the fractal nature 
of the fault network \cite{BeZiSam03CFZ}. 

The focus \REFF{of this contribution is on a} description of seismic 
processes on faults by advancing a thermodynamically consistent model of 
{\it large-strain dynamics} in poroelastic rocks in terms of deformation, 
temperature, plastic and damage dynamics, \REFF{and} water content and 
porosity evolution. The model is detailed in Section \ref{sec-model} below 
and includes in particular the following main features:\\
\ITEM{$\ \circ$}{{\it Large-strain elastoplastic} response combined with fast {\it damage}
 ({\it rupture}) and {\it emission of seismic waves} and their
propagation.}
\ITEM{$\ \circ$}{{\it Flash heating:} intense heat production during strong earthquakes 
influencing damage and sliding resistance, as in the  Dieterich-Ruina 
friction model \cite{Diet79MRF,Ruin83SISVFL}, see also
\cite{Roub14NRSD}.}
\ITEM{$\ \circ$}{Modelization of saturated {\it water flow} and its influence on
material response and eventually on earthquake dynamics \cite{Biot41GTTS}.}
\ITEM{$\ \circ$}{{\it Healing} (also called {\it aging}) and gradual conversion of 
elastic strain to permanent inelastic deformation during 
{\it long-lasting creep} and material degradation.}

The evolution in time of poroelastic rocks in upper lithospheric mantle 
are described as originating by the balance of energy-storage and dissipation 
mechanisms. In particular, we focus on a general form of free energy. 
This loses convexity upon damaging,  as proposed in 
\cite{LRWS97NEBD} and later used in several articles as, e.g.\ 
\cite{HaLyAg05DRDC,LyaHam07DEFF,LyHaBZ11NLVE}.  Such free energy is
augmented by nonlocal energetics in form of a gradient damage and plastic 
theory \cite{LyHaBZ11NLVE,LyaBeZ14CDBF} \WE{and a strain gradient} 
in the frame of so-called 
{\it 2nd-grade nonsimple materials} \cite{FriGur06TBBC,Podi02CISM,Silh88PTNB}
(proposed under the name \REF{``}materials with \REFF{couple stresses''} by 
R.A.\,Toupin 
\cite{Toup62EMCS}). \WE{Such materials are also known as  {\it weakly
    nonlocal}. Nonlocal-material concepts have the capacity to be fitted 
with dispersion of elastic waves in general, cf.\ \cite{Jira04NTCM} for a 
thorough discussion.}  
This effectively entails the control the scale of
the damage and core regions. Eventually, the distinguished variational 
structure for the model allows allow for a comprehensive mathematical 
treatment, including existence of suitably-defined weak solutions, and 
convergence of a Galerkin approximation combined with a regularization.

With respect to previous geophysical modeling 
\cite{LyBZAg97DDFF,HaLyAg05DRDC,LyaHam07DEFF,LyHaBZ11NLVE,LyaBeZ14CDBF} 
the novelty of our contribution is threefold. \\
\ITEM{(i)}{
Our model deals with large strains in a thermodynamically consistent  way. 
This seems to move substantially forward with respect to the current
literature, where
%
%
description are either restricted to small strains
\REFF{or combines small elastic strains with large displacements (but
not completely consistently, as noticed in \cite{Roub17GMHF}).}
The present model possesses a clear \REFF{global} energetics 
\REFF{which can serve for a-priori estimates and rigorous analysis}.
}
\ITEM{(ii)}{
By taking advantage of the variational nature of the model we are in the 
position of presenting a full coupling of effects. Mechanical and thermal 
evolution are consistently coupled with  
damage, porosity, and water content dynamics via the specification
of the energy and dissipation potentials. Constitutive relations
are directly defined in terms of variations of these potentials and
combined with conservation of momenta and energy and internal dynamics.}
\ITEM{(iii)}{
We derive a sound approximation and existence theory. In particular, we 
present a stable and convergent Galerkin-approximation scheme. This is 
unprecedented, to our knowledge, for such a comprehensive model at finite 
strains. One has to remark that the implementation of large-strain models is 
often computationally challenging in comparison to the small-strain
models. Nevertheless, actual computations based on an updated Lagrangian 
scheme (see \cite{Cund89NELF}, for instance) may 
\REF{combine with }
the present model toward simulations.}
\REFF{One has to mention that poroelastic models at large strains
  have already been considered from the engineering viewpoint, cf.\ 
\cite{Boer05TCMP,CheAna10CTFP,DuSoFi10TSMF,GovSim93CSD2,HZZS08TCDL},
where nevertheless no rigorous analysis is addressed.}

The plan of the paper is as follows. In Section~\ref{sec-model} we formalize 
the model. In particular, we specify the form of the total energy and of 
the dissipation. This 
\REF{leads} to the formulation of an evolution system of 
partial differential equations and inclusions. The thermodynamic consistency 
of the model and various possible modifications are also discussed. 
Section~\ref{sec-anal} presents a variational notion of solution as well as 
the main analytical statements. The existence proof via Galerkin's 
approximation is then detailed in Section~\ref{sec-proof}, with most
technical mathematical arguments being related to the 
thermoviscoplasticity and just refer to \cite{RouSte??TELS}. 
Finally, Sect.\,\ref{sec-concl} discusses some improvements of the model
and related analytical complications.


\section{Thermodynamical modeling}\label{sec-model}
We devote this section to present our general model
for damageable poroelastic continua with water and heat transfer. This
is formulated in Lagrangian  coordinates with $\Omega\subset\R^d$ ($d=2$ or  
$3$) being a bounded smooth reference (fixed) configuration. 
 The  variables of the model are
\begin{align*}
&y:\Omega \to \R^d \quad&&\text{deformation,}&&\\ 
&\PP:\Omega \to
\WE{{\rm GL}^+(d)}
&&
\text{plastic part of the inelastic
strain,}\\
&\alpha:\Omega \to \R\quad&&\text{damage  descriptor  (also called aging),}\\
&\phi: \Omega \to \R\quad&&\text{porosity (effectively 
the volumetric part of the inelastic strain)},\\ 
&\zeta: \Omega \to [0,1]\quad&&\text{volume fraction of water,}\\ 
& \theta:\Omega \to (0,\infty)\quad&&\text{absolute 
temperature,} 
\end{align*}
\WE{where ${\rm GL}^+(d)$ denotes the general linear group of matrices
from $\R^{d\times d}$ with positive determinant.}
%
\REFF{We emphasize that, although we have in mind saturated flows, we 
distinguish between water content and porosity. Indeed, compared to
rocks, water is substantially 
compressible. Note that in the standard Biot model $\zeta\sim\phi$ can be
achieved only asymptotically if $\beta=0$ and 
$m\to\infty$ in \eqref{ansatz} below. Beside this interpretation, one can
also think about a double-porosity model where the diffusant is
transfered only by one system of pores.}

For convenience,  we anticipate in Table~\ref{Tab_Notation}
the main notation, to be introduced in this section\REF{; for 
basic notions from continuum (thermo/poro)mechanics at large strains we
refer e.g.\ to the monographs 
\cite{Antm95NPE,Bear18MPFT,Boer05TCMP,Ciar88ME1,Cous04P,GroMaz84NET,Gurt81TFE,KruRou18MMCM}}. \REFF{In particular, note that $P$ is the rate of plastic 
strain in the intermediate configuration \cite{Miel02FELG}.
Here we should also note that we follow the terminological 
conventions in mechanics, which differs from what is used in engineering. 
In particular, we call {\it stored energy} all temperature-independent terms in the free energy. 
}

\begin{table}[ht]
\centering
\fbox{
\begin{minipage}[t]{0.43\linewidth}
\small

$\Omega$ reference  configuration, 

$\Gamma$ boundary of $\Omega$,

$I:=[0,T]$ fixed time interval,

$Q:=I{\times}\Omega$,

$\Sigma:=I\times\Gamma$,

$\varSigma_{\rm el}$  
\REFF{first}  Piola-Kirchhoff stress,

$\varrho$ mass density (constant), 








$F=\nabla y$ deformation gradient, 

$F_{\rm el}$ elastic 
\REFF{part of $F$},

$E_{\rm el}$ elastic Green-Lagrange 
\REFF{strain},


$c_{\rm v}(\theta)$ heat capacity,

$\Frakk(
\zeta,\theta)$ heat-conductivity tensor,

$\mathscr{K}(\PP,\phi,\zeta,\theta)$ pull-back of $\Frakk(\zeta,\theta)$

$\vartheta$  rescaled temperature, 

$\eta$ entropy \REFF{(per unit reference volume)}, 


$m=m(\alpha,\phi)$ Biot modulus,

$\beta$ Biot coefficient,

$\lambda=\lambda(\alpha,\phi)$ first Lam\'e coefficient, 

$\GG=\GG(\alpha,\phi)$ shear modulus, 

$p_{\rm por}$ pore pressure,

$p_{\rm age}$ driving pressure for aging,

$p_{\rm eff}$ driving pressure for porosity,

\end{minipage}
\hfill 
\begin{minipage}[t]{0.53\linewidth}
\small

$\psi$ free energy \REFF{(in the reference configuration)},

$\FM$ mechanical part of $\psi$,

$\FT$ thermal part of $\psi$,

$\varSigma_{\rm in}$  driving stress for the plastification,

$\PR$ a placeholder for plastic rate $\DT\PP\PP^{-1}$,


$\mathfrak{R}$ dissipation potential for plastification,

$\mathfrak{D}$ dissipation potential for damage/porosity,


$\gamma=\gamma(\alpha,\phi)$ non-Hookean 
\WE{elastic modulus}, 

$\PORO=\PORO(\phi)$ porosity spherical strain influence,



$r$ dissipated mechanical energy rate,




$\Frakm=\Frakm(\alpha,\phi
)$ hydraulic conductivity, 

$\mathscr{M}(\PP,\alpha,\phi)$ pull-back of $\Frakm(\alpha,\phi)$,



$g$ gravity force in the actual space configuration,

$y_\flat$ external displacement loading,

$N$ constant of the elastic support, 


$\mu_\flat$ external chemical potential,

$M$  permeability at the boundary,

$K$ boundary heat-transfer coefficient,

$\theta_\flat$ external temperature,


$\chi$ specific stored energy of damage,

$\kappa_0$, $\kappa_1$, $\kappa_2$, $\kappa_3$, $\kappa_4$
length-scale coefficients,

$\tauR$ relaxation time for chemical potential.
\end{minipage}
}
\vspace{.4em}
\caption{Summary of the basic notation used through the paper.}
\label{Tab_Notation}
\end{table}
The model will result by combining momentum and energy
conservation with the dynamics of internal variables. In order to
specify the latter and provide constitutive relations, we introduce a
free energy and a dissipation (pseudo)potential in the following subsections.

\subsection{Small-strain mechanical stored energy}
 A crucial novelty of the present modelization is that of dealing with
finite strains. In order to motivate our assumptions on the mechanical stored 
energy in the coming Subsection \ref{sec:mse}, let us comment on a classical 
choice in the small-strain regime, namely 
\begin{align}
\frac12\lambda(\alpha)
I_1^2
+\GG(\alpha)I_2
-\gamma(\alpha)I_1\sqrt{I_2}
+\frac12m
|\beta I_1{-}\zeta{+}\phi|^2 
\label{ansatz} 
\end{align}
with $\lambda(\alpha)=\lambda_0, \
\GG(\alpha)=\GG_0-\alpha\GG_\mathrm{r},\
\gamma(\alpha)=\alpha\gamma_\mathrm{r}$, and with $I_1={\rm
  tr}\,e_{\rm el}$ and $I_2=|e_{\rm el}|^2$  and 
  $e_{\rm el}$  
\WE{denoting} the elastic part of the small 
strain. \REFF{When the undamaged-rock initial condition $\alpha_0=0$ is considered,
the values $\lambda_0$ and $\GG_0$ are the initial values 
of the elastic moduli $\lambda(\alpha)$ and $\GG(\alpha)$ 
in the rock (disregarding porosity, which is later considered), while 
$\GG_\mathrm{r}>0$ and $\gamma_\mathrm{r}>0$ are suitable constants (in MPa).} 
For $d=3$,  the so-called strain invariant ratio $I_1\sqrt{I_2}$
varies from $-\sqrt{3}$ for isotropic compaction to $\sqrt{3}$
 for  isotropic dilation. The Biot $m$-term  is an
extension of   the usual isotropic  response of a 
Lam\'e material with constants $\lambda$ and $\GG$ (\WE{latter} also called 
the shear modulus).  Such extension  was suggested in 
\cite{Biot41GTTS} and later augmented by the nonlinear (also called 
non-Hookean)  $\gamma$-term   in \cite{Biot73NSRP} at small
strains. The special form  of this last term  was suggested in 
\cite{LyaMya84BECS} (alternatively  considered as 
$-\gamma I_1\sqrt{I_2{-}I_1^2/3}$ in \cite{LRWS97NEBD}), 
validated, and  used in series of works 
\cite{HaLyAg04EDPP,HaLyAg05RDND,LyaBeZ08SREA,LyaBeZ09EGMP,LyaHam07DEFF,LyZhSh15VPDM}.
 The reader is referred to \cite{HaLyBZ11ESED} for a comprehensive
discussion on such choices.  
 
Note that  the above-introduced mechanical stored energy  is 2-homogeneous in terms of $e_{\rm el}$
and  that  the  function $\gamma$ in the 
$\gamma$-term  makes \REF{it} nonconvex if the damage  parameter 
$\alpha$ is sufficiently  large.  This  induces a loss of positive-definiteness
of the Hessian of \eqref{ansatz} which is
intended to model the loss  
of stability of the rocks under damage, cf.\ \cite{LyaBeZ08SREA}.  
 From the mathematical standpoint, this feature makes the analysis
challenging as   both coercivity and monotonicity of
the driving force  fails. This seemingly prevents  any rigorous
existence theory even  for short times,  due to possible stress concentration. 
A possible way out from this obstruction was proposed  in \cite{Roub17GMHF} by 
considering  nonsimple materials and by a regularization of the stored energy. 
An analogous regularization will be here considered. Indeed, we will replace 
the term $\gamma(\alpha)I_1\sqrt{I_2}$ by a bounded term 
$\gamma(\alpha)I_1\sqrt{I_2}/(1{+}\epsilon I_2)$ with a small, user-defined 
parameter $\epsilon>0$. 


In the small-strain setting, the following additive decomposition of
the total small strain is  often considered
\begin{align}\label{split-additive}
e(u)=e_{\rm el}+e_{\rm pl}
-\PORO(\phi)\bbI.
\end{align}
 Here, $e_{\rm pl}$ is the trace-free  plastic-strain tensor and 
$\PORO:[0,1]\to\R$ represents the pore volume (note that this is taken as
$\PORO(\phi)=-\phi/3$ in \cite{HaLyAg04EDPP,LyaHam07DEFF}). In 
\cite{LyaBeZ08SREA}, $e_{\rm pl}$ is eventually decomposed \REF{into} the sum of 
a damage-related inelastic strain and a creep-induced ductile strain 
(with a Maxwellian viscosity  of the order of  $10^{22\pm2}$Pa\,s), a
distinction which we neglect here.

\subsection{Mechanical stored energy}\label{sec:mse}
A focal point of our model is to move from the small to the finite strain 
situation. In particular, by replacing the small strain $e_{\rm el}$  with the 
elastic \REFF{Green-Lagrange} strain 
\REFF{$E_{\rm el}=\frac12(F_{\rm el}^\top F_{\rm el}^{}-\bbI)$}, we correspondingly 
consider the mechanical stored energy (compare with \eqref{ansatz}) 
\WE{as a function of the elastic strain $F_{\rm el}^{}$ as}
\begin{align}\label{ansatz+}
\FM&=\FM(F_{\rm el}\WE{,\PP},\alpha,\phi,\zeta)=\frac12\frac{\lambda(\alpha,\phi)
I_1^2}{\sqrt[4]{1{+}\epsilon I_2}}
+\frac{\GG(\alpha,\phi)I_2}{\sqrt[4]{1{+}\epsilon I_2}}
-\gamma(\alpha,\phi)\frac{I_1\sqrt{I_2}}{1{+}\epsilon I_2}
\\&\nonumber\qquad\qquad\qquad\qquad\qquad
\WE{+\varpi(\det\PP)}
+\frac12m(\alpha,\phi)
\frac{|\beta I_1{-}\zeta{+}\phi|^2}{\sqrt[4]{1{+}\epsilon I_2}}
+\chi(\alpha)
\end{align}
where now 
$I_1={\rm tr}\,E_{\rm el}$ and
$I_2=|E_{\rm el}|^2$
with $E_{\rm el}=\displaystyle{\frac{F_{\rm el}^\top F_{\rm el}-\bbI}2}$
so that $I_1\sqrt{I_2}={\rm tr}\,E_{\rm el}|E_{\rm el}|$. \WE{The $\varpi$-term 
is a modelling ansatz to ensure the plastic deformation to
be nearly isochoric, i.e.\ $\det\PP\sim1$. In combination with the
plastic-gradient term, this ensures local invertibility of the
plastic strain.
Formally, such a term acting on 
$\PP$ is in the position of isotropic hardening (typically occurring in
metals). Such hardening effect is indeed not relevant in modelling of
rocks or soils.  It should be
emphasized that, as here it controls only the volumetric part of $\PP$, 
it does not cause any undesired hardening effects when plastifying the rocks 
in a isochoric way.} The term $\chi(\alpha)$ in 
the right-hand side of \eqref{ansatz+} is the energy of damage and contributes 
an additional driving force for healing if $\chi'>0$. \REFF{The
  $\chi$-term can be microscopically interpreted as an extra energy contribution related with microvoids
or microcracks, arising due to macroscopical damage. This term is indeed a 
stored energy, although if damage would be unidirectional (without healing),
this energy would be effectively dissipated. Even in this case,
however, it would not contribute to the 
heat production, differently from truly dissipative terms. Also let us note that,} for $\epsilon=0$, \eqref{ansatz+} is an 
obvious analog of \eqref{ansatz}. Henceforth, we will however stick with a 
small but fixed $\epsilon>0$ in \eqref{ansatz+}. This yields a 3rd-order
polynomial growth of $\FM$ with respect to $F_{\rm el}$, which in turn
ensures that its derivative  has a  2nd-order polynomial 
growth.  In
particular,  all  driving forces  of the system, to be
defined in  (\ref{stresses}b,d) below, will turn
out to  belong to  $L^2$ spaces. Before moving on, let us
remark that the above choice of $\FM$ could be generalized, as long as
growth and smoothness properties are conserved. We shall however stick
to \eqref{ansatz+} for the sake of comparison with the small-strain
theory. 

A possible choice for the dependence of the nonlinearities
in \eqref{ansatz+} is 
\begin{subequations}\label{porosity-ansatz}
\begin{align}
&\lambda(\alpha,\phi)=(\lambda_0-\alpha\lambda_{\rm r})
\big(1{-}\phi/\phi_{\rm cr}\big),\ \ \ \ &&
\\&
\GG(\alpha,\phi)
=\big(\GG_0-\alpha\GG_\mathrm{r}\big)\big(1{-}\phi/\phi_{\rm cr}\big),\ \ \ \ 
\\&\gamma(\alpha,\phi)=\alpha\gamma_\mathrm{r}\big(1{-}\phi/\phi_{\rm cr}\big),
\ \ \ \ 
\\&m(\alpha,\phi)=\alpha m_0\big(1{-}\phi/\phi_{\rm cr}\big),
\end{align}
\end{subequations}
cf.\ \cite{HaLyAg04EDPP,LyaHam07DEFF},
where $\phi_{\rm cr}$ denotes the porosity upper bound in which the 
material loses its stiffness. \REFF{A typical value of $\phi_{\rm cr}$ 
used in geophysical applications is rather high, strong sandstones 
(e.g.\ Berea) may be $\sim$20\% while in other rocks it migh be even 
30\% or 40\%.
As for $\chi(\cdot)$, this direct damage energy is not considered
in the mentioned geophysical literature 
but it has a clear interpretation (as already explained) and may be a 
reasonable source of healing in addition to that healing due to 
$-\alpha \lambda_{\rm r}$ and $-\alpha\GG_\mathrm{r}$ terms in 
(\ref{porosity-ansatz}a,b). Besides, this term 
may also contribute to localization of damaged regions, which is 
routinely used in fracture mechanics under the name a ``phase-field fracture''.} In the case of an undamaged nonporous rock (i.e.\ 
$\epsilon=\alpha=\phi=0$  so that $\gamma(\alpha,\phi) =
m(\alpha,\phi) = \chi(\alpha)=0$ as well),  the  mechanical
stored energy \eqref{ansatz+} reduces to 
${\lambda_0}({\rm tr}E_{\rm el})^2/2+\GG_0|E_{\rm el}|^2$, namely to 
\WE{the} classical 
{\it St.\,Venant-Kirchhoff } material. As $\FM$ depends on the elastic 
Cauchy-Green tensor $F_{\rm el}^\top F_{\rm el}$ \WE{and
  $\PP^\top\PP$} rather than on $F_{\rm el}$ \WE{and $\PP$, so that}
the mechanical energy is \WE{both} {\it frame-} \WE{and {\it plastic-indifferent}}, namely 
   \begin{align}
    &\forall R_1,\WE{R_2}\in{\rm SO}(d):\ \ \FM(R_1F_{\rm
      el}\WE{,R_2\PP},\alpha,\phi,\zeta)= \FM(F_{\rm
    el}\WE{,\PP},\alpha,\phi,\zeta), \label{frame-indif}  
  \end{align} 
Here, we used the notation ${\rm SO}(d)$ for the matrix group 
${\rm SO}(d):=\{\WE{R}\in\R^{d\times d},\ \WE{RR}^\top=\WE{R}^\top 
\WE{R}=\mathbb{I},\ \det\WE{R}=1\}$ where the superscript $\top$ stands for 
transposition and $\mathbb{I}$ is the identity matrix. 

The additive decomposition \eqref{split-additive} from the
small-strain case is no longer available and one has to replace it
with the standard \WE{Kr\"oner-Lee-Liu} 
multiplicative decomposition \cite{Kron60AKVE,LeeLiu67FSEP} 
\begin{align}\label{split}
F=F_{\rm el}\PP S\ \ \text{ with }F=\nabla y\ \text{ and }\ S=S(\phi)=
\bbI/\PORO(\phi). 
\end{align}
Here, the nonlinearity $\PORO=\PORO(\phi)$  is related to the
stress-free \REFF{isotropic shrinkage}
of the specimen at given porosity $\phi$. 
\REFF{This corresponds to an expansion of volume $1/\PORO^d(\phi)$-times
in the stress-free state; note that 
$\det F=(\det F_{\rm el})(\det\PP)/\PORO^d(\phi)
\sim1/\PORO^d(\phi)$ because $\det\PP\sim1$ and 
also $\det F_{\rm el}\sim 1$ since $E_{\rm el}$ is assumed small
so that $\det F_{\rm el}=\det(F_{\rm el}^\top F_{\rm el})^{1/2}
=\det(2E_{\rm el}+\bbI)^{1/2}\sim(\det\bbI)^{1/2}=1$.
Using $\PORO(\phi)$ in the position of shrinkage rather than expansion in 
\eqref{split} corresponds to the negative sign in \eqref{split-additive}
and gives a simpler formula because $\PORO(\theta)$ occurs in $F_{\rm el}$
instead of $1/\PORO(\theta)$.}
%
%
 Our basic modeling assumption \WE{simplifying the model as far as 
the formulation and the analysis (cf.\ also Sect.\,\ref{sec-concl})}
is that the elastic part of the \REFF{Green-Lagrange} strain is small, 
namely $\REFF{E_{\rm el}=\frac12(F_{\rm el}^\top F_{\rm el}^{}-\mathbb I)\sim0}$, 
and, correspondingly, large deformations are accommodated by the inelastic 
term. \REFF{Yet, the large rotations are naturally allowed, 
so we do not assume directly $\nabla y\sim\PP S$ which might be too
restrictive in some geophysically relevant situations.}

 In addition to the already mentioned nonconvex $\gamma$-term, the 
 geometrically nonlinear setting of \eqref{split}   induces  additional 
nonconvexity of $\FM$.  
On the other hand, note that  $\FM $   is strongly convex
in terms of the water content $\zeta$.  This  makes  the model
amenable to a mathematical discussion even without considering
nonlocal contributions (gradient terms) for $\zeta$, which   would
lead to \WE{the} Cahn-Hilliard  dynamics. This feature will be used later
in order to deduce the   strong
convergence of the gradient of the chemical potential (i.e.\ of the
pore pressure).   

The multiplicative decomposition \eqref{split} allows to express
the free energy in terms of the total strain 
tensor and inelastic/ductile strains  via the substitution 
$F_{\rm el}=F\PP^{-1}S^{-1}(\phi) \WE{= \PORO(\phi)F\PP^{-1}}$. 
In addition, the mechanical stored energy 
will be augmented by gradient terms and a thermal contribution  $\FT$  
(considered for simplicity  to depend solely on temperature, i.e., thermal 
expansion which is not a 
\REF{dominant} effect in geophysical models is here neglected). By integrating 
\REF{over} the reference configuration  $\Omega$ with $F=\nabla y$, the 
{\it \REFF{total} free energy} of the body is expressed by   
\begin{align}
&\Psi(\nabla y,\PP,\alpha,\phi,\zeta,\theta)
=\PsiM(\nabla y,\PP,\alpha,\phi,\zeta)+\PsiT(\theta)
\label{free-energy+}
\\\nonumber
&\ \ \ \text{ with }\ \PsiT(\theta)=\int_\Omega\!\FT(\theta)\,\d x
\\\nonumber
&\ \ \ \text{ and }\ \ \PsiM(\nabla y,\PP,\alpha,\phi,\zeta)
=\int_\Omega\FM(\PORO(\phi)\nabla y\,\WE{\PP^{-1}}\!\WE{,\PP},\alpha,\phi,\zeta)
+\frac12\kappa_0\big|\nabla^2y\big|^2
\\[-.3em]&
\hspace{3em}
+\frac1q\kappa_1|\nabla \PP|^q
+\frac12\kappa_2|\nabla\alpha|^2+\frac12\kappa_3|\nabla\phi|^2
+\frac12\kappa_4|\nabla\zeta|^2
+\delta_{[0,1]}^{}(\zeta)\,\d x,
\nonumber
\end{align}
 where additional gradient terms are considered. In particular,
the $\kappa_0$-term qualifies the material as {\it 2nd-grade
  nonsimple}, also called {\it multipolar} or {\it complex}, see the
seminal  \cite{Toup62EMCS}  and
\cite{FriGur06TBBC,MinEsh68FSTL,Podi02CISM,PoGiVi10HHCS,Silh88PTNB,TriAif86GALD}.
The exponent $q$ in the $\kappa_1$-term  is given and fixed to be
larger than $d$,  which eases  some points of the analysis. Note however that the
choice $\kappa_1(\nabla \PP) |\nabla \PP|^2$ for some $ \kappa_1 (\nabla
\PP)\sim 1+ |\nabla \PP|^{q-2}$ could be considered as well. The
gradient terms in $\alpha$, $\phi$, and $\zeta$ are intended to
describe nonlocal effects  and effectively encode the emergence of
length scales associated with damage, porosity, and water-content
profiles, respectively. 

 The symbol 
$\delta_{[0,1]}(\cdot)$ denotes   the indicator function $\delta_{[0,1]}(\zeta)=0$ 
if $0\le\zeta\le1$  and   $\delta_{[0,1]}(\zeta)=\infty$ 
elsewhere.  
This indicator function encodes  the constraint $0\le\zeta\le1$.
The {\it frame-} \WE{and {\it plastic-}}indifference of the  mechanical stored 
energy \eqref{frame-indif} translates in terms of $\Psi$ as
\begin{align*}
\forall R_1,\WE{R_2}\in{\rm SO}(d):\ \ \Psi({R_1}\nabla y, \WE{R_2} \PP,\alpha,\phi,\zeta,\theta)=\Psi(\nabla y,\PP,\alpha,\phi,\zeta,\theta).
\end{align*}
In particular let us note that the gradient terms 
are  frame-indifferent  as well.

%
 
The partial functional derivatives of $\Psi$
 give origin to  corresponding 
driving forces.  We use the symbol $\partial_w$ to indicate both
differentiation with respect to the variable $w$ of a smooth function
or functional 
or subdifferentiation of a convex function or functional. The 
second  Piola-Kirchhoff 
stress $\varSigma_{\rm el}$,  here  augmented by a contribution 
arising from the  gradient $\kappa_0$-term, is defined   as 
\begin{subequations}\label{stresses}\begin{align}
&\varSigma_{\rm el}=
  \partial_{\nabla y}   \Psi  =
  \PORO(\phi) \partial_{F_{\rm el}}^{} \FM  (\PORO(\phi)\nabla
 y\WE{\PP^{-1}}\!,\alpha,\phi,\zeta) \WE{\PP^{-T}}
\label{elastic-stresses}
-\kappa_0{\rm div}\nabla^2y.
\ \ \ \ 
\intertext{
Furthermore, 
 the
driving stress for the plastification, again involving   a contribution 
arising from the gradient $\kappa_1$-term, reads}
&\label{def-of-Sin}
\varSigma_{\rm in}=\partial_{\PP}^{}\Psi=
\PORO(\phi) \nabla y^\top  \partial_{F_{\rm el}}\FM
(\PORO(\phi)\nabla y\WE{\PP^{-1}}\!,\alpha,\phi,\zeta): \partial_\PP
\WE{\PP^{-1}}  
\\\nonumber
&\hspace{14em}
\WE{+\varpi'(\det\PP)\WE{\PP^{-T}}}-{\rm div}(\kappa_1|\nabla\PP|^{q-2}\nabla\PP)
.
\intertext{Here and in the following  we use the (standard) notation ``$\,\cdot\,$'' and ``$\,:\,$'' 
and ``$\,\Vdots\,$''  for the  contraction 
product of vectors, 2nd-order, and 3rd tensors, respectively. As  $\partial_\PP
\WE{\PP^{-1}} $ is a 4th-order tensor, the product 
$\PORO(\phi) \nabla y^\top\partial_{F_{\rm el}}\FM
(\PORO(\phi)\nabla y\WE{\PP^{-1}}\!,\alpha,\phi,\zeta): \partial_\PP
\WE{\PP^{-1}}$   turns out
to be a 2nd-order tensor, as expected.  The 
thermodynamical driving {\it pressure} for damage  is }
\label{p-dam}
&p_{\rm age}=
 \partial_\alpha \Psi=
 \partial_\alpha\FM   (\PORO(\phi)\nabla y\WE{\PP^{-1}}\!,\alpha,\phi,\zeta)-\kappa_2\Delta\alpha,
\intertext{and the driving  force  for porosity-evolution 
(a so-called effective pressure)  is }
&p_{\rm eff}= \partial_\phi \Psi =
 {\rm tr} \big( \PORO'(\phi)\WE{\PP^{-T}}\nabla y^\top \partial_{F_{\rm el}}^{}\FM
(\PORO(\phi)\nabla y\WE{\PP^{-1}}\!,\alpha,\phi,\zeta) \big) 
\\&\hspace{5.8em}
+  \partial_\phi \FM   (\PORO(\phi)\nabla y\WE{\PP^{-1}}\!,\alpha,\phi,\zeta)
-\kappa_3\Delta\phi.\nonumber
\intertext{Analogously, we also identify the pore pressure 
$p_{\rm por}$ as}\label{p-por}
&
p_{\rm por}\in\partial_\zeta\Psi=
 \partial_\zeta \FM  (\PORO(\phi)\nabla y\WE{\PP^{-1}}\!,\alpha,\phi,\zeta)
-\kappa_4\Delta\zeta+\mathfrak{N}_{[0,1]}^{}(\zeta)
\\&\hspace{4.8em}\nonumber
=m(\alpha,\phi)\frac{\zeta{-}\phi{-}\beta I_1}{\sqrt[4]{1{+}\epsilon I_2}}
-\kappa_4\Delta\zeta+\mathfrak{N}_{[0,1]}^{}(\zeta),
\end{align}\end{subequations}
where $\mathfrak{N}_{[0,1]}^{}(\zeta)$ is the normal cone  to the
interval $[0,1]$  at $\zeta$.   All variations of $\Psi$ above are taken with
respect to the corresponding $L^2$ topologies.

\subsection{Thermodynamical system}\label{sec:thermo}
The 
entropy $\eta$, the heat capacity $c_{\rm v}$, and the thermal part
$\vartheta$  of the internal energy \REFF{(per unit reference volume)} 
are classically recovered as 
\begin{align}&\eta=-\psi_\theta'=-\FT'(\theta),\ \ \ \ 
c_{\rm v}=-\theta\psi_{\theta\theta}''=-\theta\FT''(\theta),\ \ 
\text{ and }\ \ \vartheta=\FT(\theta)-\theta\FT'(\theta).
\label{p-eta-cv}
\end{align}
Note  in particular that 
$\DT\vartheta=\FT'(\theta)\DT\theta-\DT\theta\FT'(\theta)-\theta\FT''(\theta)\DT\theta=c_{\rm v}(\theta)\DT\theta$. The {\it entropy equation} reads as
\begin{align}\label{ent-eq}
\theta\DT\eta+{\rm div}\,j=\text{dissipation rate}.
\end{align}
We assume the heat flux $j$  to be  governed by the {\it Fourier} law
$j=-\mathscr{K}\nabla\theta$ where is $\mathscr{K}$ the heat-conductivity 
tensor. Substituting $\eta$ from \eqref{p-eta-cv} into \eqref{ent-eq}, we 
arrive \REF{at} the heat-transfer equation
\begin{align*}
c_{\rm v}(\theta)\DT\theta-{\rm div}(\mathscr{K}\nabla\theta)=\text{dissipation rate}.
\end{align*}
Note that,  $c_{\rm v}$ depends of temperature only as so does
$\FT$. 

The 
\WE{water-content} gradient (i.e.\ the $\kappa_4$-term) describes capillarity 
effects and it is standardly referred to as the Cahn-Hilliard model 
\cite{CahHil58FEUS}, $\por$ is the chemical potential and \eqref{p-por} 
corresponds to diffusion governed by the (generalized) Fick-Darcy law. Note 
that this simplified model for a  stiff poroelastic matrix interacting with
a moving fluid is largely accepted in  the geophysical context 
\cite{Raja07HAMF}. In order to cope with the direct coupling of $\zeta$ with 
$\theta$ in \eqref{ansatz}, we consider some viscous dynamics,
following the original Gurtin's ideas \cite{Gurt96GGLC}, cf.\ also 
\cite{BoDrSc03GSGC,EllGar96CHED,GriNov99VCHE,HeKrRoRo??TDPF,Ross05TCGV}.
This involves some relaxation time $\tauR>0$ and a contribution 
$\tauR\DT\zeta^2$ to the dissipation rate.

\REF{In summary}, the  model consists of a system of semilinear equations of 
the form
\begin{subequations}\label{system}
\begin{align}\label{momentum-eq}
&\varrho\DDT y={\rm div}
\,
\varSigma_{\rm el}
+
g(y),\!\!&&\text{({\sf momentum equilibrium})}
\\&\label{flow-rule-pi}
\partial_\PR^{}\mathfrak{R}\big(\alpha,\phi,\theta;\DT\PP\PP^{-1}\big)
+
\,\varSigma_{\rm in}\PP^\top
=0,
&&\text{({\sf flow rule for inelastic strain})}
\\[-.1em]&
\partial_{(\DT\alpha,\DT\phi)}\mathfrak{D}\Big(\!\alpha,\phi,\theta;\Big(\!\begin{array}{c}\DT\alpha\\[-.3em]\DT\phi\end{array}\!\Big)\Big)
+\Big(\!\!\begin{array}{c}
p_{\rm age}\!
\\[-.1em]
p_{\rm eff}\,
\end{array}\!\!\Big)
=\Big(\!\!\begin{array}{c}0\!\!
\\[-.1em]
0\!\!\end{array}\Big)
,\!\!&&\text{({\sf flow rule for damage/porosity})}
\label{flow-rule}
\\[-.3em]\label{system-Darcy}
&\DT\zeta=\EE{\rm div}\big(\mathscr{M}(\PP,\alpha,\phi)
\nabla \por^{}\big)
,
&&\text{({\sf water-transport equation})}
\\[-.3em]\label{system-Cahn-Hilliard}
&\por=
p_{\rm por}+\tauR\DT\zeta,
&&\text{({\sf equation for chemical potential})}
\\
&c_{\rm v}(\theta)\DT\theta
={\rm div}\big(\mathscr{K}(\PP,\phi,\zeta,\theta)\nabla\theta\big)+r
\!\!
&&\text{({\sf heat-transfer equation})}
\label{system-heat}
\\\label{system-diss}
&\qquad\text{with }\ r=r\big(\PP,\alpha,\phi,\theta;\DT\PP,\DT\alpha,\DT\phi,\DT\zeta,\nabla \por^{}\big)
\\[-.4em]\nonumber
&\hspace{5em}=
\partial_\PR^{}\mathfrak{R}\big(\alpha,\phi,\theta;\DT\PP\PP^{-1}){:}(\DT\PP\PP^{-1})
+
\partial_{(\DT\alpha,\DT\phi)}\mathfrak{D}\Big(\alpha,\phi,\theta;\Big(\!\begin{array}{c}\DT\alpha\\[-.1em]\DT\phi\end{array}\!\Big)\Big)
{\cdot}\Big(\!\begin{array}{c}\DT\alpha\\[-.1em]\DT\phi\end{array}\!\Big)
\hspace{-15em}\\[-.4em]&\hspace{5em}
+\tauR\DT\zeta^2
+\mathscr{M}(\PP,\alpha,\phi)\nabla \por^{}{\cdot}\nabla \por^{},
\hspace{-0em}
&&
\text{({\sf heat-production rate})} 
\nonumber\end{align}
\end{subequations}
where $\mathfrak{R}=\mathfrak{R}\big(\alpha,\phi,\theta;\PR)$ is the 
pseudopotential related to dissipative forces of visco-plastic origin ($\PR$ is 
the placeholder for the 
\REFF{rate of plastic strain}
$\DT\PP\PP^{-1}$),   
and 
$\mathfrak{D}(\alpha,\phi,\theta;\cdot)$ 
is the dissipation potential  related to   damage and porosity evolution.

The {\it effective transport matrices}
$\mathscr{K}$ and $\mathscr{M}$ are to be related with the 
hydraulic-conductivity and the heat-conductivity symmetric tensors 
$\Frakm=\Frakm(\alpha,\phi)$ and $\Frakk=\Frakk(\zeta,\theta)$
which are given material properties. 
 The need for such effective quantities stems form the fact that 
driving forces are to be considered Eulerian in nature, so that a
pull-back to the reference configuration is imperative. A first choice
would then be  
\begin{subequations}\label{M-K-pull-back}
\begin{align}\label{M-pull-back}
\mathscr{M}(F,\alpha,\phi)&=(\det F)F^{-\top}\Frakm(\alpha,\phi) F^{-1}
=(\Cof F)\Frakm(\alpha,\phi)F^{-1}
=\frac{1}{{\det F}}(\Cof F)\Frakm(\alpha,\phi)\Cof F^\top
\\\label{K-pull-back}
\mathscr{K}(F,\zeta,\theta)
&=\frac{1}{{\det F}}{(\Cof F)\Frakk(\zeta,\theta)\Cof F^\top} .
\end{align}\end{subequations}
These are just  usual pull-back transformations of 2nd-order covariant 
tensors\REFF{, cf.\ also Remark~\ref{rem-Lag-vs-Euler} below for some more 
discussion}.  
%
\WE{Let us} recall that our modeling assumption is that  
\WE{$E_{\rm el}$}
is small so that 
\WE{ $F^\top F\sim S^\top(\phi)\PP^\top\PP S(\phi)=\PP^\top\PP/\PORO^2(\phi)$.}
Thus, by replacing $F$ by $\PP S(\phi)=\PP/\PORO(\phi)$ 
\WE{(see Remark~\ref{rem-isotropic}
below for some more discussion) 
and using the specific homogeneity of the determinant and the cofactor}, 
relations \eqref{M-K-pull-back} can be rewritten as 
\begin{subequations}\label{M-K-pull-back+}
\begin{align}\label{M-pull-back+}
&\mathscr{M}(\PP,\alpha,\phi)\ =\ 
\PORO(\phi)^{2-d}\frac{\WE{\PP^{-T}}\Frakm(\alpha,\phi)\WE{\PP^{-1}}}{\WE{\det\PP}},
\\&\mathscr{K}(\PP,\zeta,\theta)\ =\ 
\PORO(\phi)^{2-d}\frac{\WE{\PP^{-T}}\Frakk(\zeta,\theta)\WE{\PP^{-1}}}{\WE{\det\PP}}.
\end{align}\end{subequations}
These expressions bear the advantage of being independent of $(\nabla y)^{-1}$, 
which turns out useful in relation with estimation and passage to the limit
arguments, cf.\ \cite{KruRou18MMCM,RouTom??TMPE}.

Note that  the right-hand side of  \eqref{momentum-eq} features
the pull-back $g{\circ}y$ of the {\it actual} gravity force
$g:\R^d\to\R^d$. This allows us to  consider a spatially inhomogoneous
gravity, a generality which could turn out to be sensible at geophysical
scales. 





The plastic flow rule \eqref{flow-rule-pi}  complies with the
so-called {\it plastic-indifference} requirement. Indeed, the evolution is 
\REFF{insensitive} to prior plastic deformations, for the stored energy 
and the dissipation potential  
$$
\widehatFM(F,\PP,\alpha,\phi,\zeta)
:=\FM(F_{\rm el},\alpha,\phi,\zeta)
\ \ \ \ \text{ and }\ \ \ \ \widehat{\mathfrak{R}}(\PP,\alpha,\phi,\theta;
\DT\PP) = \mathfrak{R} (\alpha,\phi,\theta;\DT \PP \PP^{-1})
$$
\WE{respect}
the 
invariances 
$\widehatFM(F\widetilde\PP,\PP\widetilde\PP,\alpha,\phi,\zeta)=
\widehatFM(F,\PP,\alpha,\phi,\zeta)$
and $\widehat{\mathfrak{R}}(\PP\widetilde\PP,\alpha,\phi,\theta;\DT\PP\widetilde\PP) 
=\widehat{\mathfrak{R}}(\PP,\alpha,\phi,\theta;\DT\PP)$
for any $\widetilde\PP\in{\rm SO}(d)$ \REFF{meaning the mentioned  prior plastic deformation},
cf.\ e.g.\ \cite{Miel02FELG,MiRoSa??GERV,RouSte??TELS}.
 In particular,  we can equivalently test the flow rule
\eqref{flow-rule-pi} by $\DT\PP\PP^{-1}$ or rewrite it  as  
\begin{align}\label{flow-rule-pi+}
\partial_\PR^{}\mathfrak{R}\big(\alpha,\phi,\theta;\DT\PP\PP^{-1}\big)\PP^{-\top}
+\,\varSigma_{\rm in}
= \partial_{\DT\PP}\widehat{\mathfrak{R}}\big(\PP,\alpha,\phi,\theta;\DT\PP
\big) +\,\varSigma_{\rm in}  =0
\end{align}
and test it on by $\DT\PP$ obtaining 
\begin{align}\label{test-of-flow-rule}
\partial_\PR^{}\mathfrak{R}\big(\alpha,\phi,\theta;\DT\PP\PP^{-1}\big)
{:}\DT\PP\PP^{-1}=-\varSigma_{\rm in}\PP^\top\!{:}\DT\PP\PP^{-1}
=-\varSigma_{\rm in}\PP^\top\PP^{-\top}\!{:}\DT\PP=-\varSigma_{\rm in}{:}\DT\PP,
\end{align}
where we used also the algebra $AB{:}C=A{:}CB^\top$.

%
 %

 The system \eqref{system} has to be complemented by 
 suitable boundary and initial conditions.  As for the former
we prescribe 
\begin{subequations}\label{BC}
\begin{align}\label{BC-1}
&\varSigma_{\rm el}\nu-
\divS\big(\kappa_0\nabla^2y
\big)
+Ny=Ny_\flat(t),\ \ \ \ \ \ \ \kappa_0\nabla^2y{:}(\nu\otimes\nu)=0,
\\&
\mathscr M(\PP,\alpha,\phi)\nabla \por^{}{\cdot}\nu+M\por=M\mu_\flat(t),\ \ \ \ \ \ 
\mathscr K(\PP,\zeta,\theta)\nabla\theta{\cdot}\nu+K\theta=K\theta_\flat(t),\ \ \ \ 
\label{BC-2}
\\& \PP =\mathbb{I} \ \ \WE{\text{on} \  \Gamma_{\rm Dir} \subset \partial \Omega}, \ \ \ \ \ \ 
\kappa_2\nabla\alpha{\cdot}\nu=0,\ \ \ \ \ \ 
\kappa_3\nabla\phi{\cdot}\nu=0,\ \ \ \ \ \ 
\kappa_4\nabla\zeta{\cdot}\nu=0.
\label{BC-3}\end{align}\end{subequations}
Relations \eqref{BC-1} correspond to a Robin-type
  mechanical condition. In particular,  $\nu$ is the external normal at $\partial \Omega$, ${\rm div}_{_{\rm S}}$ denotes the surface divergence defined as a
trace of the surface gradient (which is a projection of the gradient on the 
tangent space through the projector $\bbI-\nu\otimes\nu$),
and $N$ is the elastic modulus of idealized {\it boundary} springs
(as often used in numerical simulations in geophysical models, cf.
e.g.\ \cite{LyaBeZ08SREA,LyaBeZ09EGMP}).
Similarly we prescribe  in \eqref{BC-2}  Robin-type boundary condition for the water flow
 where $M$ is  a boundary permeability and $\mu_\flat$  is the
 water chemical potential
in the external environment, and for temperature,  where $K$ is
 the boundary 
heat-transfer coefficient and $\theta_\flat$  is   the external temperature. 
Moreover, the $\kappa$-gradient  terms
require corresponding boundary conditions.  We assume $\PP$ to be
    the identity \WE{on an open subset $\Gamma_{\rm Dir} $ of
      $\partial \Omega$ having a
      positive surface measure}. This \WE{boundary condition} is chosen here for the sake of
    simplicity and could  be weakened by imposing \WE{the condition to
      be non-homogeneous $\PP= \PP_{\rm
        Dir}(t)$ and possibly time-dependent on $\Gamma_{\rm Dir}$} or even by a Neumann condition, this
    last requiring however a  more  delicate estimation argument\REFF{, cf.\ also Sect.~\ref{sec-concl}}.
All other boundary conditions are assumed to be of homogeneous Neumann-type in 
\eqref{BC-3}. Eventually, initial conditions read 
\begin{align}\label{IC}
&y(0)=y_0,\ \ \ \DT y(0)=v_0,\ \ \ \PP(0)=\PP_0,\ \ \ \alpha(0)=\alpha_0,\ \ \ 
\phi(0)=\phi_0,\ \ \ \theta(0)=\theta_0.
\end{align}

We shall comment \REF{on} the thermodynamic consistency of the full
model  \eqref{system}--\eqref{BC}--\eqref{IC}. This can be checked  
by testing the particular equations/inclusions in (\ref{system}a-e)
successively by $\DT y$, $\DT\PP\PP^{-1}$, $\DT\alpha$, $\DT\phi$, 
$\por$, and $\DT\zeta$.  By adding up these contributions and using
\eqref{test-of-flow-rule} we  obtain 
the mechanical energy balance 
\begin{align}\label{energy-conserv}&\!\!
\frac{\d}{\d t}\bigg(\int_\Omega\frac\varrho2|\DT y|^2
\,\d x
+\PsiM(\nabla y,\PP,\alpha,\phi,\zeta)
+\int_\Gamma \frac12N|y|^2\,\d S\bigg)
\\[-.3em]&\qquad
+\int_\Omega r\big(\PP,\alpha,\phi,\theta;\DT\PP,\DT\alpha,\DT\phi,\DT\zeta,\nabla \por^{}\big)\,\d x
+\int_\Gamma M\por^2\,\d S 
=\int_\Omega 
g(y){\cdot}\DT y\,\d x
+\int_\Gamma 
Ny_\flat{\cdot}\DT y+
M\mu_\flat\por\,\d S.
\nonumber\end{align}
Let us point out that, as usual,  this energy balance  can be
rigorously justified in case of smooth solutions only. Existence of
smooth solutions is however not guaranteed for $y$ lacks time regularity due
to the possible occurrence of shock-waves in the nonlinear hyperbolic system
(\ref{system}a).  
Also the power of the external mechanical load in \eqref{energy-conserv}, 
i.e.\ $y_\flat
{\cdot}\DT y$, is not well defined if $\nabla\DT y$ 
is not controlled.  We will hence  
 treat this term \REF{as a weak derivative in time}, \WE{using} 
the by-part integration in time, cf.\ \eqref{by-part-boundary}.

By adding to \eqref{energy-conserv} the space integral of the heat equation 
\eqref{system-heat} we obtain the total energy balance
\begin{align}\label{energy-conserv+}
&\hspace{-1em}\frac{\d}{\d t}\bigg(
\!\!\!\linesunder{\int_\Omega\frac\varrho2|\DT y|^2
+\vartheta\,\d x}{kinetic and heat}{energies in the bulk}\!\!\!
+\!\!\!\!\linesunder{\PsiM(\nabla y,\PP,\alpha,\phi,\zeta)_{_{_{_{_{_{_{}}}}}}}\!}{mechanical energy}{in the bulk}\!\!\!\!
+\!\!\!\!\!\!\!\!\!\linesunder{\int_\Gamma \frac12N|y|^2\d S_{_{_{_{_{_{_{}}}}}}}\!}{mechanical energy}{on the boundary}\!\!\!\!\!\!\bigg)
\\[-.2em]
&\hspace{-1em}=\!\!\!\!\!\linesunder{\int_\Omega 
g(y)\cdot\DT y\,\d x_{_{_{_{_{_{_{}}}}}}}\!}{power of}{gravity}\!\!\!\!\!
+\!\!\!\!\linesunder{
\int_\Gamma Ny_\flat\cdot\DT y\,\d S}{power of surface}{load on $\Gamma$}\!\!\!\!
+\!\!\!\!\linesunder{\int_\Gamma M(\por^{}{-}\mu_\flat)\,\d S}{flux of energy due}{to water flow thru $\Gamma$}\!\!\!\!
+\!\!\!\!\linesunder{\int_\Gamma K(\theta{-}\theta_\flat)\,\d S}{heat flux}{thru $\Gamma$}\!\!\!.\!\!\!\!
\!
\nonumber\end{align}

From \eqref{ent-eq} with the heat flux $j=-\mathscr{K}\nabla\theta$ and with 
the dissipation rate (=heat production rate) $r$
from \eqref{system-diss}, one can read the {\it entropy imbalance}
\begin{align}\label{ent-imbalance}
\frac{\d}{\d t}\int_\Omega\!\eta\,\d x
=\int_\Omega\!\frac{r+{\rm div}(\mathscr{K}\nabla\theta)}{\theta}\,\d x
=\int_\Omega\,\frac{r}{\theta}-\mathscr{K}\nabla\theta{\cdot}\nabla\frac1\theta\,\d x
+\int_\Gamma\frac{\mathscr{K}\nabla\theta}\theta{\cdot}\nu\,\d S
\\\qquad\quad=\int_\Omega\!\!\!\!\linesunder{\;\frac{r}{\theta}+\frac{\mathscr{K}\nabla\theta{\cdot}\nabla\theta}{\theta^2}}{entropy production}{rate in the bulk $\Omega$}\!\!\!\!\!\d x\ 
+\int_\Gamma\!\!\!\!\!\!\!\!\!\!\!\!\linesunder{\;\frac{K(\theta_\flat{-}\theta)}\theta}{entropy flux through}{the boundary $\Gamma$}\!\!\!\!\!\!\!\!\!\!\d S
\ge\int_\Gamma K\Big(\frac{\theta_\flat}\theta-1\Big)\theta\,\d S,
\nonumber
\end{align}
provided $\theta>0$ and $\mathscr{K}$ is positive semidefinite. In particular,
if the system is thermally isolated, i.e.\ $K=0$,
\eqref{ent-imbalance}  states that 
the overall entropy is nondecreasing in time.
This shows consistency with the 2nd law of thermodynamics.

Eventually, the 3rd thermodynamical law (i.e.\ non-negativity of
temperature),  holds  as  soon as   the initial/boundary conditions
are  suitably  qualified so that $r \geq 0$.  In fact,   we do not consider any 
adiabatic-type effects, which might cause cooling.  

 We conclude the presentation of the model with a number of
remarks and comments on  modeling choices and  possible extensions.  

\begin{remark}[{\sl Dissipation potential $\mathfrak{D}$}]\label{rem-D-potent}
\upshape
A specific form of the flow rule for the damage/poro\-sity 
\eqref{flow-rule}  can be chosen as  
\begin{subequations}\label{flow-damage+poro}\begin{align}\label{flow-damage+}
&\DT\alpha=\begin{cases}
c_0\big(\gamma_\mathrm{r}I_2(\xi{-}\xi_0)+\kappa_2\Delta\alpha\big)
&\text{if }
\gamma_\mathrm{r}I_2(\xi{-}\xi_0)+\kappa_2\Delta\alpha\ge0,
\\
c_1\mathrm{e}^{\alpha/c_2}\mathrm{e}^{b(\phi_0-\phi)}
\big(\gamma_\mathrm{r}I_2(\xi{-}\xi_0)+\kappa_2\Delta\alpha\big)
&\text{otherwise,}\end{cases}
\\&\label{flow-poro}\DT\phi=d(\phi)\big|p_{\rm eff}{+}\kappa_3\Delta\phi\big|^n
\big(p_{\rm eff}{+}\kappa_3\Delta\phi\big)
\end{align}\end{subequations}
 where  the so-called strain invariants ratio
$\xi:=I_1/\sqrt{I_2}$  is used,  $c_0$, $c_1$, and $c_2$ 
denote positive parameters,  and $\GG_\mathrm{r}$ and 
$\gamma_\mathrm{r}$  are  from \eqref{ansatz}. 
In particular, 
  $\xi_0$ is a   critical strain invariant ratio  thresholding
  damaging from  healing. This  flow rule leads to a
  dissipation 
potential  which is $(2,\frac{n+2}{n+1})$-homogeneous
in terms of the rates $(\DT\alpha,\DT\phi)$,  namely 
\begin{align}
\mathfrak{D}(\alpha,\phi;\DT\alpha,\DT\phi)
&=\displaystyle\frac{n{+}1}{n{+}2}d(\phi)^{-n-1}|\DT\phi|^{(n+2)/(n+1)}
+
\begin{cases}\displaystyle\frac1{2c_0}\DT\alpha^2
&\text{if }\DT\alpha\ge0,\\
\displaystyle\frac1{2c_1}  \mathrm{e}^{-\alpha/c_2
  -b(\phi_0-\phi)}  \DT\alpha^2&\text{if }\DT\alpha\le0.
\end{cases}\nonumber
\end{align}
 In the case $\DT \phi =0$,   the flow rule \eqref{flow-damage+}
has been used in \cite{LyaBeZ09EGMP}  (with $\kappa_2=0$) and
\cite[Formula~(25)]{LyaBeZ14CDBF}. Note that  
$\DT \alpha \mapsto \mathfrak{D}(\alpha,\phi_0;\DT \alpha, 0)$   is convex, degree-2 homogeneous, and 
differentiable at $\DT\alpha=0$.  This  suggests to call $\alpha$ 
{\it aging} (as indeed mostly used in the geophysical  literature) 
rather than damage.  In addition, parameter dependencies on
temperature, i.e. $\mathfrak{D}=\mathfrak{D}
(\alpha,\phi,\theta;\DT\alpha,\DT\phi)$ can also be considered, see
below. 
 By including the evolution of porosity as well,   a
non-dissipative  antisymmetric   coupling 
between  the two  flow rules in 
\eqref{flow-damage+poro} has been considered in
\cite{HaLyAg04EDPP,HaLyAg05DRDC,LyaHam07DEFF}. Such dissipation does
not   admit a  potential and does not control 
$\DT\phi$.  In particular, standard existence theories are not
applicable. A symmetric version of this coupling  has also been
proposed  for a similar model
with a granular-phase field instead of the porosity
\cite{LyaBeZ14CDBF}.  This would indeed admit
a potential and be amenable to variational solvability. 
\end{remark}

\REFF{
\begin{remark}[{\sl The transport tensors $\mathbb M$ and $\mathbb K$}]
\label{rem-Lag-vs-Euler}
\upshape
The Darcy and Fourier laws in \eqref{M-K-pull-back} are 
in the actual deformed configuration, and one expects 
to consider the transport coefficients $\mathbb M_\text{\sc a}$ and 
$\mathbb K_\text{\sc a}$ as a function of $y\in y(\Omega)$, while
the ``effective'' transport tensors $\mathscr{M}$ and $\mathscr{K}$ 
are in the reference Lagrangian coordinates.
In real situations, one must feed the model with transport coefficients
that are known for particular materials at the point $x\in\Omega$.
Then $\mathbb M_\text{\sc a}=\mathbb M_\text{\sc a}(y)$, which should be 
thought actually in the right-hand side of \eqref{M-pull-back}, can be chosen 
as $\mathbb M_\text{\sc a}(y)=\mathbb M(y^{-1}(y(x)))=\mathbb M(x)$.
If $\mathbb M_\text{\sc a}(y)$ depends also on the scalar internal variables 
(i.e.\ aging $\alpha^y$ and porosity $\phi^y$ considered also in 
$y(\Omega)$ rather than $\Omega$),
then this transformation applies similarly, i.e.\ 
using $\alpha^y(y(x))=\alpha(y^{-1}(y(x))=\alpha(x)$ and 
$\phi^y(y(x))=\phi(y^{-1}(y(x))=\phi(x)$, we obtain 
$\mathbb K=\mathbb K(x,\alpha,\phi)$ fully expressed in 
the Lagrangian reference configuration. The same applies to $\mathbb K$ 
in \eqref{K-pull-back}. 
\end{remark}
}

\begin{remark}[{\sl The \WE{isotropic} choice of $\mathbb M$}]
\label{rem-isotropic}
\upshape
The mobility $\mathbb M$ in the Darcy law in 
\REFF{configuration (considered eventually in the reference configuration 
as explained in Remark~\ref{rem-Lag-vs-Euler})}
is often considered  to be  isotropic, namely, $\mathbb
M=\kappa\bbI$ where $\kappa>0$ is the  so-called hydraulic
conductivity or permeability.  This amounts to about
$10^{-12}$m$^2$/(Pa\,s)  \cite{HaLyAg04EDPP,HaLyAg05DRDC} but may also
depend on porosity and/or damage as  
  $\kappa=\kappa(\phi)$ or $\kappa=\kappa(\alpha,\phi)$ 
with various phenomenologies  \cite{LyaHam07DEFF,LyZhSh15VPDM}.
\REFF{
In this isotropic case $\Frakk(\theta)=k(\theta)\mathbb{I}$, 
relation  
\eqref{M-pull-back} can also be written 
by using the right Cauchy-Green tensor $C$ as 
\begin{align*}
\mathscr{K}(F,\theta)=\hat{\mathscr{K}}(C,\theta) =\det C^{1/2} k(\theta)C^{-1}
\quad\text{with }\ \ C=F^\top F, 
\end{align*}
cf.\ \cite[Formula (67)]{DuSoFi10TSMF} or
\cite[Formula (3.19)]{GovSim93CSD2}.
In fact, the effective transport-coefficient tensor is a function of $C$ 
in general anisotropic cases as well, cf.\ \cite[Sect.\,9.1]{KruRou18MMCM}. In 
view of this, we now use our smallness assumption $E_{\rm el}\sim 0$, which 
yields only $F^\top F\sim S^\top(\phi)\PP^\top\PP S(\phi)
=\PP^\top\PP/\PORO^2(\theta)$, 
in order to infer that we can, in fact, substitute $F$ 
with  $\PP/\PORO(\theta)$
into \eqref{M-pull-back} as a good modelling ansatz, even though 
$F-\PP/\PORO(\theta)$ need not be small. Similar consideration 
holds for the heat transfer, too.}
\end{remark}

\section{Existence of weak solutions}
\label{sec-anal}

This section introduces the definition of weak solution to the
problem and brings to the statement of the our main existence result,
namely Theorem \ref{thm}. Let us start by fixing some notation. 

We will use the standard notation $C(\cdot)$ for the space of continuous bounded
functions, $L^p$ for Lebesgue  spaces, and $W^{k,p}$ for Sobolev spaces whose 
$k$-th distributional derivatives are in $L^p$. Moreover, we will use the 
abbreviation $H^k=W^{k,2}$ and,  for all $p\geq 1$, we let the conjugate
exponent $p'=p/(p{-}1)$  (with $p'=\infty$ if $p=1$), and 
$p^*$ for the Sobolev exponent $p^*=pd/(d{-}p)$ for $p<d$,
$p^*<\infty$ for $p=d$, and $p^*=\infty$ for $p>d$.
Thus, $W^{1,p}(\Omega)\subset L^{p^*}\!(\Omega)$ or 
$L^{{p^*}'}\!(\Omega)\subset W^{1,p}(\Omega)^*$=\,the dual to $W^{1,p}(\Omega)$. 
In the vectorial case, we will write $L^p(\Omega;\R^d)\cong L^p(\Omega)^d$ 
and $W^{1,p}(\Omega;\R^d)\cong W^{1,p}(\Omega)^d$. 


Given the fixed time interval $I=[0,T]$, we denote by $L^p(I;X)$ the 
standard Bochner space of Bochner-measurable mappings $I\to X$, where
$X$ is a Banach space. Moreover,  $W^{k,p}(I;X)$ denotes the Banach space of 
mappings in  $L^p(I;X)$ whose $k$-th distributional derivative in time is 
also in $L^p(I;X)$.

Let us list here the  the assumptions on the data  which
are used in the following: 
\begin{subequations}\label{ass}
\begin{align}
&y_0\!\in\! H^2
(\Omega;\R^d),\ \ 
v_0\!\in\! L^2(\Omega;\R^d),\ \ 
\PP_0\!\in\! W^{1,q}
(\Omega;\R^{d\times d}),
\ \ \ q>d,
\\&\alpha_0\!\in\! H^1
(\Omega),\ \ \, 
\phi_0\!\in\! H^1
(\Omega),\ \ \, 
 \zeta_0 \in H^1(\Omega),\ \ \, 
\theta_0\!\in\! L^1(\Omega),\ \ \, 0\le\zeta_0\le1,\ \ \theta_0\ge0,
\label{ass-IC}
\\&\label{ass-load}
g\in C(\R^d;\R^d),\ \ \ 
\mu_\flat\in L^2(\Sigma),\ \  \theta_\flat\in L^1(\Sigma),\ \ \theta_\flat\ge0,
\\&\lambda,\GG,\gamma,m{:}\R^2\to\R^+,\ \chi,\PORO{:}[0,1]\to\R^+\ 
\text{ Lipschitz cont.},\ 
\lambda\ge-\frac2d\GG,\ \ \GG>0,
\label{ass-lambda...m}
\\&
\partial_\alpha\lambda=\partial_\alpha\GG=\partial_\alpha\gamma=\partial_\alpha m=\partial_\alpha\chi=0\ \ \text{
  if $\alpha\not\in[0,1]$},
\\&
\partial_\phi\lambda =\partial_\phi\GG =\partial_\phi\gamma
=\partial_\phi m =\partial_\phi \PORO=0\;\ \ \text{ if
  $\phi\not\in[0,1]$},
\\\label{ass-M-K}&\mathbb M,\,\mathbb K
:\R{\times}\R\to\R^{d\times d}\ \ 
\text{continuous, bounded, and}
\\&\nonumber
\qquad\qquad\qquad\qquad\qquad\text{positive-definite 
(uniformly in their arguments),}
\\&\quad\label{ass-varpi}
\WE{\varpi(a)\ge\begin{cases}\epsilon/a^q&\text{if }a>0,
\\+\infty&\text{if }a\le0,
\end{cases}
\ \ \ \ \ q\ge\frac{pd}{p-d},\ \ \ p>d,\ \ \ \epsilon>0,}
\\&\label{ass-R-monotone}
\mathfrak{R}(\alpha,\phi,\theta;\cdot):\R^{d\times d}\to\R^+\ \text{ and }\
\mathfrak{D}(\alpha,\phi,\theta;\cdot,\cdot):\R^2\to\R^+\text{ convex, 
and }
\\&\label{ass-D-monotone}
\quad\exists a_\mathfrak{R}^{},a_\mathfrak{D}^{}>0\ 
\forall\alpha,\phi,\theta,
\PR_1,\PR_2,\hat\alpha_1,\hat\phi_1,\hat\alpha_2,\hat\phi_2 
:\ \ \ 
\\&\quad\nonumber
(\partial_{\PR}\mathfrak{R}(\alpha,\phi,\theta;\PR_1)
-\partial_{\PR}\mathfrak{R}(\alpha,\phi,\theta;\PR_2)){:}(\PR_1{-}\PR_2)
\ge a_\mathfrak{R}^{}|\PR_1{-}\PR_2|^2,
\\&\nonumber
\quad
\bigg(\partial_{(\hat\alpha,\hat\phi)}\mathfrak{D}\Big(\alpha,\phi,\theta;\binom{\dot\alpha_1}{\dot\phi_1}\Big)
-\partial_{(\hat\alpha,\hat\phi)}\mathfrak{D}\Big(\alpha,\phi,\theta;\binom{\dot\alpha_2}{\dot\phi_2}\Big)\bigg)
{\cdot}\binom{\hat\alpha_1{-}\hat\alpha_2}{\hat\phi_1{-}\hat\phi_2}\\&\qquad
\hspace*{22em}
\ge a_\mathfrak{D}^{}
\left|\binom{\hat\alpha_1{-}\hat\alpha_2}{\hat\phi_1{-}\hat\phi_2}\right|^2
\nonumber
,
\\\label{ass-R}&\quad a_\mathfrak{R} |\PR|^2\le
\mathfrak{R}(\alpha,\phi,\theta;\PR)\le(1+|\PR|^2)/a_\mathfrak{R} ,
\ \ \ 
\\\label{ass-D}&\quad
 a_\mathfrak{D} |\hat\alpha|^2+ a_\mathfrak{D} |\hat\phi|^2\le
\partial_{(\hat\alpha,\hat\phi)}\mathfrak{D}\Big(\alpha,\phi,\theta;\Big(\!\begin{array}{c}\hat\alpha\\[-.1em]\hat\phi\end{array}\!\Big)\Big)
{\cdot}\Big(\!\begin{array}{c}\hat\alpha\\[-.1em]\hat\phi\end{array}\!\Big)
\le(1+|\hat\alpha|^2+|\hat\phi|^2)/ a_\mathfrak{D} ,
\\\label{ass-cv}&c_{\rm v}: \R_+ \to \R_+  \ \text{ continuous, bounded, 
with positive infimum.}\ 
\end{align}\end{subequations} 


Let us mention that assumptions \eqref{ass-R-monotone} and
\eqref{ass-D-monotone}  make sense also for 
 $\mathfrak{R}(\alpha,\phi,\theta;\cdot)$ and 
$\mathfrak{D}(\alpha,\phi,\theta;\cdot)$ nonsmooth. In this case, their 
subdifferentials are indeed set-valued and thus 
\eqref{ass-R-monotone}-\eqref{ass-D} are to be satisfied for any selection 
from these subdifferentials.  We however stick with $\mathfrak{R}$ and
$\mathfrak{D}$ being smooth both for the \REF{sake} of simplicity and 
in accord with models used in geophysical literature 
\cite{HaLyAg04EDPP,HaLyAg05DRDC,LyaHam07DEFF,LyaBeZ08SREA,LyaBeZ09EGMP,LyaBeZ14CDBF}, cf.\ \cite{RouSte??TELS} for details about the treatment of 
the nonsmooth variant of the viscoplasticity.
\WE{An example for $\varpi$ considered already in \cite{RouSte??TELS}
is 
\begin{align*}
\varpi(\det\PP)=\begin{cases}\displaystyle{
\frac{\delta}{\max(1,\det\PP)^q}
+\frac{(\det\PP-1)^2}{2\delta}}\!\!&\text{ if }\ \det\PP>0,\\
\qquad+\infty&\text{ if }\ \det\PP\ge0\,;\end{cases}
\end{align*}
note that the minimum of this potential is attained just at the set 
${\rm SL}(d)$ of the isochoric plastic strains, and that it complies with 
condition \eqref{ass-varpi} for $q\ge pd/(p-d)$
and also with the plastic-indifference condition \eqref{frame-indif}.} 

We are now in the position of making our notion of weak solution precise. 
 
\begin{definition}[Weak formulation of 
\eqref{system}--\eqref{BC}--\eqref{IC}]
\label{def}
We call 
the seven-tuple 
$(y,\PP,\alpha,\phi,\zeta,\por,\theta)$ with
\begin{align*}
  &y\in L^\infty(I;H^2(\Omega;\R^d))\cap H^1(I;L^2(\Omega;\R^d),\\[-.3em]
  &\PP\in L^\infty(I;W^{1,q}(\Omega;\R^{d\times d}))\cap
  H^1(I;L^2(\Omega;\R^{d\times d})),\ \ \WE{\det\PP>0,\ \ \frac1{\det\PP}\in L^\infty(Q),}\\[-.3em]
&\alpha,\phi,\zeta\in
  L^\infty(I;H^1(\Omega))\cap H^1(I;L^2(\Omega)),\\
&\mu\in
  L^2(I;H^1(\Omega)),\ \
\theta
\in L^1(I;W^{1,1}(\Omega))
\end{align*}
a {weak solution} to the initial-boundary-value problem
\eqref{system}--\eqref{BC}--\eqref{IC}
if the following hold:
\begin{itemize}\item[\rm (i)]
The weak formulation of the momentum balance \eqref{momentum-eq} with 
\eqref{elastic-stresses} 
\end{itemize}
\begin{subequations}\label{weak-form}\begin{align}\label{momentum-weak}
&\int_Q\Big( \partial_{F_{\rm el}}^{}\FM(\PORO(\phi)\nabla y\WE{\PP^{-1}}\!,\alpha,\phi,\zeta){:}(\PORO(\phi)\nabla\tilde y\,\WE{\PP^{-1}})
+\kappa_0\nabla^2y{\Vdots}\nabla^2\tilde y
-\varrho\DT y{\cdot}\DT{\tilde y}\Big)\,\d x\d t
\\[-.4em]\nonumber
&\hspace{3em}
%
%
+\int_\Sigma\! N y{\cdot}\tilde y\,\d S\d t=\int_Q\!
g(y){\cdot}\tilde y\,\d x\d t
+
\int_\Omega\!v_0{\cdot}\tilde y(0)\,\d x+\int_\Sigma\!
N y_\flat{\cdot}\tilde y\,\d S\d t
\end{align}
\begin{itemize}\item[]
holds for any $\tilde y$ smooth with $\tilde y(T)=0$.
\item[\rm (ii)] The weak formulation of the plastic flow rule 
\eqref{flow-rule-pi} 
in the form \eqref{flow-rule-pi+} with \eqref{def-of-Sin}
\end{itemize}
\begin{align}\label{weak-form-P}
&\int_Q\Big(
 \PORO(\phi)\nabla y^\top\partial_{F_{\rm el}}^{}\FM
(\PORO(\phi)\nabla y\,\WE{\PP^{-1}}\!,\alpha,\phi,\zeta){:}
( \partial_\PP \WE{\PP^{-1}}{:}\widetilde\PP)
\WE{\,+\varpi'(\det\PP)\WE{\PP^{-T}}{:}\widetilde\PP}
\\[-.4em]&\hspace{3em}\nonumber
+
\partial_\PR^{}\mathfrak{R}\big(\alpha,\phi,\theta;\DT\PP\PP^{-1}\big)\PP^{-\top}
{:}\widetilde\PP
+
\kappa_1|\nabla\PP|^{q-2}\nabla\PP\Vdots\nabla\widetilde\PP
\Big)\,\d x\d t
=0
\nonumber
\end{align}
\begin{itemize}\item[]
holds for any $\widetilde\PP$ smooth.
\item[\rm (iii)] The weak formulation of the coupled flow rule 
\eqref{flow-rule} for $(\alpha,\phi)$
holds for any $\tilde\alpha$ and $\tilde\phi$ smooth:
\end{itemize}
\begin{align}\label{damage-porosity-weak}
&\int_Q\bigg(\partial_{F_{\rm el}}^{}\FM
(\PORO(\phi)\nabla y\,\WE{\PP^{-1}},\alpha,\phi,\zeta){:}(\sigma'(\phi)\nabla y\WE{\PP^{-1}}\tilde\phi)
\\[-.3em]&\nonumber\hspace{.1em}
+
\partial_{(\alpha,\phi)}\FM(\PORO(\phi)\nabla y\,\WE{\PP^{-1}},\alpha,\phi,\zeta)
\cdot\binom{\tilde\alpha}{\WE{\tilde \phi}} 
+\partial_{(\DT\alpha,\DT\phi)}\mathfrak{D}\big(\alpha,\phi,\theta,\PP;\DT\alpha,\DT\phi\big)\cdot\binom{\WE{\tilde\alpha}}{\tilde \phi}  
\\[-.3em]&\hspace{15em}
+\kappa_2\nabla\alpha{\cdot}\nabla\tilde\alpha
+\kappa_3\nabla\phi{\cdot}\nabla\tilde\phi
\bigg)\,\d x\d t=0.
\nonumber
\end{align}
\begin{itemize}
\item[\rm (iv)] The weak formulation of the Cahn-Hilliard 
problem for water-transport equation 
\eqref{system-Darcy}--\eqref{system-Cahn-Hilliard}
\end{itemize}


\begin{align}
&\int_Q\mathscr{M}(\PP,\alpha,\phi)\nabla \por{\cdot}\nabla\tilde\por
-\zeta\DT{\tilde\por}\,\d x\d t+\int_\Sigma M\por\tilde\por\,\d S\d t
\\[-.3em]\nonumber
&\qquad\qquad\qquad\qquad\qquad\qquad=\int_\Omega\zeta_0\tilde\por(0)\,\d x
+\int_\Sigma M\mu_\flat\tilde\por\,\d S\d t
\end{align}
\begin{itemize}\item[]holds for all smooth $\tilde\por$ with $\tilde\por(T)=0$,  $\zeta$  takes values in $[0,1]$ and 
\end{itemize}
\begin{align}\label{mu-weak}
&\int_Q\Big(\FM(\PORO(\phi)\nabla y\,\WE{\PP^{-1}},\alpha,\phi,\tilde\zeta)
-\FM(\PORO(\phi)\nabla y\,\WE{\PP^{-1}},\alpha,\phi,\zeta)-\por(\tilde\zeta-\zeta)
\\[-.3em]&\hspace{3em}
-\kappa_4\nabla\zeta{\cdot}\nabla(\tilde\zeta-\zeta)
+\tauR\DT\zeta\tilde\zeta\Big)\,\d x\d t\nonumber
+\int_\Omega\frac12\tauR\zeta^2_0\,\d x
\ge\int_\Omega\frac12\tauR\zeta^2(T)\,\d x
\nonumber
\end{align}
\begin{itemize}\item[]for all $\tilde\zeta$ smooth valued in $[0,1]$.
\item[\rm (v)] The weak formulation of the heat equation \eqref{system-heat}
\end{itemize}
\begin{align}&\int_Q\!\mathscr{K}(\PP,\phi,\zeta,\theta)\nabla\theta{\cdot}
\nabla\tilde\theta-C_{\rm v}(\theta)\DT{\tilde\theta}
-r\tilde\theta\,\d x\d t+\int_\Sigma\! K\theta\tilde\theta\,\d S\d t 
\\[-.3em]&\qquad\qquad\qquad\qquad\qquad
=\int_\Sigma\! K\theta_\flat\tilde\theta\,\d S\d t
+\int_\Omega\! C_{\rm v}(\theta_0)\tilde\theta(0)\,\d x
\nonumber\end{align}\end{subequations}
\begin{itemize}\item[]holds for any $\tilde\theta$ smooth with $\tilde\theta(T)=0$
and with $C_{\rm v}(\cdot)$ denoting a primitive function to $c_{\rm
  v}(\cdot)$ and with
$r=r(\PP,\alpha,\phi,\theta;\DT\PP,\DT\alpha,\DT\phi,\DT\zeta,\nabla
\por^{})$ from \eqref{system-diss}.
\item[\rm (vi)] The remaining initial conditions  $y(0)=y_0$,   $\PP(0) = \PP_0$,  $\alpha(0)=\alpha_0$,
and $\phi(0)=\phi_0$ are satisfied.
\end{itemize}
\end{definition}



 Our main analytical result is an existence theorem for weak
solutions.  This is to  be seen as a mathematical consistency property of
the proposed model. It reads as follows. 

\begin{theorem}[Existence of weak solutions]\label{thm}
Let the assumptions \eqref{ass} hold. Then,  there exists  a weak solution 
$(y,\PP,\alpha,\phi,\zeta,\por,\theta)$  in the sense of 
Definition~{\rm {\ref{def}}}.  In addition 
\begin{align}\label{Laplaceans}
{\rm div}\big(|\nabla\PP|^{q-2}\nabla\PP\big)\in L^2(Q;\R^{d\times d}),\ \ \ \ \ \ 
\Delta\alpha\in L^2(Q),\ \ \text{ and }\ \ \Delta\phi\in L^2(Q). 
\end{align}
Moreover, the energy conservation \eqref{energy-conserv+} holds on the time 
intervals $[0,t]$ for all $t\in I$ in the  following  sense with 
$\vartheta(t)=C_{\rm v}(\theta(t))$:
\begin{align}\label{energy-conserv++}
& \int_\Omega\frac\varrho2|\DT y(t)|^2
+\vartheta(t)\,\d x
+\PsiM(\nabla y(t),\PP(t),\alpha(t),\phi(t),\zeta(t))
+\int_\Gamma \frac12N|y(t)|^2\d S
\\[-.2em]\nonumber
&\quad =\int_0^t\!\!\int_\Omega g(y)\cdot\DT y\,\d x\d t
+\int_0^t\!\!\int_\Gamma Ny_\flat\cdot\DT y+M(\por^{}{-}\mu_\flat)
+K(\theta{-}\theta_\flat)\,\d S\d t
\\[-.2em]&\qquad\quad+
\int_\Omega\frac\varrho2|v_0|^2+C_{\rm v}(\theta_0)\,\d x
+\PsiM(\nabla y_0,\PP_0,\alpha_0,\phi_0,\zeta_0)
+\int_\Gamma \frac12N|y_0|^2\d S\,.
\nonumber\end{align}
\end{theorem}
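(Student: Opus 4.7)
The proof proceeds by a Galerkin approximation in the spirit of the companion paper \cite{RouSte??TELS}, combined with the regularization already built into the model (the parameter $\epsilon>0$ in $\FM$, the barrier $\varpi$ on $\det\PP$, and the viscous $\tauR$-term in the Cahn-Hilliard block). The plan is: pick nested finite-dimensional subspaces $V_n$ for each of $y,\PP,\alpha,\phi,\zeta,\por$ (e.g.\ spanned by eigenfunctions of suitable elliptic operators with the prescribed boundary conditions) and write the discrete system as a Cauchy problem of ODEs. Local-in-time solvability follows from Carath\'eodory's theorem once the set of admissible $\PP_n$ with $\det \PP_n>0$ is shown to be open and invariant (this uses $\varpi(a)\to +\infty$ as $a\to 0^+$ from \eqref{ass-varpi}), and once $\zeta_n$ is confined to $[0,1]$ via a standard penalization/regularization of $\delta_{[0,1]}$ that is removed after the energy estimate.

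The a-priori estimates are obtained by testing the discrete momentum equation by $\dot y_n$, the discrete plastic flow rule by $\dot \PP_n\PP_n^{-1}$ (using the algebra \eqref{test-of-flow-rule}), the discrete damage/porosity rule by $(\dot\alpha_n,\dot\phi_n)$, and the Cahn-Hilliard block by $(\por_n,\dot\zeta_n)$; summing and invoking the coercivity from \eqref{ass-R-monotone}--\eqref{ass-D} yields the mechanical energy identity \eqref{energy-conserv}. This controls $\dot y_n\in L^\infty(I;L^2)$, $y_n\in L^\infty(I;H^2)$, $\PP_n\in L^\infty(I;W^{1,q})\cap H^1(I;L^2)$, $\alpha_n,\phi_n,\zeta_n\in L^\infty(I;H^1)\cap H^1(I;L^2)$, and $\por_n\in L^2(I;H^1)$, together with $\varpi(\det\PP_n)\in L^\infty(I;L^1)$. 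Since $q>d$, $\PP_n$ is uniformly H\"older in $x$, and combining this with \eqref{ass-varpi} yields the essential $L^\infty(Q)$ bound on $1/\!\det\PP_n$, hence on $\PP_n^{-1}$ itself; without this bound, neither the transport tensors in \eqref{M-K-pull-back+} nor the stresses in \eqref{stresses} can be controlled. Adding the spatial integral of the discrete heat equation, the total-energy identity \eqref{energy-conserv++} gives $\vartheta_n\in L^\infty(I;L^1)$, and a Boccardo-Gallou\"et type truncation argument upgrades this to $\theta_n\in L^r(I;W^{1,r})$ for any $r<(d{+}2)/(d{+}1)$, so that the heat flux and the dissipation rate $r$ can be identified in $L^1$ in the limit.

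The limit passage $n\to\infty$ rests on Aubin-Lions: strong convergence of $y_n$ in $C(I;H^{2-\delta})$ (hence uniform convergence of $\nabla y_n$), of $\PP_n$ in $C(I;C(\overline\Omega))$, and of $\alpha_n,\phi_n,\zeta_n$ in $C(I;L^p)$ for every $p<\infty$. These suffice to pass to the limit in every polynomial nonlinearity in \eqref{weak-form}, including the nonmonotone $\gamma$-term, which is bounded thanks to the $\epsilon$-regularization in \eqref{ansatz+}. The monotonicity assumption \eqref{ass-R-monotone}--\eqref{ass-D-monotone} identifies the limit of $\partial_P \mathfrak R$ and $\partial_{(\dot\alpha,\dot\phi)}\mathfrak D$ by a Minty-type argument. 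The crucial additional ingredient is strong convergence of $\nabla\por_n$ in $L^2(Q)$, which is required to pass to the limit in the dissipation term $\mathscr M\nabla\por\cdot\nabla\por$ entering $r$ on the right-hand side of the heat equation; this is extracted from the strong convexity of $\FM$ in $\zeta$ by subtracting the limit Cahn-Hilliard relation from the discrete one and testing by $\zeta_n-\zeta$, exactly as in \cite{RouSte??TELS}. The regularity \eqref{Laplaceans} is then obtained by comparison in the limit flow rules.

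The main obstacle will be the simultaneous control of the invertibility of $\PP$ (through $\varpi$) and the passage to the limit in the heat equation with only $L^1$ right-hand side: the two are coupled because all transport coefficients \eqref{M-K-pull-back+} contain $1/\!\det\PP$ and $\PP^{-1}$, so a genuine $L^\infty$ bound on $\PP^{-1}$ up to the limit is indispensable. Once this is secured by the Sobolev embedding $W^{1,q}\hookrightarrow C(\overline\Omega)$ (granted by $q>d$) and by the lower-semicontinuity of $\int \varpi(\det\PP)\,\d x$, the remaining steps follow the pattern already developed in \cite{RouSte??TELS} for the corresponding thermo-viscoplastic subsystem, to which we refer for the technical details.
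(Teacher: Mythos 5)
Your overall architecture (Galerkin discretization, energy test, H\"older/Healey--Kr\"omer control of $1/\!\det\PP$ via \eqref{ass-varpi} and $q>d$, strong convergence of $\nabla\por$ from the viscous Cahn--Hilliard structure) matches the paper's strategy, but there is a genuine gap in how you handle the heat equation. You propose a \emph{single-parameter} scheme and claim that ``a Boccardo--Gallou\"et type truncation argument upgrades'' the $L^\infty(I;L^1)$ bound on $\theta_n$ to $L^r(I;W^{1,r})$ \emph{at the Galerkin level}. This step fails as stated: the truncation-type test functions (nonlinear functions of $\vartheta_n$) do not belong to the finite-dimensional Galerkin subspace, so these tests are not admissible in the discrete problem. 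The paper avoids exactly this obstruction by a two-parameter construction: the heat-production rate, the boundary datum, and the initial temperature are first regularized by a parameter $\varepsilon$ so that the right-hand side of the (enthalpy-transformed) heat equation is \emph{bounded}, cf.\ \eqref{regularization}; then an $L^2$-theory applies at the Galerkin level (the admissible linear test by $\vartheta_{\varepsilon h}$ itself suffices, see \eqref{heat-tested}), the limit $h\to0$ is taken with $\varepsilon$ fixed (Proposition~\ref{prop-conv1}), and only \emph{afterwards} are the nonlinear tests tailored to the $L^1$-theory performed on the continuous regularized problem to obtain the $\varepsilon$-uniform estimates \eqref{est+} needed for the final limit $\varepsilon\to0$ (Lemma~\ref{lem-est+} and Proposition~\ref{prop-conv2}). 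Without this ordering your scheme has no $\varepsilon$-independent (or, in your notation, $n$-independent) bound on $\nabla\theta_n$.

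A second, smaller gap: you invoke a Minty-type argument to ``identify the limit'' of $\partial_\PR\mathfrak R$ and $\partial_{(\DT\alpha,\DT\phi)}\mathfrak D$, and you single out only $\nabla\por_n$ as needing strong convergence. But the heat source $r$ in \eqref{system-diss} is \emph{quadratic in all the rates}, so to pass to the limit in $r$ (even in $L^1$) you need strong $L^2(Q)$-convergence of $\DT\PP_n\PP_n^{-1}$, $\DT\alpha_n$, $\DT\phi_n$, and $\DT\zeta_n$ as well; mere identification of weak limits of the subdifferentials is insufficient. The paper derives these strong convergences from the uniform monotonicity assumptions \eqref{ass-R-monotone}--\eqref{ass-D-monotone} via limsup estimates of the type \eqref{large-damage-strong-conv}, which also require approximating the limit rates by elements of the discrete spaces before testing. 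The same strong convergences are what make the energy \emph{equality} \eqref{energy-conserv++} (rather than an inequality) available in the limit, a point your proposal does not address.
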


We will prove this  result  in Propositions~\ref{prop-est}--\ref{prop-conv2}
by a suitable regularization, transformation, and approximation  procedure. 
This  also provides a (conceptual) algorithm that is numerically stable  and 
converges as the discretization and the regularization parameters $h>0$ and 
$\varepsilon>0$ tend to $0$\WE{.} (More  specifically, when successively 
$h\to0$ and then $\varepsilon\to0$ and when $\theta$ is reconstructed from the 
rescaled temperature $\vartheta$ used below through \eqref{def-of-vartheta}).

\section{Convergence of Galerkin approximations}\label{sec-proof}
 We devote section to the proof of the existence result,  namely
 Theorem~\ref{thm}. As already mentioned,  
we apply a constructive method  delivering an approximation of
 the problem.  This results from combining a regularization in
 terms of the small parameter $\varepsilon$ and a Galerkin
 approximation, described by the small parameter $h>0$ instead.  In
 particular, we prove  the existence of approximated solutions,
 their stability (a-priori
estimates), and their  convergence  to   weak solutions, at least in terms of subsequences.  
The general philosophy of   a-priori  estimation  relies on 
 the fact that  temperature  plays a role in connection
with dissipative
mechanisms only: adiabatic 
effects are omitted and most estimates on the mechanical part of the system are
independent of temperature and its discretization. In addition to
this,  the viscous nature of the Cahn-Hilliard 
model (\ref{system}d,e)  allows us to obtain  useful estimates
 even in absence of additional  Kelvin-Voigt-type
viscosity,   which otherwise would bring 
additional mathematical complications.
The estimates and the convergence
rely on the  independence of the heat capacity of mechanical 
variables.  Let us however note that additional dependencies in
$c_{\rm v}$  could
be considered along the lines of  \cite{Roub13NCTV,Roub13NPDE}. 


 Let us begin by detailing the regularization. This 
  concerns the heat-production rate $r$ from 
\eqref{system-diss} as well as the prescribed heat flux on the boundary and the 
initial condition. More specifically, for  some given  regularization parameter 
$\varepsilon>0$, we replace  these terms  respectively by 
\begin{subequations}\label{regularization} 
\begin{align}\label{regularization-r} 
&r_\varepsilon=\frac{r\big(\PP,\alpha,\phi,\theta;\DT\PP,\DT\alpha,\DT\phi,\DT\zeta,\nabla\por\big)}
{1+\varepsilon\big(|\DT\PP^{}\PP^{-1}|^2+|\DT\alpha|^2+|\DT\phi|^2+|\nabla\por|^2\big)}\,,
\\&\label{regularization-BC-IC} 
\theta_{\flat\varepsilon}=\frac{\theta_\flat}{1+\varepsilon\theta_\flat},\ \ \ \ \ \ \ \text{ and }\ \ \ \ \ \ 
\theta_{0\varepsilon}=\frac{\theta_0}{1+\varepsilon\theta_0}.
\end{align}\end{subequations}
Due to the boundedness/growth assumptions (\ref{ass}g,j,k), the dissipation
rate $r$ has a quadratic growth in rates and thus
$r_\varepsilon$ is bounded  as well as 
$ \theta_{\flat,\varepsilon}  $ and
$\theta_{0\varepsilon}$. As effect of this boundedness, we are in the
position of resorting to a $L^2$-theory instead of the $L^1$-theory for the
regularized heat problem.  In addition, we perform a
regularization of the nonsmooth term $\mathfrak{N}_{[0,1]}^{}$ in
\eqref{p-por} by means of its  Yosida approximation, \WE{yielding}
the  mapping $\mathfrak{N}_\varepsilon$ \WE{defined as}

\vspace*{-2em}
\begin{align}\label{regularization-of-N} 
\mathfrak{N}_\varepsilon(\zeta)=\begin{cases}
\zeta/\varepsilon&\text{if }\zeta<0,\\[-.2em]
0&\text{if }0\le\zeta\le1,\\[-.2em]
(\zeta{-}1)/\varepsilon&\text{if }\zeta>1.
\end{cases}
\end{align}

In order to simplify the convergence proof, we apply the so-called 
enthalpy transformation  to the heat equation. This consists in rescaling 
temperature by introducing a new variable 
\begin{align}\label{def-of-vartheta}
\vartheta=C_{\rm v}(\theta) 
\end{align}
 where, we recall, $C_{\rm v}$ is the primitive of $c_{\rm v}$
vanishing in $0$. 
Note that $\DT\vartheta=c_{\rm v}(\theta)\DT\theta$ and that
$C_{\rm v}$ is increasing so that its inverse  $ C_{\rm v}^{-1}$ exists 
and $\nabla\theta=\nabla C_{\rm v}^{-1}(\vartheta)=\nabla\vartheta/c_{\rm v}(\theta)
=\nabla\vartheta/c_{\rm v}(C_{\rm v}^{-1}(\vartheta))$.  Upon
letting 
\begin{align*}
\mathfrak{K}(\PP,\phi,\zeta,\vartheta):=
\frac{1}{{c_{\rm v}(C_{\rm v}^{-1}(\vartheta))}}\mathscr{K}(\PP,\phi,\zeta,C_{\rm v}^{-1}(\vartheta)),\ \ \ \ 
\end{align*}
 we rewrite and regularize the system \eqref{system} by 
\begin{subequations}\label{system+}
\begin{align}\label{momentum-eq+}
&\varrho\DDT y={\rm div}
\,
\varSigma_{\rm el}
+
g(y),
\\&\label{flow-rule-pi++}
\partial_\PR^{}\mathfrak{R}\big(\alpha,\phi,C_{\rm v}^{-1}(\vartheta);
\DT\PP\PP^{-1}\big)
+
\,\varSigma_{\rm in}\PP^\top
=0,
\\[-.1em]&
\partial_{(\DT\alpha,\DT\phi)}\mathfrak{D}\Big(\!\alpha,\phi,C_{\rm v}^{-1}(\vartheta);
\Big(\!\begin{array}{c}\DT\alpha\\[-.3em]\DT\phi\end{array}\!\Big)\Big)
+\Big(\!\!\begin{array}{c}
p_{\rm age}\!
\\[-.1em]
p_{\rm eff}\,
\end{array}\!\!\Big)
=\Big(\!\!\begin{array}{c}0\!\!
\\[-.1em]
0\!\!\end{array}\Big),
\label{flow-rule+}
\\[-.3em]\label{system-Darcy+}
&\DT\zeta=\EE{\rm div}\big(\mathscr{M}(\PP,\alpha,\phi)
\nabla \por^{}\big)
,
\\[-.3em]\label{system-Cahn-Hilliard+}
&\por=
 \partial_\zeta \FM  -\kappa_4\Delta\zeta+\mathfrak{N}_\varepsilon(\zeta)+\tauR\DT\zeta,
\\
&\DT\vartheta
={\rm div}\big(\mathfrak{K}(\PP,\phi,\zeta,\vartheta)\nabla\vartheta\big)
+\frac{r\big(\PP,\alpha,\phi,C_{\rm v}^{-1}(\vartheta);\DT\PP,\DT\alpha,\DT\phi,\DT\zeta,\nabla \por^{}\big)}
{1+\varepsilon\big(|\DT\PP^{}\PP^{-1}|^2\!+|\DT\alpha|^2\!+|\DT\phi|^2\!+|\nabla\por|^2\big)}
\!\!
\label{system-heat+}
\end{align}
\end{subequations}
where $\varSigma_{\rm el}$ and $\varSigma_{\rm in}$ are again from 
(\ref{stresses}a,b) and $r$ is again given in \eqref{system-diss} and
$\mathfrak{N}_\varepsilon$ is  defined in 
\eqref{regularization-of-N}.  Note that the
$\varepsilon$-regularization serves the double purpose of having a
bounded right-hand side in \eqref{system-heat+} as well as a smooth
nonlinearity $\mathfrak{N}_\varepsilon$ in
\eqref{system-Cahn-Hilliard+}.  The boundary conditions 
are correspondingly modified by using \eqref{regularization-BC-IC}, i.e.\ 
$\mathscr K(\PP,\zeta,\theta)\nabla\theta{\cdot}\nu+K\theta=K\theta_\flat(t)$
in \eqref{BC-2} and $\theta(0)=\theta_0$ in \eqref{IC} modify respectively 
as
\begin{align}\label{BC-IC+}
\mathfrak{K}(\PP,\phi,\zeta,\vartheta)\nabla  C_{\rm
  v}^{-1}(\vartheta)  {\cdot}\nu
+K C_{\rm v}^{-1}(\vartheta)=K\theta_{\flat\varepsilon}(t), \ \ \ \ 
\vartheta(0)=\vartheta_{0\varepsilon}:=C_{\rm v}(\theta_{0\varepsilon})
\end{align}
with $\theta_{\flat\varepsilon}$ and $\theta_{0\varepsilon}$ from 
\eqref{regularization-BC-IC}.

A possible way of approximating \eqref{system+} is via a discretisation in 
time (sometimes, in its backward-Euler variant, called the {\it Rothe
method}). This would however give rise to mathematical difficulties because 
of the remarkable nonconvexity of the model, making estimation 
and even existence of discrete solutions troublesome.  
Note in particular that the (generalized) St.Venant-Kirchhoff ansatz 
\eqref{ansatz+}, which we have in mind as a prominent example, is 
already severely nonconvex (and even not semi-convex).

We therefore resort \REF{to} using a Galerkin  approximation in space instead 
(which, in its evolution variant,  is  sometimes referred to as 
{\it Faedo-Galerkin method}). For possible numerical implementation, one can 
imagine a conformal finite element  formulation,  with $h>0$ denoting
the {\it mesh size}.  Assume for simplicity that the sequence of
nested finite-dimensional subspaces $V_h \subset H^2(\Omega)$ invading
$H^1(\Omega)$ are given. We shall use these spaces for all scalar
variables (i.e., $\alpha$, $\phi$, $\zeta$, $\mu$, and $\vartheta$) so
that Laplacians are defined in the usual strong sense.   This  will allow
some simplification in the estimates. 
It is also important to choose 
the same sequences of finite-dimensional 
subspaces for both \eqref{system-Darcy+} and
\eqref{system-Cahn-Hilliard+}  in order to 
to facilitate cross-testing and  the  cancellation 
of the terms $\pm\por\DT\zeta$ also on the Galerkin-approximation level.
For simplicity,  we  assume that all initial conditions 
$(y_0,\PP_0,\alpha_0,\phi_0,\zeta_0,\vartheta_{0\varepsilon})$ 
belong to  
all finite-dimensional subspaces so that  no additional
approximation of such conditions is needed.


 The outcome of the Galerkin approximation is an 
 an initial-value problem for a system of ordinary 
differential-algebraic equations. The algebraic constraint arises from 
\eqref{system-Darcy+} and \eqref{system-Cahn-Hilliard+} by eliminating 
$\DT\zeta$, i.e.
\begin{align}\label{alg-const}
\por=
 \partial_\zeta \FM  -\kappa_4\Delta\zeta+\mathfrak{N}_\varepsilon(\zeta)+\tauR{\rm div}\big(\mathscr{M}(\PP,\alpha,\phi)
\nabla \por^{}\big).
\end{align}
In \eqref{est-w} below, we denote $|\cdot|_{h}^*$  the  seminorm on 
$L^2(I;H^1(\Omega)^*)$ defined by
\begin{align}\label{seminorm}
|\xi|_{h}^*:=\sup \left\{\int_Q\xi v\,\d x\d t \ :\
\|v\|_{L^2(I;H^1(\Omega))}\le1,\ v(t)\in V_{h} \ \ \text{for a.e.} \
t \in I\right\}. 
\end{align}
Similar seminorms  (with the same notation) are defined on 
spaces tensor-valued functions. On $L^2$-spaces we let  
\begin{align}\label{seminorm+}
|\xi|_{h}:=\sup \left\{\int_Q\xi v\,\d x\d t \ :\
\|v\|_{L^2(Q)}\le1,\ v(t)\in V_{h} \ \ \text{for a.e.} \
t \in I\right\}, 
\end{align}
to be used for \eqref{est-Delta-Pi} and \eqref{est-Delta-phi}
below.  This family of these seminorms make  
the linear spaces $L^2(I;H^1(\Omega)^*)$ 
and $L^2(Q;\R^{d\times d})$ and $L^2(Q)$ 
metrizable locally convex spaces (Fr\'echet spaces). 
\def\eps{\varepsilon}
 Henceforth, we use the symbol $C$ to indicate a
positive constant, possibly depending on data but independent from
regularization and discretization parameters. Dependences on such
parameters will be indicated in indices. Our stability result
reads as follows. 

\begin{proposition}[Discrete solution
and a priori  estimates]\label{prop-est}
Let  assumptions  \eqref{ass} hold and $\eps,h>0$ be fixed. Then, the Galerkin
approximation of \eqref{system+} with the initial/boundary conditions
\eqref{BC}---\eqref{IC} modified by \eqref{BC-IC+}  admits a solution 
on the whole time
interval $I=[0,T]$, let us denote it by $(y_{\eps h},\PP_{\eps h},\alpha_{\eps h},\phi_{\eps h},\zeta_{\eps h},\por_{\eps h},\vartheta_{\eps h})$, \WE{such that $\PP_{\eps h}$
is invertible} and 
we have the estimates 
\begin{subequations}\label{est}\begin{align}
&\big\|y_{\eps h}\big\|_{L^\infty(I;H^2
(\Omega;\R^d))\,\cap\,W^{1,\infty}(I;L^2(\Omega;\R^d))}^{}\le C,
\\[-.3em]\label{est-P}
&\big\|\PP_{\eps h}\big\|_{L^\infty(I;W^{1,q}(\Omega;\R^{d\times d}))\,\cap\,H^1(I;L^
2(\Omega;\R^{d\times d}))}^{}
\le C
\WE{\ \text{ and }\ \Big\|\frac1{\det\PP_{\eps h}}\Big\|_{L^\infty(Q)}\le C},
\\[-.3em]&\big\|\alpha_{\eps h}\big\|_{L^\infty(I;H^1
(\Omega))\,\cap\,H^1(I;L^2(\Omega))}^{}\le C,
\\&\big\|\phi_{\eps h}\big\|_{L^\infty(I;H^1
(\Omega))\,\cap\,H^1(I;L^2(\Omega))}^{}\le C,
\\&\big\|\zeta_{\eps h}\big\|_{L^\infty(I;H^1(\Omega))\,\cap\,H^1(I;L^2(\Omega))}^{}\le C,
\\&\label{est-por-H1}
\big\|\por_{\eps h}\big\|_{L^2(I;
H^1(\Omega))}^{}\le C,
\\&\label{est-theta-H1}
\big\|\vartheta_{\eps h}\big\|_{L^2(I;H^1
(\Omega))}\le C_\eps,
\\&\label{est-w}
\big|\DT\vartheta_{\eps h}\big|_{h_0}^*\le C_\eps\ \ \ \ \text{ for }\ h_0\ge h>0,
\\&
\big|{\rm div}(|\nabla\PP_{\eps h}|^{q-2}\nabla\PP_{\eps h})\big|_{h_0}^{}\le C
\ \ \ \ \text{ for }\ h_0\ge h>0,
\label{est-Delta-Pi}
\\&
\big|\Delta\alpha_{\eps h}\big|_{h_0}^{}\le C
\quad\text{ and }\quad
\big|\Delta\phi_{\eps h}\big|_{h_0}^{}\le C\ \ \ \ \text{ for }\ h_0\ge h>0,
\label{est-Delta-phi}
\\&\label{est-of-penalization-}
\big\|\min(0,\zeta_{\eps h}\big)\|_{L^\infty(I;L^2(\Omega))}\le
C/\sqrt\eps, \ 
\big\|\max(1,\zeta_{\eps h}\big)\|_{L^\infty(I;L^2(\Omega))}\le C/\sqrt\eps.
\end{align}\end{subequations}
\end{proposition}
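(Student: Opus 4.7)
The plan is to establish local existence of Galerkin solutions by ODE theory, then derive a priori estimates from a discrete energy balance, and finally bootstrap these to obtain the more refined bounds \eqref{est-w}--\eqref{est-of-penalization-} and to extend the solutions to the whole interval $I$.

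First I would reduce the semidiscrete Galerkin system to a system of ODEs on a finite-dimensional subspace. The algebraic relation \eqref{alg-const}, viewed on $V_h$, reads as an elliptic problem for $\por$ that is solvable thanks to the strict positive-definiteness of $\mathscr{M}(\PP,\alpha,\phi)$ (which requires $\PP$ to remain invertible with $1/\det\PP$ bounded) together with the zero-order term $\tauR^{-1}\por$ arising after eliminating $\DT\zeta$; this yields a Lipschitz map $(\PP,\alpha,\phi,\zeta)\mapsto\por$ on any relatively compact subset where $\det\PP$ stays uniformly positive. Substituting back into the remaining equations gives a Carathéodory ODE system, so local existence follows by Peano's theorem. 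Invertibility of $\PP_0$ propagates by continuity for a short time.

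The key step is the discrete energy identity obtained by testing the Galerkin momentum equation by $\DT y$, the plastic flow rule \eqref{flow-rule-pi++} by $\DT\PP$ (using the algebra $AB{:}C=A{:}CB^\top$ and \eqref{test-of-flow-rule}), the $(\alpha,\phi)$-flow rule by $(\DT\alpha,\DT\phi)$, the water-transport equation \eqref{system-Darcy+} by $\por$, the Cahn-Hilliard relation \eqref{system-Cahn-Hilliard+} by $\DT\zeta$, and the heat equation \eqref{system-heat+} by $1$. The crucial $\pm\por\DT\zeta$ cancellation is legitimate at the Galerkin level precisely because $\zeta_{\eps h}$ and $\por_{\eps h}$ live in the same subspace $V_h$. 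Summing yields the counterpart of \eqref{energy-conserv++}, from which the $\kappa_0$-term gives the $L^\infty(I;H^2)$ bound on $y_{\eps h}$, the $\kappa_1$-term the $L^\infty(I;W^{1,q})$ bound on $\PP_{\eps h}$, the kinetic term the $W^{1,\infty}(I;L^2)$ bound on $y_{\eps h}$, and the $\kappa_2,\kappa_3,\kappa_4$-terms the $L^\infty(I;H^1)$ bounds on $\alpha_{\eps h},\phi_{\eps h},\zeta_{\eps h}$. The $\varpi$-term together with \eqref{ass-varpi} controls $\|1/\det\PP_{\eps h}\|_{L^\infty(Q)}$, so (using Sobolev embedding $W^{1,q}\hookrightarrow L^\infty$ for $q>d$) $\PP_{\eps h}^{-1}$ is bounded in $L^\infty(Q)$. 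The dissipation side, combined with \eqref{ass-R}--\eqref{ass-D} and positive-definiteness of $\mathscr{M}$, yields the $L^2(Q)$ bounds on $\DT\PP_{\eps h}\PP_{\eps h}^{-1}$, $(\DT\alpha_{\eps h},\DT\phi_{\eps h})$, $\DT\zeta_{\eps h}$, and $\nabla\por_{\eps h}$; the last combined with the boundary Robin term and Poincaré gives \eqref{est-por-H1}. Multiplying $\DT\PP_{\eps h}\PP_{\eps h}^{-1}\in L^2$ by the bounded $\PP_{\eps h}$ recovers $\DT\PP_{\eps h}\in L^2(Q)$, and this closes the mechanical estimates \eqref{est-P}.

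For the regularized heat variable, testing \eqref{system-heat+} by $\vartheta_{\eps h}$ in $V_h$, using the $\eps$-bounded right-hand side from \eqref{regularization-r} and the uniform ellipticity of $\mathfrak{K}$ (which follows from \eqref{ass-M-K} and \eqref{ass-cv} once $1/\det\PP_{\eps h}$ is controlled), delivers \eqref{est-theta-H1}. The dual-norm bound \eqref{est-w} is then obtained by the very definition of $|\cdot|_{h_0}^*$, taking any admissible $v\in V_{h_0}\subset V_h$ (for $h_0\ge h$) with $\|v\|_{L^2(I;H^1)}\le 1$ as a Galerkin-admissible test function and using \eqref{est-theta-H1} and the already established $L^2(Q)$ rates to estimate the right-hand side. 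The seminorm Laplacian estimates \eqref{est-Delta-Pi}--\eqref{est-Delta-phi} proceed analogously: by testing the Galerkin flow rules for $\PP$, $\alpha$, $\phi$ with arbitrary $v\in V_{h_0}$ with $\|v\|_{L^2(Q)}\le 1$ and isolating the gradient term, whose other ingredients are already controlled in $L^2(Q)$ thanks to the polynomial growth built into \eqref{ansatz+} and the uniform bounds on $\PP_{\eps h}^{-1}$, $\PORO(\phi_{\eps h})$.

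Finally, the penalization bounds \eqref{est-of-penalization-} come from testing \eqref{alg-const} by the Galerkin projection of $\mathfrak{N}_\eps(\zeta_{\eps h})$ (which by construction lies in $V_h$ up to a smooth truncation), exploiting monotonicity of $\mathfrak{N}_\eps$ against the $\kappa_4$-Laplacian and against $\partial_\zeta\FM$, and absorbing the $\por_{\eps h}$-contribution by \eqref{est-por-H1}; this yields $\|\mathfrak{N}_\eps(\zeta_{\eps h})\|_{L^2(Q)}\le C$, which in view of \eqref{regularization-of-N} is exactly the claimed $C/\sqrt{\eps}$ control on $\min(0,\zeta_{\eps h})$ and $\max(0,\zeta_{\eps h}{-}1)$ in $L^\infty(I;L^2(\Omega))$ after an additional testing by these quantities themselves. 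The main obstacle throughout is keeping $\det\PP_{\eps h}$ strictly positive along the flow: this is done by a continuation argument, noting that the energy bound forces $\int\varpi(\det\PP_{\eps h})\,\d x$ to remain finite, hence by \eqref{ass-varpi} $\det\PP_{\eps h}$ cannot reach zero in finite time, so the local Galerkin solution extends to all of $I$.
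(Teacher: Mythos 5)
Your overall strategy coincides with the paper's: local solvability of the index-1 differential-algebraic Galerkin system, the energy test with the rates $(\DT y,\DT\PP,\DT\alpha,\DT\phi,\por,\DT\zeta)$ exploiting the $\pm\por\DT\zeta$ cancellation on the common subspace $V_h$, the $\vartheta_{\varepsilon h}$-test of the regularized heat equation, and comparison arguments in the dual seminorms. There are, however, two genuine gaps. First, the step ``the $\varpi$-term together with \eqref{ass-varpi} controls $\|1/\det\PP_{\varepsilon h}\|_{L^\infty(Q)}$'' is precisely the nontrivial point and does not follow from the energy balance alone: the energy only bounds $\int_\Omega\varpi(\det\PP_{\varepsilon h}(t))\,\d x$, i.e.\ it controls $1/\det\PP_{\varepsilon h}(t)$ merely in $L^q(\Omega)$. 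The upgrade to $L^\infty$ is the Healey--Kr\"omer theorem \cite{HeaKro09IWSS}, applied here to the plastic strain as in \cite{RouSte??TELS}: one combines the uniform H\"older continuity of $\PP_{\varepsilon h}(t)$ coming from the $L^\infty(I;W^{1,q}(\Omega;\R^{d\times d}))$ bound with $q>d$ with the growth exponent $q\ge pd/(p-d)$ in \eqref{ass-varpi} to exclude that $\det\PP_{\varepsilon h}$ approaches zero anywhere. Your closing continuation argument (``$\int_\Omega\varpi(\det\PP_{\varepsilon h})\,\d x$ finite, hence $\det\PP_{\varepsilon h}$ cannot reach zero'') relies on the same unproved integral-to-pointwise passage. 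Since the $L^\infty$ bounds on $\PP_{\varepsilon h}^{-1}$ and $1/\det\PP_{\varepsilon h}$ are what make $\mathscr M$ and $\mathscr K$ uniformly elliptic, and hence what yield \eqref{est-por-H1} and \eqref{est-theta-H1} through \eqref{estimation-of-mu}, this gap propagates to several of the asserted estimates.

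Second, your derivation of \eqref{est-of-penalization-} by testing \eqref{alg-const} with the Galerkin projection of $\mathfrak{N}_\varepsilon(\zeta_{\varepsilon h})$ is both more complicated than necessary and shaky at the discrete level: the projection onto $V_h$ preserves neither the sign of $\mathfrak{N}_\varepsilon(\zeta_{\varepsilon h})$ nor its monotone coupling with $-\Delta\zeta_{\varepsilon h}$ and with $\partial_\zeta\FM$, so the sign arguments you invoke are not available; moreover an $L^2(Q)$ bound on $\mathfrak{N}_\varepsilon(\zeta_{\varepsilon h})$ does not by itself give the stated $L^\infty(I;L^2(\Omega))$ control. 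The paper's route is already contained in your energy test: since $\mathfrak{N}_\varepsilon(\zeta_{\varepsilon h})\DT\zeta_{\varepsilon h}=\frac{\partial}{\partial t}\widehat{\mathfrak{N}}_\varepsilon(\zeta_{\varepsilon h})$ with $\widehat{\mathfrak{N}}_\varepsilon$ the quadratic primitive of \eqref{regularization-of-N} and $\widehat{\mathfrak{N}}_\varepsilon(\zeta_0)=0$ by \eqref{ass-IC}, the energy balance directly bounds $\sup_t\int_\Omega\widehat{\mathfrak{N}}_\varepsilon(\zeta_{\varepsilon h}(t))\,\d x$, which is exactly the claimed control of $\min(0,\zeta_{\varepsilon h})$ and $\max(1,\zeta_{\varepsilon h})$. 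A smaller omission: to estimate the boundary power $\int_0^t\!\int_\Gamma Ny_\flat{\cdot}\DT y_{\varepsilon h}\,\d S\d t$ uniformly you must integrate by parts in time as in \eqref{by-part-boundary}, since no trace of $\DT y_{\varepsilon h}$ is controlled by the left-hand side of the energy identity.
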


\begin{proof}[Sketch of the proof]
The existence of  a global solution to the   Galerkin
approximation   follows directly by  the usual
successive-continuation argument.  The algebraic constraint \eqref{alg-const} 
for 
the underlying system of ordinary differential-algebraic equations
takes  the   more specific form
\begin{align*}
\por=
m(\alpha,\phi)\frac{\zeta{-}\phi{-}\beta I_1}{\sqrt[4]{1{+}\epsilon I_2}}-\kappa_4\Delta\zeta
+\mathfrak{N}_\varepsilon(\zeta)+\tauR{\rm div}\big(\mathscr{M}(\PP,\alpha,\phi)
\nabla \por^{}\big).
\end{align*} 
The matrix arising  by approximating   the linear operator 
$\por\mapsto\por-\tauR{\rm div}(\mathscr{M}(\PP,\alpha,\phi)\nabla\por)$
 along   with the linear boundary condition \eqref{BC-2} 
turns out to be  positive definite,  therefore invertible.  Thus, we can 
obtain a solution to the underlying system 
of ordinary-differential equations,  for the 
differential-algebraic system has index 1. 

Let us now move to \WE{the} a-priori estimation. We start by recovering the
mechanical energy balance, see \eqref{energy-conserv} with
\eqref{system-diss}. In particular,  we use 
$\DT y_{\eps h}$, $\DT\PP_{\eps h}$, $\DT\alpha_{\eps h}$, $\DT\phi_{\eps h}$, 
$\por_{\eps h}$, and $\DT\zeta_{\eps h}$ as test functions  into
each corresponding equation discretized by the Galerkin method. All these 
tests are legitimate, provided the finite-dimensional spaces used in both 
equations in the Cahn-Hilliard systems (\ref{system+}b,e) are the same so the 
terms $\pm\por_{\eps h}\DT\zeta_{\eps h}$ cancel out even in the discrete 
level.  More specifically, using $\DT y_{\eps h}$  as test in the
Galerkin approximation of  \eqref{momentum-eq+} with 
its boundary condition \eqref{BC-1}, we obtain
\begin{align}\label{test-of-moment}
&\int_\Omega\!\frac\varrho2|\DT y_{\eps h}(t)|^2
+\frac{\kappa_0}2|\nabla^2y_{\eps h}(t)|^2\,\d x
+
\int_\Gamma\frac12N|y_{\eps h}(t)|^2\,\d S
\\\nonumber&\qquad+
\int_0^t\!\!\int_\Omega\! \partial_{ \nabla y } \widehatFM(\nabla y_{\eps h},\PP_{\eps h},\alpha_{\eps h},\phi_{\eps h},\zeta_{\eps h}){:}\nabla\DT y_{\eps h}\,\d x\d t
=\int_0^t\!\!\int_\Omega\!g(y){\cdot}\DT y_{\eps h}\,\d x\d t
\\&\qquad+\int_0^t\!\!\int_\Gamma Ny_\flat{\cdot}\DT y_{\eps h}\,\d S\d t
+\int_\Omega\!\frac\varrho2|v_0|^2+\frac{\kappa_0}2|\nabla^2y_0|^2\,\d x
+\int_\Gamma\frac12N|y_0|^2\,\d S.
\nonumber
\end{align}
 By testing the Galerkin approximation of 
\eqref{flow-rule-pi++}  
by $\DT\PP_{\eps h}$  one gets 
\begin{align*}\nonumber
&\int_\Omega\!\frac{\kappa_1}q|\nabla\PP_{\eps h}(t)|^q\d x
+\int_0^t\!\!\int_\Omega\!\partial_{\PR}{\mathfrak{R}}\big(\alpha_{\eps h},\phi_{\eps h},\theta_{\eps h};\DT\PP_{\eps h}\PP_{\eps h}^{-1}\big)
{:}\DT\PP_{\eps h}\PP_{\eps h}^{-1}
\\[-.3em]
&\qquad+
 \partial_\PP  \widehatFM(\nabla y_{\eps h},\PP_{\eps h},\alpha_{\eps h},\phi_{\eps h},\zeta_{\eps h})
{:}\DT\PP_{\eps h}\d x\d t
=\int_\Omega\!\frac{\kappa_1}q|\nabla\PP_0|^q\d x.
\end{align*}
Next, we test the Galerkin approximation of \eqref{flow-rule+} by 
$(\DT\alpha_{\eps h},\DT\phi_{\eps h})$, which gives
\begin{align}\label{test-of-flow-rule++}
&\int_\Omega\!\frac{\kappa_2}2|\nabla\alpha(t)|^2\d x +
\frac{\kappa_3}2|\nabla\phi(t)|^2\d x
+
\int_0^t\!\!\int_\Omega
\partial_{(\DT\alpha,\DT\phi)}\mathfrak{D}\Big(\alpha_{\eps h},\phi_{\eps h},\theta_{\eps h};\Big(\!\begin{array}{c}\DT\alpha_{\eps h}
\\[-.1em]\DT\phi_{\eps h}\end{array}\!\Big)\Big)
{\cdot}\Big(\!\begin{array}{c}\DT\alpha_{\eps h}\\[-.1em]\DT\phi_{\eps h}\end{array}\!\Big) \Big)\d x\d t\\[-.1em]&\nonumber\qquad
+
\int_0^t\!\!\int_\Omega
\partial_{\alpha}\widehatFM(\nabla y_{\eps h},\PP_{\eps h},\alpha_{\eps h},
\phi_{\eps h},\zeta_{\eps h})\DT\alpha_{\eps h}\d x\d t
\\[-.1em]&
\qquad
+
\int_0^t\!\!\int_\Omega
\partial_{\phi}\widehatFM(\nabla y_{\eps h},\PP_{\eps h},\alpha_{\eps h},\phi_{\eps h},\zeta_{\eps h})\DT\phi_{\eps h}
\d x\d t
=
\int_\Omega\!\frac{\kappa_2}2|\nabla\alpha_0|^2+
\frac{\kappa_3}2|\nabla\phi_0|^2\d x\,.
\nonumber
\end{align}
We now test the Galerkin approximation of \eqref{system-Cahn-Hilliard+} by 
$\DT\zeta_{\eps h}$. Such procedure leads to a (system of ordinary) differential 
equation instead of the inclusion, so that conventional calculus applies. 
This gives 
\begin{align}\label{test-of-Cahn-Hilliard}
&\int_0^t\!\!\int_\Omega\!\tauR\DT\zeta_{\eps h}^2-\por_{\eps h}\DT\zeta_{\eps h}\,\d x\d t
=-\int_0^t\!\!\int_\Omega\!
\partial_\zeta^{} \widehatFM(\nabla y_{\eps h}, \PP_{\eps h},  \alpha_{\eps h},\phi_{\eps h},\zeta_{\eps h})\DT\zeta_{\eps h}
\\[-.4em]&\qquad\qquad\qquad\qquad\qquad\qquad\qquad\qquad
+\kappa_4\nabla\zeta_{\eps h}{\cdot}\nabla\DT\zeta_{\eps h}
+\mathfrak{N}_\varepsilon(\zeta_{\eps h})\DT\zeta_{\eps h}\,\d x\d t
\nonumber
\end{align}
with $\mathfrak{N}_\varepsilon$ from \eqref{regularization-of-N}. 
Testing the Galerkin approximation of \eqref{system-Darcy+} by
$\por_{\eps h}$
we  obtain 
\begin{align}\label{test-of-Darcy}
&\int_0^t\!\!\int_\Omega\!
\mathscr{M}(\PP_{\eps h},\alpha_{\eps h},\phi_{\eps h})\nabla \por_{\eps h}
{\cdot}\nabla \por_{\eps h}
\,\d x\d t+\int_0^t\!\!\int_\Sigma \!M\por_{\eps h}^2\,\d S\d t
\\[-.4em]&\qquad\qquad\qquad\qquad\qquad
=\int_0^t\!\!\int_\Gamma \!
M\por_{\eps h}\mu_\flat
\,\d S\d t
-\int_0^t\!\!\int_\Omega\!\DT\zeta_{\eps h}\por_{\eps h}
\,\d x\d t.
\nonumber
\end{align}
%
%
Summing \eqref{test-of-Cahn-Hilliard} and \eqref{test-of-Darcy} up and
 exploiting the  cancellation of the terms $\pm\int_\Omega
\DT\zeta_{\eps h} \por_{\eps h}\,\d x$, we obtain
\begin{align}\label{C-H-tested}
&
\int_0^t\!\!\int_\Omega\partial_\zeta\widehatFM(\nabla y_{\eps h},\PP_{\eps h},\alpha_{\eps h},\phi_{\eps h},\zeta_{\eps h})\DT\zeta_{\eps h}\,\d x\d t
+\int_0^t\!\!\int_\Gamma M\por_{\eps h}^2\,\d S\d t
\\&\quad \nonumber
+\int_\Omega\frac{\kappa_4}2|\nabla\zeta_{\eps h}(t)|^2\,\d x
+\int_0^t\!\!\int_\Omega\mathscr{M}(\PP_{\eps h},\alpha_{\eps h},\phi_{\eps h})
\nabla\por_{\eps h}{\cdot}\nabla\por_{\eps h}
+\tauR\DT\zeta_{\eps h}^2\,\d x\d t\nonumber
\\&\quad\le\int_\Omega\frac{\kappa_4}2|\nabla\zeta_0|^2\,\d t\d x
+\int_0^t\!\!\int_\Gamma M\por_{\eps h}\mu_\flat\,\d S\d t.
\nonumber
\end{align}
The inequality sign in \eqref{C-H-tested} comes from the fact that 
$\mathfrak{N}_\varepsilon(\zeta_{\eps h})\DT\zeta_{\eps h}\,\d x\d t\le0$, using 
$0\le\zeta_0\le1$  as well,  cf.\ \eqref{ass-IC}.

Taking the sum of  
\eqref{test-of-moment}--\eqref{test-of-flow-rule++}
 and \eqref{C-H-tested}  and using the calculus 
\begin{align}\nonumber
&  \partial_{\nabla y} \widehatFM{:}\nabla\DT y_{\eps h}+\partial_{\PP} \widehatFM{:}\DT\PP_{\eps h}
+
\partial_\alpha\widehatFM\DT\alpha_{\eps h}+
\partial_\phi\widehatFM\DT\phi_{\eps h}
+\partial_\zeta\widehatFM\DT\zeta_{\eps h}
=\frac{\partial}{\partial t}\widehatFM(\nabla y_{\eps h},\PP_{\eps
  h},\alpha_{\eps h},\phi_{\eps h},\zeta_{\eps h}),\nonumber 
\end{align} 
we obtain the discrete analogue of \eqref{energy-conserv}. 

The boundary term in \eqref{test-of-moment} contains $\DT y$, which is not
well defined on $\Gamma$.  We overcome this obstruction by  
by-part integration  
\begin{align}\label{by-part-boundary}
&\int_0^t\!\int_\Gamma Ny_\flat{\cdot}\DT y_{\eps h}\,\d S\d t
=\int_\Gamma Ny_\flat(t){\cdot}y_{\eps h}(t)\,\d S
-\int_0^t\!\int_\Gamma N\DT y_\flat{\cdot}y_{\eps h}\,\d S\d t
-\int_\Gamma Ny_\flat(0){\cdot}y_0\,\d S 
\end{align}
so that this boundary term can be estimated by using the assumption 
\eqref{ass-load} on $y_\flat$. Furthermore, the last term in \eqref{C-H-tested} 
can be estimated as 
\begin{align}
\int_0^t\!\!\int_\Gamma M\mu_\flat\por_{\eps h}\,\d S\d t
\le M C\|\mu_\flat\|_{L^2(\Sigma)}^{}\big(\|\por_{\eps h}\|_{L^2(\Sigma)}^{}+
\|\nabla\por_{\eps h}\|_{L^2(Q;\R^d)}^{}\big)\nonumber
\end{align}
 where $C$ is here  the norm of the trace  operator
$H^1(\Omega)\to L^2(\Gamma)$ (by 
considering the norm $\|\nabla\cdot\|_{L^2(\Omega;\R^d)}+\|\cdot\|_{L^2(\Gamma)}$ on 
$H^1(\Omega)$).

 These estimates allow us to obtain the bounds   (\ref{est}a-f). More
 in detail, \eqref{est-P}  follows from   the 
coercivity \eqref{ass-R} of $\mathfrak{R}$ so that we have also 
that   $\DT\PP_{\eps h}^{_{}}\PP_{\eps h}^{-1}$  is bounded  in
$L^2(Q;\R^{d\times d})$.  In particular, we have here used the
boundary condition on the plastic strain \eqref{BC-3}.

\WE{
An important ingredient was that, exploiting \eqref{ass-varpi}, we can use the 
Healey-Kr\"omer Theorem  \cite[Thm. 3.1]{HeaKro09IWSS}, 
originally devised for the deformation gradient,  as done 
already in \cite{RouSte??TELS} for the plastic strain. 
This gives the second estimate in \eqref{est-P}, which holds at the
Galerkin level as well, so that
in fact the singularity of $\varpi$ is not seen during the evolution and
the Lavrentiev phenomenon is excluded. 
Let us point out that, in the frame of our weak thermal coupling the 
assumption \eqref{ass-R}, these estimates  hold  independently of temperature, 
and thus the constants in (\ref{est}a,b) are independent of $\varepsilon$.

Using the boundedness of the $\mathscr{K}$-term and
the positive definiteness of $\mathbb{K}$ in \eqref{ass-M-K}, and 
recalling \eqref{M-pull-back+}, we get the bound 
$\| \WE{\PP^{-T}_{\eps h}}\nabla\por_{\eps h}/\sqrt{\det\PP_{\eps h}}\|_{L^2(Q)^d}\le C$. 
Then the estimate \eqref{est-por-H1} follows by using
\begin{align}\label{estimation-of-mu}
\|\nabla\por_{\eps h}\|_{L^2(Q)^d}
&=\Big\|\frac{\PP_{\eps h}^{\top}\WE{\PP^{-T}_{\eps h}}}{\det\PP_{\eps h}}\nabla\por_{\eps h}\Big\|_{L^2(Q)^d}
\\&\nonumber
\le\Big\|\frac{\PP_{\eps h}^{}}{\sqrt{\det\PP_{\eps h}}}\Big\|_{L^\infty(Q)^{d\times d}}
\Big\|\frac{\WE{\PP^{-T}_{\eps h}}}{\sqrt{\det\PP_{\eps h}}}\nabla\por_{\eps h}\Big\|_{L^2(Q)^d}\le C,
\end{align}
where the latter bound follows from \eqref{est-P}. 
}

Let us point out that, in the frame of assumptions (\ref{ass}j,k), these 
estimates  hold  independently of temperature, and thus the constants
in (\ref{est}a-f) are independent of $\varepsilon$.


Let us now test the Galerkin approximation of  the heat equation 
\eqref{system-heat+} by $\vartheta_{\eps h}$. 
This test  is allowed at the level of Galerkin approximation, 
although it does not lead to  the total energy balance. 
 We  obtain
\begin{align}\label{heat-tested}
&\frac{\d}{\d t}\frac12\int_\Omega\!\vartheta_{\eps h}^2
\,\d x
+\int_\Omega\!
\mathfrak{K}(\PP_{\eps h},\phi_{\eps h},\zeta_{\eps h},\vartheta_{\eps h})
\nabla\vartheta_{\eps h}{\cdot}\nabla\vartheta_{\eps h}\,\d x
+\int_\Gamma\! K\vartheta_{\eps h}^2\,\d S
\\[-.4em]&
\qquad\qquad\qquad\qquad\qquad\qquad\qquad\qquad =
\int_\Omega\! r_\eps\vartheta_{\eps h}\,\d x+\int_\Gamma\!
K\theta_{\flat\eps}\vartheta_{\eps h}\,\d S.
\nonumber
\end{align}
After integration over $[0,t]$, we use the Gronwall inequality 
and exploit 
the control of the initial condition 
$|\theta_{0\eps}|\le1/\eps$ due to \eqref{regularization-BC-IC}. 
The last boundary term in \eqref{heat-tested}  can be controlled as  
$|\theta_{\flat,\eps}|\le1/\eps$, again due to \eqref{regularization-BC-IC}. 
 By arguing as for the $\mathscr{M}$-term, we use the   
$\mathscr{K}$-term  in order to get  the bound 
$\|\WE{\PP^{-1}_{\eps h}} \nabla\theta_{\eps h}\|_{L^2(Q;\R^d)}\le
C_\eps$ and  then  $\|\nabla\theta_{\eps h}\|_{L^2(Q;\R^d)}\le
C_\eps$, see  \eqref{est-theta-H1}.
\WE{Analogous arguments as \eqref{estimation-of-mu}
lead to the estimate \eqref{est-theta-H1} for 
$\nabla\vartheta_{\eps h}$, now depending on the regularization parameter $\eps$.
}

By comparison, we obtain the estimate \eqref{est-w} of $\DT\vartheta_{\eps h}$
in the seminorm \eqref{seminorm}.  Again  by comparison, using \eqref{flow-rule-pi++} 
with \eqref{def-of-Sin}  and taking advantage of  the boundedness of the term
$\partial_\PR^{}\mathfrak{R}(\alpha_{\eps h},\phi_{\eps h},\theta_{\eps
  h};\DT\PP_{\eps h}\PP_{\eps h}^{-1})\PP_{\eps h}^{-\top}$ in $
L^2(Q;\R^{d\times d})$, the first term in 
\eqref{def-of-Sin}, i.e.\  
$\PORO(\phi_{\eps h})\nabla y_{\eps h}^\top
\partial_{F_{\rm el}}^{}\FM(\cdot){:} \partial_\PP \WE{\PP^{-1}}$, turns
out to be  bounded in $L^2(Q;\R^{d\times d})$, because
$\nabla y_{\eps h}^\top$ is bounded in
$L^\infty(I;L^6(\Omega;\R^{d\times d}))$
and $\partial_{F_{\rm el}}^{}\FM$ is bounded in
$L^2(I;L^3(\Omega;\R^{d\times d}))$ for $d\le3$ and $\PORO(\phi_{\eps h}) \partial_\PP \WE{\PP^{-1}}$ is controlled in 
$L^\infty(Q;\R^{ d\times d\times d\times d})$.   Here, we  emphasize that 
one cannot perform \eqref{flow-rule-pi}  the nonlinear test by 
${\rm div}(|\nabla\PP_{\eps h}|^{q-2}\nabla\PP_{\eps h})$ to obtain the
estimate \eqref{est-Delta-Pi} in the full $L^2(Q)$-norm.
%

By analogous arguments, also \eqref{est-Delta-phi} can be obtained by 
comparison from \eqref{flow-rule-pi+}.  More precisely, we might
get here for
\eqref{est-Delta-phi}  the full $L^2(Q)$-norm  upon testing 
\eqref{flow-rule-pi+}  on  $(\Delta\alpha,\Delta\phi)^\top$,
 which would be allowed if we assume to construct the finite
dimensional spaces  starting from  eigenfunctions of the Laplacian. 

 Finally,   estimate \eqref{est-of-penalization-} follows 
as the term 
$\int_0^t\int_\Omega\mathfrak{N}_\eps(\zeta_{\eps h})\DT\zeta_{\eps h}\,\d x\d t$
with $\mathfrak{N}_\eps$ from \eqref{regularization-of-N} is bounded.
\end{proof}

\begin{proposition}[Convergence of the Galerkin approximation for $h\to0$]\label{prop-conv1}
Let  assumptions  \eqref{ass} hold and let $\eps>0$ be fixed. 
Then, for $h\to0$, there  exists a not relabeled  subsequence of 
$\{(y_{\eps h},\PP_{\eps h},\alpha_{\eps h},\phi_{\eps h},\zeta_{\eps h},\por_{\eps h},\vartheta_{\eps h})\}_{h>0}^{}$
converging weakly* in the topologies indicated in \eqref{est}{\rm a-g}
to some $(y_\eps,\PP_\eps,\alpha_\eps,\phi_\eps,\zeta_\eps,\por_\eps,
\vartheta_\eps)$.
Every such limit seven-tuple is a weak solution 
to the regularized problem \eqref{system+} with the initial/boundary conditions
\eqref{BC}---\eqref{IC} modified by \eqref{BC-IC+}. Moreover, the following
a-priori estimates hold
\begin{subequations}\begin{align}\label{est++}
&\big\|{\rm div}(|\nabla\PP_{\eps}|^{q-2}\nabla\PP_{\eps})\big\|_{L^2(Q;\R^{d\times d})}\le C,
\ \   \big\|\Delta\alpha_{\eps}\big\|_{L^2(Q)}\le C,\ \  
\big\|\Delta\phi_{\eps}\big\|_{L^2(Q)}\le C,
\\&\label{est-of-penalization}
\big\|\min(0,\zeta_\eps)\big\|_{L^\infty(I;L^2(\Omega))}\le C/\sqrt\eps
\quad\text{ and }\quad
\big\|\max(1,\zeta_\eps)\big\|_{L^\infty(I;L^2(\Omega))}\le C/\sqrt\eps.
\end{align}\end{subequations}
 Furthermore,  the following strong convergences hold for $h\to0$
\begin{subequations}\label{strong-conv}
\begin{align}&&&\label{strong-conv-DT-Pi}
\DT\PP_{\eps h}^{}\PP_{\eps h}^{-1}\to\DT\PP_\eps^{}\PP_\eps^{-1}&&\text{strongly in }\ L^2(Q;\R^{d\times d}),
\\&&&\label{strong-conv-DTalpha}
\DT\alpha_{\eps h}\to\DT\alpha_\eps\ \text{ and }\ 
\DT\phi_{\eps h}\to\DT\phi_\eps\ \text{ and }\ 
\DT\zeta_{\eps h}\to\DT\zeta_\eps&&\text{strongly in }\ L^2(Q),
\\&&&\label{strong-conv-Pi-e-h}
\nabla\PP_{\eps h}\to\nabla\PP_\eps&&\text{strongly in }\ L^q(Q;\R^{d\times d\times d}),&&&&
\\&&&\label{strong-conv-mu}
\nabla\mu_{\eps h}\to\nabla\mu_\eps&&\text{strongly in }\ L^2(Q;\R^d).
\end{align}\end{subequations}
\end{proposition}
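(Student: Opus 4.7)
The plan is to organize the proof around four steps: weak-$*$ extraction from the estimates of Proposition~\ref{prop-est}, compactness upgrades, identification of the weak limit as a weak solution of the regularized problem, and finally the strong convergences \eqref{strong-conv} via energy-type identities combined with monotonicity.

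First, from \eqref{est} and the Banach-Alaoglu theorem I would extract a (not relabelled) subsequence with weak-$*$ limits $(y_\varepsilon,\PP_\varepsilon,\alpha_\varepsilon,\phi_\varepsilon,\zeta_\varepsilon,\por_\varepsilon,\vartheta_\varepsilon)$ in the spaces prescribed in \eqref{est}a--g. The uniform bound $1/\det\PP_{\varepsilon h}\in L^\infty(Q)$ ensures that $\det\PP_\varepsilon$ is a.e.~bounded away from zero. Applying Aubin-Lions with the compact embeddings $H^2\Subset W^{1,r}$ and $H^1\Subset L^r$ for $r<2^*$, together with $W^{1,q}\Subset C(\overline\Omega)$ (valid since $q>d$), and combining with the time-derivative bounds, I obtain
\begin{align*}
&y_{\varepsilon h}\to y_\varepsilon\;\text{in}\;C(I;W^{1,r}(\Omega;\R^d)),\quad \PP_{\varepsilon h}\to\PP_\varepsilon\;\text{in}\;C(I;C(\overline\Omega;\R^{d\times d})),\\
&(\alpha_{\varepsilon h},\phi_{\varepsilon h},\zeta_{\varepsilon h})\to(\alpha_\varepsilon,\phi_\varepsilon,\zeta_\varepsilon)\;\text{in}\;C(I;L^r(\Omega)),
\end{align*}
and in particular $\PP_{\varepsilon h}^{-1}\to\PP_\varepsilon^{-1}$ uniformly on $Q$.

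Second, these strong convergences allow passage to the limit in the Galerkin weak formulation of \eqref{system+}. The derivatives $\partial_{F_{\rm el}}\FM,\partial_\alpha\FM,\partial_\phi\FM,\partial_\zeta\FM$ are continuous with at most quadratic growth in $F_{\rm el}$ thanks to the $\epsilon$-regularization in \eqref{ansatz+}, hence converge strongly in $L^2(Q)$; testing against smooth functions and their Galerkin projections yields the limiting momentum equation, both flow rules, the Cahn-Hilliard system (with $\mathfrak{N}_\varepsilon$ smooth) and the heat equation (whose source $r_\varepsilon$ is uniformly bounded by \eqref{regularization-r}). The estimates \eqref{est++} follow from the seminorm bounds \eqref{est-Delta-Pi}--\eqref{est-Delta-phi}: fixing $h_0>0$ and sending $h\to 0$ preserves the seminorm bounds for the limits, after which $h_0\to 0$ together with the density of $\bigcup_{h>0}V_h$ in $L^2$ upgrades them to full $L^2(Q)$-estimates. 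The penalization bound \eqref{est-of-penalization} follows from \eqref{est-of-penalization-} by weak lower semicontinuity.

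Third, for the strong convergences \eqref{strong-conv} I would use monotonicity combined with passage to the limit in the tested identities of the proof of Proposition~\ref{prop-est}. The cross-test of \eqref{system-Darcy+} and \eqref{system-Cahn-Hilliard+} as in \eqref{test-of-Cahn-Hilliard}--\eqref{C-H-tested} yields at the Galerkin level an identity involving the quadratic dissipation term; using the already established strong convergences on its right-hand side and comparing with the analogous identity for the limit one obtains
\begin{align*}
\int_Q\!\!\mathscr M(\PP_{\varepsilon h},\alpha_{\varepsilon h},\phi_{\varepsilon h})\nabla\por_{\varepsilon h}{\cdot}\nabla\por_{\varepsilon h}+\tauR\DT\zeta_{\varepsilon h}^2\,\d x\d t\to\int_Q\!\!\mathscr M(\PP_\varepsilon,\alpha_\varepsilon,\phi_\varepsilon)\nabla\por_\varepsilon{\cdot}\nabla\por_\varepsilon+\tauR\DT\zeta_\varepsilon^2\,\d x\d t,
\end{align*}
which together with the positive definiteness of $\mathbb M$ from \eqref{ass-M-K} and the uniform convergence of the mobility gives \eqref{strong-conv-mu} and the $\zeta$-part of \eqref{strong-conv-DTalpha}. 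The convergence \eqref{strong-conv-DT-Pi} is obtained by testing the plastic flow rule \eqref{flow-rule-pi++} as in \eqref{test-of-flow-rule} and invoking the uniform monotonicity \eqref{ass-R-monotone} of $\partial_\PR\mathfrak R$; the $(\alpha,\phi)$-parts of \eqref{strong-conv-DTalpha} follow analogously from \eqref{ass-D-monotone}. Finally, \eqref{strong-conv-Pi-e-h} is obtained by a Minty-type argument from the uniform monotonicity of the $q$-Laplacian $-{\rm div}(|\nabla\PP|^{q-2}\nabla\PP)$ after subtracting the limit plastic flow rule from the discrete one.

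The main obstacle will be \eqref{strong-conv-mu}. Without viscous regularization only weak convergence of $\nabla\por_{\varepsilon h}$ would be available, which is insufficient to identify the quadratic Darcy contribution $\mathscr M\nabla\por\cdot\nabla\por$ in the heat source $r$; moreover the nonconvexity of $\FM$ in $\zeta$ blocks any purely variational argument. The crucial structural ingredient is the viscous Cahn-Hilliard $\tauR$-term, which supplies the coercive contribution $\tauR\DT\zeta^2$ in the cross-tested identity and so simultaneously delivers the strong $L^2$-convergence of $\DT\zeta_{\varepsilon h}$ and of $\nabla\por_{\varepsilon h}$; verifying that every term on the right-hand side of the cross-tested identity genuinely passes to the limit (especially the mechanical-energy contributions $\partial_\zeta\FM\,\DT\zeta$) is the most delicate part of the argument.
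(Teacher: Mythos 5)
Your overall strategy --- weak-$*$ extraction, Aubin--Lions upgrades, limit passage in the semilinear parts, derivation of \eqref{est++}--\eqref{est-of-penalization} from the $h$-uniform seminorm bounds, and strong convergence of the rates via uniform monotonicity combined with energy-type identities (with the viscous $\tauR$-term correctly identified as what delivers $\DT\zeta_{\eps h}$ and $\nabla\por_{\eps h}$ \emph{simultaneously}) --- is exactly the paper's.

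One point needs repair: as written, your order of operations is circular for the two flow rules. In your second step you claim the limiting flow rules already follow from the Aubin--Lions convergences of the state variables, but $\partial_\PR^{}\mathfrak{R}(\ldots;\DT\PP\PP^{-1})$ and $\partial_{(\DT\alpha,\DT\phi)}\mathfrak{D}(\ldots)$ are nonlinear in the rates, which at that stage converge only weakly; and in your third step you obtain the strong rate convergences by ``subtracting the limit plastic flow rule from the discrete one'', i.e.\ by invoking limit equations you have not yet established. (This issue does not arise for the Cahn--Hilliard subsystem, which is semilinear in $\DT\zeta$ and $\nabla\por$, so there your order --- limit identity first, then strong convergence --- is correct and is what the paper does.) The paper's fix for the flow rules is to run the monotonicity argument entirely at the Galerkin level: one tests the \emph{discrete} flow rule by $\DT z_{\eps h}-\DT{\tilde z}_{h}$, where $\tilde z_h$ are finite-dimensional approximants of the limit $z_\eps$ (needed because $\DT z_\eps$ itself is not an admissible Galerkin test function --- a device your sketch omits), and closes the estimate using only the already-available regularity $\Delta\alpha_\eps,\Delta\phi_\eps\in L^2(Q)$ from \eqref{est++} together with the chain-rule identity $\int_Q\DT z_\eps\Delta z_\eps\,\d x\d t=\frac12\int_\Omega|\nabla z_0|^2-|\nabla z_\eps(T)|^2\,\d x$. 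Only after the strong convergences \eqref{strong-conv} are in hand does one pass to the limit in the $\mathfrak{R}$- and $\mathfrak{D}$-terms and in the heat source. With this reordering and the approximant device, your argument coincides with the paper's.
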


\begin{proof}
The existence of weakly* converging  not relabeled 
subsequences  follows by the classical 
 Banach selection principle.
Let us  indicate one such weak* limit by   
$(y_\eps,\PP_\eps,\alpha_\eps,\phi_\eps,\zeta_\eps,\por_\eps,
\vartheta_\eps)$  and  prove that it solves the regularized
problem \eqref{system+}.
 Note that, the 
estimates \eqref{est++}   follow  from (\ref{est}i,j) which 
are independent of $h$ and $h_0$, cf.\ \cite[Sect.~8.4]{Roub13NPDE} for this
technique. The additional estimate \eqref{est-of-penalization} is 
a consequence of 
\eqref{est-of-penalization-}.

 In order to check that weak* limits are solutions, we are called
to prove convergence of the dissipation rate term,  i.e.\ the
heat-production rate,  in the
heat-transfer equation.  This in turn requires that we prove the  strong convergence of 
$\DT\PP_{\eps h}$, $\DT\alpha_{\eps h}$, $\DT\phi_{\eps h}$, $\DT\zeta_{\eps h}$,
and of $\nabla\por_{\eps h}$, i.e.\ (\ref{strong-conv}a,b,d).   
To this  aim, let $\widetilde\PP_{h}$, $\tilde\alpha_{h}$, $\tilde\phi_{h}$, 
$\tilde\zeta_{h}$, and $\tilde\por_h$ be elements of the
finite-dimensional subspaces which are approximating $\PP_\eps$,
$\alpha_\eps$, $\phi_\eps$, $\zeta_\eps$,  and $\mu_\eps$   with respect to strong $L^2$
topologies along with the corresponding time derivatives. Such
approximants can be constructed by projections 
at the level of time derivatives.  

\def\Z{z}

We begin by discussing the terms  $\DT\alpha_{\eps h}$ and $\DT\phi_{\eps h}$, for 
they allow essentially the same  treatment. Let us introduce the shorthand 
notation 
\begin{align}\label{z-notation}
\Z:=(\alpha,\phi)
\end{align}
in this proof and, for notational simplicity, consider 
$\kappa_2=\kappa_3=:\kappa$.
We  crucially exploit  the strong monotonicity of 
 $\partial_{\DT\Z}^{}\mathfrak{D}(\alpha, \phi, \theta;\cdot)$. 
Referring to $a_\mathfrak{D}^{}>0$ from the uniform monotonicity assumption
\eqref{ass-D-monotone}, we can estimate
\begin{align}\label{large-damage-strong-conv}
&\limsup_{h\to0}\frac{a_\mathfrak{D}^{}}2\|\DT\Z_{\eps h}{-}\DT\Z_\eps\|_{L^2(Q;\R^2)}^2
\\[-.3em]&\nonumber\quad
\le\limsup_{h\to0}a_\mathfrak{D}^{}\|\DT\Z_{\eps h}{-}\DT{\tilde\Z}_{h}\|_{L^2(Q;\R^2)}^2
+\lim_{h\to0}a_\mathfrak{D}^{}\|\DT{\tilde\Z}_{h}{-}\DT\Z_\eps\|_{L^2(Q;\R^2)}^2
\\[-.3em]&\nonumber\quad
\le\limsup_{h\to0}\int_Q\big(
\partial_{\DT\Z}^{}\mathfrak{D}(\Z_{\eps h},\theta_{\eps h};\DT\Z_{\eps h})
-\partial_{\DT\Z}^{}\mathfrak{D}(\Z_{\eps h},\theta_{\eps h};\DT{\tilde\Z}_{h})
\big){\cdot}
(\DT\Z_{\eps h}-\DT{\tilde\Z}_{h})\,\d x\d t
\\[-.3em]&\nonumber\quad
=\limsup_{h\to0}
\int_Q \big(-(p_{{\rm age},\eps h},p_{{\rm eff},\eps h})-\partial_{\DT\Z}^{}\mathfrak{D}(\Z_{\eps h},\theta_{\eps h};\DT{\tilde\Z}_{h})
\big){\cdot}(\DT\Z_{\eps h}-\DT{\tilde\Z}_{h})\,\d x\d t
\\[-.3em]&\nonumber\quad
=\limsup_{h\to0}
\int_Q\REFF{\Big(}
 \partial_\alpha \FM   ( F_{{\rm el},\eps h}  ,\Z_{\eps h},
\zeta_{\eps h})(\DT{\tilde\alpha}_{h}-\DT\alpha_{\eps h})
\\[-.4em]&\nonumber\qquad\qquad
+{\rm tr}\big(\sigma'(\phi_{\eps h})\WE{\PP^{-T}_{\eps h}}\nabla y_{\eps h}^\top
\partial_\phi\FM (F_{{\rm el},\eps h},\Z_{\eps h},\zeta_{\eps h})\big)
(\DT{\tilde\phi}_{h}-\DT\phi_{\eps h})
\\[-.2em]&\nonumber\qquad\qquad
-\big(\kappa\Delta\Z_{\eps h}
+
\partial_{\DT\Z}^{}\mathfrak{D}(\Z_{\eps h},\theta_{\eps h};\DT\Z)
\big){\cdot}(\DT{\tilde\Z}_{h}-\DT\Z_{\eps h})\REFF{\Big)}\,\d x\d t
\\[-.2em]&\nonumber\quad
=\lim_{h\to0}\int_Q \REFF{\Big(}\partial_\alpha \FM  (
F_{{\rm el},\eps h} ,\Z_{\eps h},\zeta_{\eps h})
(\DT{\tilde\alpha}_{h}-\DT\alpha_{\eps h})
\\[-.4em]&\nonumber\qquad\qquad
+{\rm tr}\big(\sigma'(\phi_{\eps h}) \WE{\PP^{-T}_{\eps h}}\nabla y_{\eps
  h}^\top \partial_\phi\FM ( F_{{\rm el},\eps h} ,\Z_{\eps h},\zeta_{\eps h})\big)
(\DT{\tilde\phi}_{h}-\DT\phi_{\eps h})
\\[-.2em]&\nonumber\qquad\qquad
+
\partial_{\DT\Z}^{}\mathfrak{D}(\Z_{\eps h},\theta_{\eps h}; \DT z
){\cdot}
( \DT{\tilde\Z}_{h}-\DT\Z_{\eps h})
-\kappa\DT{\tilde\Z}_{h}\Delta\Z_{\eps h}\REFF{\Big)}\,\d x\d t
\\[-.5em]&\qquad\nonumber
+\limsup_{h\to0}\int_\Omega
\frac{\kappa}2|\nabla\Z_0|^2-\frac{\kappa}2|\nabla\Z_{\eps h}(T)|^2\,\d x
\\[-.3em]&\quad
\le
-\int_Q\kappa\DT\Z_\eps\Delta\Z_\eps\,\d x\d t+\int_\Omega
\frac{\kappa}2|\nabla\Z_0|^2-\frac{\kappa}2|\nabla\Z_\eps(T)|^2\,\d x
=0
\nonumber
\end{align}
where $\theta_{\eps h}=C_{\rm v}^{-1}(\vartheta_{\eps h})$ and 
$\theta_{\eps}=C_{\rm v}^{-1}(\vartheta_{\eps})$. In \eqref{large-damage-strong-conv},
we used 
\eqref{flow-rule+} tested by $\DT\Z_{\eps h}-\DT{\tilde\Z}_{h}$. 
Note that this is allowed at the Galerkin approximation level.   
 Note that we have used the shorthand notation 
$F_{{\rm el},\eps h} =  \PORO(\phi_{\eps h}) \nabla y_{\eps h}\,\WE{\PP^{-1}_{\eps h}}.$
 Additionally, we also   used that 
$\vartheta_{\eps h}\to\vartheta_{\eps}$ strongly in $L^2(Q)$, due to 
the Aubin-Lions theorem (in fact,  such strong convergence holds
\EE in any $L^p(Q)$ with $1\le p<10/3$ if 
$d=3$ or $1\le p<4$ if $d=2$, cf.\ \cite{Roub13NPDE}) and 
and  also that  $\theta_{\eps h}\to\theta_{\eps}$, due to the continuity of the 
superposition operator $C_{\rm v}^{-1}(\cdot)$. 
In \eqref{large-damage-strong-conv}, we also  that 
$ \partial_\alpha \FM ( F_{{\rm el},\eps h},  \alpha_{\eps h},
\phi_{\eps h},
\zeta_{\eps h})$, $ {\rm tr}\big(\sigma'(\phi_{\eps h}) \WE{\PP^{-T}_{\eps h}}\nabla y_{\eps
  h}^\top \partial_\phi\FM (F_{{\rm el},\eps h},\Z_{\eps h},\zeta_{\eps h})\big)  $, and 
$\partial_{\DT\Z}^{}\mathfrak{D}(\Z_{\eps h},\theta_{\eps h};\DT\Z)$ with fixed 
$\DT\Z$ strongly converge in $L^2(Q)$.
The last equality in \eqref{large-damage-strong-conv} relies on the
 the fact that  
$\Delta\alpha_\eps
\in L^2(Q)$ and of $\Delta\phi_\eps\in L^2(Q)$ from the estimates 
\eqref{est-Delta-phi}. In particular, the following holds 
\begin{align}\label{calculus}
\int_Q\DT\Z_\eps\Delta\Z_\eps\,\d x\d t=\int_\Omega
\frac12|\nabla\Z_0|^2-\frac12|\nabla\Z_\eps(T)|^2\,\d x,
\end{align}
cf.\ 
 the mollification-in-space arguments e.g.\ in 
\cite[Formula (3.69)]{PoRoTo10TCTF} or \cite[Formula (12.133b)]{Roub13NPDE}.
Moreover, in the last equality in \eqref{large-damage-strong-conv}
we  have used 
$\DT{\tilde\alpha}_{h}\to\DT\alpha_\eps$
and $\DT{\tilde\phi}_{h}\to\DT\phi_\eps$ strongly in $L^2(Q)$ for
$h\to0$.  This concludes the proof of the first two convergences
in  \eqref{strong-conv-DTalpha}.



As for the strong convergence \eqref{strong-conv-DT-Pi} and 
\eqref{strong-conv-Pi-e-h}, we refer to \cite{RouSte??TELS}.

The limit passage in the Galerkin approximation of 
the semilinear Cahn-Hilliard diffusion system (\ref{system+}d,e) is easy by the 
already obtained convergences. Note that, for  any  test function $v$
valued in a finite-dimensional  Galerkin  space we have  
that 
\begin{align*}
&  \int_Q\mathbb M(\Z_{\eps h})\WE{\PP^{-1}_{\eps h}}\nabla \por_{\eps
    h}{\cdot}\WE{\PP^{-1}_{\eps h}}\nabla v\,\d x\d t
\\[-.4em]&\nonumber\qquad\qquad
\to\
  \int_Q\mathbb M(\Z_{\eps})\WE{\PP^{-1}_{\eps h}}\nabla
  \por_\eps{\cdot}\WE{\PP^{-1}_{\eps h}}\nabla v\,\d x\d t.
\end{align*}
 Indeed, this follows from  $\WE{\PP^{-1}_{\eps h}}\nabla \por_{\eps h}\to
\WE{\PP^{-1}_{\eps }}\nabla \por_\eps$ weakly in $L^2(Q;\R^d)$
and $\WE{\PP^{-1}_{\eps h}}\to\WE{\PP^{-1}_{\eps}}$ strongly in $L^{2^*-\epsilon}(Q;\R^{d\times d})$
 again  by Aubin-Lions' Theorem,  for some
 $\delta>0$.   In particular,  by  testing 
the limit equation \eqref{system-Darcy+}  on  $\por_\eps$
and  adding it to   the limit equation 
\eqref{system-Cahn-Hilliard+} tested  on  $\DT\zeta_\eps$, we
exploit a cancellation of the terms $\pm \por_\eps\DT\zeta_\eps$ and obtain
\begin{align}\label{limit-Cahn-Hilliard}
\int_Q
\tauR\DT\zeta_{\eps}^2
+\partial_\zeta^{}\FM(F_{{\rm el},\eps},\Z_{\eps},\zeta_{\eps})\DT\zeta_{\eps}
+\mathbb M(\Z_{\eps})\WE{\PP^{-1}_{\eps }}\nabla \por_\eps{\cdot}\WE{\PP^{-1}_{\eps }}\nabla \por_\eps\,\d x\d t\qquad
\\[-.3em]
+\!\int_\Omega\frac{\kappa_4}2|\nabla\zeta_{\eps}(T)|^2
-\frac{\kappa_4}2|\nabla\zeta_{0}|^2\,\d x+\int_\Sigma
M(\por_\eps{-}\mu_\flat)\por_\eps 
\,\d S\d t=0.
\nonumber
\end{align}

We now can prove the strong $L^2$-convergence of 
$\DT\zeta_{\eps h}\to\DT\zeta_\eps$ and 
$\WE{\PP^{-1}_{\eps h}}\nabla \por_{\eps h}\to
\WE{\PP^{-1}_{\eps }}\nabla \por_\eps$. These two convergences 
have to be obtained simultaneously in order to   be able to
exploit a cancelation as in \eqref{C-H-tested}.
%
Using \eqref{C-H-tested}
and denoting by $a_\mathbb{M}>0$ the positive-definiteness constant of 
$\mathbb M$, we can estimate
\begin{align*}\nonumber
&\limsup_{h\to0}\int_Q\tauR\big(\DT\zeta_{\eps h}-\DT\zeta_\eps\big)^2
+a_\mathbb{M}
\big|\WE{\PP^{-1}_{\eps h}}\nabla\por_{\eps h}-\WE{\PP^{-1}_{\eps }}\nabla\por_\eps\big|^2
\WE{\,\d x\d t}
\\\nonumber
&
\le\limsup_{h\to0}\int_Q\REFF{\Big(}\tauR\big(\DT\zeta_{\eps h}-\DT\zeta_\eps\big)^2
\\\nonumber
&\ \ +\mathbb M(\Z_{\eps h})
\big(\WE{\PP^{-1}_{\eps h}}\nabla \por_{\eps h}-\WE{\PP^{-1}_{\eps }}\nabla\por_\eps\big){\cdot}\big(\WE{\PP^{-1}_{\eps h}}\nabla \por_{\eps h}-\WE{\PP^{-1}_{\eps }}\nabla\por_\eps\big)\REFF{\Big)}\,\d x\d t
\\\nonumber
&=\limsup_{h\to0}\int_Q\REFF{\Big(}\tauR\big(\DT\zeta_{\eps h}-\DT\zeta_\eps\big)^2
+\mathscr{M}(\PP_{\eps h},\Z_{\eps h})\nabla \por_{\eps h}{\cdot}\nabla \por_{\eps h}
\\\nonumber
&\ \ -2\mathbb M(\Z_{\eps h})\WE{\PP^{-1}_{\eps h}}\nabla \por_{\eps h}{\cdot}\WE{\PP^{-1}_{\eps }}\nabla \por_\eps
+\mathbb M(\Z_{\eps h})\WE{\PP^{-1}_{\eps }}\nabla \por_\eps{\cdot}\WE{\PP^{-1}_{\eps }}\nabla \por_\eps\REFF{\Big)}\,\d x\WE{\d t}
\\\nonumber
&
\UU{\le}{by \eqref{C-H-tested}}
\lim_{h\to0}\bigg(\int_Q
\REFF{\Big(}\tauR\DT\zeta_\eps^2-2\tauR\DT\zeta_{\eps h}\DT\zeta_\eps
-\partial_\zeta^{}\FM(F_{{\rm el},\eps h},\Z_{\eps h},\zeta_{\eps h})\DT\zeta_{\eps h}
\\\nonumber
&\ \ -2\mathbb M(\Z_{\eps h})\WE{\PP^{-1}_{\eps h}}\nabla \por_{\eps h}{\cdot}\WE{\PP^{-1}_{\eps }}\nabla \por_\eps
+\mathbb M(\Z_{\eps h})\WE{\PP^{-1}_{\eps }}\nabla \por_\eps{\cdot}\WE{\PP^{-1}_{\eps }}\nabla \por_\eps\REFF{\Big)}\,\d x\d t
\\\nonumber
&\ \ +\int_\Sigma
M\mu_\flat\por_{\eps h}\,\d S\d t
+\int_\Omega\frac{\kappa_4}2|\nabla\zeta_{0 h}|^2\,\d x\bigg)
\\\nonumber
&\ \ -\liminf_{h\to0}\bigg(\int_\Sigma M\por_{\eps h}^2\,\d S\d t+\int_\Omega\frac{\kappa_4}2|\nabla\zeta_{\eps h}(T)|^2\,\d x\bigg)
\\\nonumber
&\le\int_Q
-\tauR\DT\zeta_{\eps}^2
-\partial_\zeta^{}\FM(F_{{\rm el},\eps},\Z_{\eps},\zeta_{\eps})\DT\zeta_{\eps}
-\mathbb M(\Z_{\eps})\WE{\PP^{-1}_{\eps }}\nabla
\por_\eps{\cdot}\WE{\PP^{-1}_{\eps }}\nabla \por_\eps\,\d x\d t 
\\[-1em]
&\qquad\qquad\qquad 
-\int_\Sigma
M(\por_\eps{-}\mu_\flat)\por_\eps \,\d S\d t
+\int_\Omega\frac{\kappa_4}2|\nabla\zeta_{0}|^2
-\frac{\kappa_4}2|\nabla\zeta_{\eps}(T)|^2\,\d x\UU{=}{by \eqref{limit-Cahn-Hilliard}}0.
\end{align*}
This entails the strong convergence for  $\DT\zeta_{\eps h}$ from 
\eqref{strong-conv-DTalpha}  as well as that of 
terms  $\WE{\PP^{-1}_{\eps h}}\nabla \por_{\eps h}$.
From this, we  obtain  the strong convergence
\eqref{strong-conv-mu}  for  
$\nabla \por_{\eps h}\to\nabla \por_\eps$.  Note however that this last
convergence is not exploited in the following. 


The convergence of the mechanical part for $h\to0$ is  now
straightforward. As   highest-order terms in (\ref{system+}a,c-e)
are linear,  weak convergence together and Aubin-Lions compactness for
lower-order terms  suffices. The limit passage in
the quasilinear $q$-Laplacian in (\ref{system+}b) as well as 
in the $\mathfrak{R}$- and $\mathfrak{D}$-terms  in  (\ref{system+}b,c) 
follows from the already proved strong convergences (\ref{strong-conv}c)
and (\ref{strong-conv}a,b), respectively. 

Eventually, the limit passage in the semilinear heat-transfer equation 
\eqref{system-heat+}  can be ascertained due \EE
to the already proved strong convergences  (\ref{strong-conv}a,b,d),
 allowing indeed the passage to the limit   in the (regularized) right-hand side. 
\end{proof}

 In order to remove the regularization by passing to the   limit for
$\eps\to0$, we cannot  directly  rely on  estimates
\eqref{est-theta-H1}-\eqref{est-w} and \eqref{est-of-penalization-},
for these are depending  $\eps>0$.
On the other hand,
having already  passed to the limit in $h$ we are now in the
position of performing a number of nonlinear tests for the heat equation, which are
specifically tailored to the $L^1$-theory. 

\begin{lemma}[Further a-priori estimates for
  temperature]\label{lem-est+}
 Let $\vartheta_\eps$ be the (rescaled) temperature component of
the weak solution to the regularized problem \eqref{system+} whose
existence is proved  in 
Proposition~{\rm \ref{prop-conv1}}.  Then, 
\begin{equation}
  \label{positivity}
  \vartheta_\eps \geq 0 \quad \text{a.e. in} \ Q.
\end{equation}
 Moreover, one has that 
\begin{subequations}\label{est+}\begin{align}\label{est-theta-L1}
&\exists C_1>0: \quad \|\vartheta_\eps\|_{L^\infty(I;L^1(\Omega))}\le C_1,
\\&
\forall 1\le s<(d{+}2)/(d{+}1) \ \exists C_s >0 : \quad
    \|\nabla\vartheta_\eps\|_{L^s(Q;\R^d)}\le C_s
\label{est-theta}
\end{align}\end{subequations}
where the constants $C_1, \, C_s$ are independent of $\eps$.
\end{lemma}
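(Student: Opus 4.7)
The plan is to prove the three assertions by testing the heat equation \eqref{system-heat+} with a sequence of progressively more delicate test functions, exploiting the fact that the regularized heat source $r_\eps$ is bounded in $L^1(Q)$ uniformly in $\eps$, as a byproduct of the mechanical energy balance \eqref{energy-conserv}. The Boccardo-Gallou\"et interpolation used for \eqref{est-theta} will be the main technical step.

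For the positivity \eqref{positivity}, I would test \eqref{system-heat+} with the negative part $-(\vartheta_\eps)^-$, with $(\cdot)^-:=\max(-\,\cdot\,,0)\ge0$ (via Lipschitz truncation and standard chain rule arguments). The time derivative produces $\frac12\frac{\d}{\d t}\!\int_\Omega|(\vartheta_\eps)^-|^2\,\d x$; the diffusion term yields the non-negative quantity $\int_\Omega\mathfrak{K}\nabla(\vartheta_\eps)^-{\cdot}\nabla(\vartheta_\eps)^-\,\d x$ by positive-definiteness of $\mathfrak{K}$; extending $C_{\rm v}^{-1}$ by $0$ on $(-\infty,0]$ consistently with the definition of $\vartheta$, the boundary term $\int_\Gamma K(C_{\rm v}^{-1}(\vartheta_\eps)-\theta_{\flat\eps})(-\vartheta_\eps)^-\,\d S$ is non-negative since $\theta_{\flat\eps}\ge0$; finally the source contribution $-\int_\Omega r_\eps(\vartheta_\eps)^-\,\d x$ is non-positive because $r_\eps\ge0$. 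Combined with $\vartheta_{0\eps}=C_{\rm v}(\theta_{0\eps})\ge0$, Gronwall yields $(\vartheta_\eps)^-\equiv0$.

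For \eqref{est-theta-L1}, I would test \eqref{system-heat+} with the constant $1$ to obtain
\begin{align*}
\frac{\d}{\d t}\int_\Omega\vartheta_\eps\,\d x + \int_\Gamma K\,C_{\rm v}^{-1}(\vartheta_\eps)\,\d S
= \int_\Omega r_\eps\,\d x + \int_\Gamma K\theta_{\flat\eps}\,\d S.
\end{align*}
By \eqref{regularization-r} one has $0\le r_\eps\le r$, and by the mechanical energy balance derived for the limit solution of Proposition \ref{prop-conv1}, $\|r\|_{L^1(Q)}\le C$ uniformly in $\eps$. Moreover $\|\theta_{\flat\eps}\|_{L^1(\Sigma)}\le\|\theta_\flat\|_{L^1(\Sigma)}$ by \eqref{regularization-BC-IC} and $\|\vartheta_{0\eps}\|_{L^1(\Omega)}\le C$ by the boundedness of $c_{\rm v}$ in \eqref{ass-cv}. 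Dropping the non-negative boundary term on the left-hand side (thanks to the just-proved positivity) and integrating in time gives \eqref{est-theta-L1}.

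For \eqref{est-theta} I would adapt the Boccardo-Gallou\"et technique to an equation with $L^1$ right-hand side. Specifically, test \eqref{system-heat+} with the bounded and Lipschitz function $\psi_\omega(\vartheta_\eps)=1-(1+\vartheta_\eps)^{-\omega}$ for a small $\omega\in(0,1)$, whose derivative $\psi_\omega'(\vartheta)=\omega(1+\vartheta)^{-1-\omega}>0$ generates the control of the gradient. Using positive-definiteness of $\mathfrak{K}$, boundedness of $\psi_\omega$, the $L^1$-bound on $r_\eps$, and the non-negativity of the resulting boundary contribution, one obtains
\begin{align*}
\int_Q\frac{|\nabla\vartheta_\eps|^2}{(1+\vartheta_\eps)^{1+\omega}}\,\d x\,\d t\le\frac{C}{\omega}.
\end{align*}
For $1\le s<2$, H\"older's inequality yields
\begin{align*}
\int_Q|\nabla\vartheta_\eps|^s\,\d x\,\d t
\le\bigg(\int_Q\frac{|\nabla\vartheta_\eps|^2}{(1+\vartheta_\eps)^{1+\omega}}\bigg)^{\!s/2}
\bigg(\int_Q(1+\vartheta_\eps)^{\frac{s(1+\omega)}{2-s}}\bigg)^{\!(2-s)/2}.
\end{align*}
The last factor is then controlled by a Gagliardo-Nirenberg interpolation between $\vartheta_\eps\in L^\infty(I;L^1(\Omega))$ (from \eqref{est-theta-L1}) and $\nabla\vartheta_\eps\in L^s(Q;\R^d)$. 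A standard bookkeeping of the interpolation exponents shows that the resulting inequality self-improves (bootstraps) provided $s(1+\omega)/(2-s)$ lies below the Sobolev critical exponent interpolated in space-time; this is precisely the constraint $s<(d+2)/(d+1)$, which can be saturated by taking $\omega$ sufficiently small. The main obstacle is this last interpolation step, which has to be performed carefully so that the constant remains $\eps$-independent.
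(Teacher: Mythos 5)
Your proof is correct and is precisely the argument the paper intends: for this lemma the paper itself only writes ``See \cite{RouSte??TELS}'', and that reference (as announced in the text just before the lemma, ``nonlinear tests \dots tailored to the $L^1$-theory'') carries out exactly your three tests---the negative part for \eqref{positivity}, the constant $1$ for \eqref{est-theta-L1}, and the Boccardo--Gallou\"et test $1-(1+\vartheta_\varepsilon)^{-\omega}$ combined with Gagliardo--Nirenberg interpolation between $L^\infty(I;L^1(\Omega))$ and $L^s(I;W^{1,s}(\Omega))$ for \eqref{est-theta}, with the $L^1(Q)$-bound on $r$ coming from the $\varepsilon$-independent mechanical estimates {\rm(\ref{est}a--f)}. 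The one ingredient you leave implicit, the $\varepsilon$-uniform positive definiteness of $\mathfrak{K}$, follows from \eqref{ass-M-K} together with the $\varepsilon$-independent bounds on $\PP_\varepsilon$ and $1/\det\PP_\varepsilon$ in \eqref{est-P}, exactly as in \eqref{estimation-of-mu}.
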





\begin{proof}
See \cite{RouSte??TELS}.
\end{proof}

\begin{proposition}[Convergence of the regularization for $\eps\to0$]\label{prop-conv2}
 Under assumptions \eqref{ass}, as  $\eps\to0$ there 
exists a    subsequence of
$\{(y_{\eps},\PP_{\eps},\alpha_{\eps},\phi_{\eps},\zeta_{\eps},\por_{\eps},\vartheta_{\eps})\}_{\eps>0}^{}$
(not relabeled)
which converges weakly* in the topologies indicated in 
{\rm(\ref{est}a-f)}, \eqref{est++}, and 
\eqref{est+} to some $(y,\PP,\alpha,\phi,\zeta,\por,\vartheta)$.
Every such a limit seven-tuple is a weak solution to the original problem 
in the sense of Definition~{\rm \ref{def}}. Moreover, the following 
strong convergences hold
\begin{subequations}\label{strong-conv+}
\begin{align}&&&\label{strong-conv-DT-Pi+}
\DT\PP_{\eps}^{}\PP_{\eps}^{-1}\to\DT\PP\PP^{-1}&&\text{strongly in }\ L^2(Q;\R^{d\times d}),
\\&&&\label{strong-conv-DTalpha+}
\DT\alpha_{\eps}\to\DT\alpha\ \text{ and }\ 
\DT\phi_{\eps}\to\DT\phi\ \text{ and }\ 
\DT\zeta_{\eps}\to\DT\zeta&&\text{strongly in }\ L^2(Q),
\\&&&\label{strong-conv-Pi-e-h+}
\nabla\PP_{\eps}\to\nabla\PP&&\text{strongly in }\ L^q(Q;\R^{d\times d\times d}),&&&&
\\&&&\label{strong-conv-mu+}
\nabla\mu_{\eps}\to\nabla\mu&&\text{strongly in }\ L^2(Q;\R^d).
\end{align}\end{subequations}
 Eventually,  the regularity \eqref{Laplaceans} and 
the energy conservation \eqref{energy-conserv++} hold.
\end{proposition}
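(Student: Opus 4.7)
The plan is to mimic the argument of Proposition~\ref{prop-conv1}, now letting the regularization parameter $\eps\to0$ in place of the Galerkin mesh-size $h$. Two features demand extra care: the $L^1$-structure of the estimates for $\vartheta_\eps$ from Lemma~\ref{lem-est+}, and the removal of the Yosida penalty $\mathfrak{N}_\eps$, which must yield the unilateral constraint $\zeta\in[0,1]$ inside the variational inequality \eqref{mu-weak}.

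First I would extract a (not relabelled) weakly* converging subsequence from the $\eps$-uniform bounds in (\ref{est}a-f), \eqref{est++} and \eqref{est+} by Banach--Alaoglu. Comparison in \eqref{momentum-eq+} and \eqref{system-heat+} controls $\DDT y_\eps$ and $\DT\vartheta_\eps$ in suitable negative-order Sobolev spaces, so that Aubin--Lions delivers strong convergence of $y_\eps,\PP_\eps,\alpha_\eps,\phi_\eps,\zeta_\eps$ in $L^p(Q)$ for a suitable range of $p$, as well as of $\vartheta_\eps\to\vartheta$ in $L^1(Q)$; the $L^\infty(I;W^{1,q})$-bound on $\PP_\eps$ together with the Healey--Kr\"omer-type bound in \eqref{est-P} for $q>d$ also yields strong convergence of $\PP_\eps^{-1}$ and $1/\det\PP_\eps$ in $L^\infty(Q)$. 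I would then upgrade these to the strong convergences \eqref{strong-conv+} by rerunning the strong-monotonicity/Cahn--Hilliard-cancellation procedure of \eqref{large-damage-strong-conv}--\eqref{limit-Cahn-Hilliard}; at this stage the test functions can be taken to be the limits themselves because no Galerkin projection is needed.

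Second, I would pass to the limit in each equation in the sense of Definition~\ref{def}. The limits in \eqref{momentum-weak}, \eqref{weak-form-P} and \eqref{damage-porosity-weak} follow from weak lower semicontinuity of the convex highest-order terms combined with the strong convergences already established. For the water-transport part, the variational inequality \eqref{mu-weak} is recovered by testing \eqref{system-Cahn-Hilliard+} by $\tilde\zeta-\zeta_\eps$ for smooth $\tilde\zeta$ valued in $[0,1]$: convexity of $\zeta\mapsto\FM$ and the monotone estimate $\int_Q\mathfrak{N}_\eps(\zeta_\eps)(\tilde\zeta-\zeta_\eps)\,\d x\,\d t\le 0$ make the limit passage standard, while \eqref{est-of-penalization} forces $\zeta\in[0,1]$ a.e. The regularity \eqref{Laplaceans} follows from the $\eps$-independent bounds \eqref{est++}.

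The main obstacle is the passage to the limit in the heat equation together with the recovery of the energy identity \eqref{energy-conserv++} as an equality. Since $0\le r_\eps\le r$ pointwise and all the relevant rates converge strongly in $L^2(Q)$ by \eqref{strong-conv+}, one gets $r_\eps\to r$ a.e., and then in $L^1(Q)$ by Vitali's theorem, exploiting the a-priori $L^1$-bound on $r$ coming from the mechanical energy balance. The low-regularity bound \eqref{est-theta} then suffices to pass to the limit in the weak formulation of the heat equation against smooth test functions, while $\theta_{\flat\eps}\to\theta_\flat$ and $\theta_{0\eps}\to\theta_0$ in $L^1$ by dominated convergence from \eqref{regularization-BC-IC}. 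Finally \eqref{energy-conserv++} is obtained by writing the mechanical energy identity at the $\eps$-level, adding to it the spatial integral of \eqref{system-heat+} so that the $r_\eps$-terms cancel, and passing to the limit: weak lower semicontinuity of the kinetic and gradient terms gives one inequality, while the strong convergences of rates and of $\nabla\por_\eps$ from \eqref{strong-conv+} produce the matching reverse inequality, forcing equality.
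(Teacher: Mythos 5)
Your overall strategy coincides with the paper's: Banach selection from the $\eps$-uniform bounds, rerunning the strong-monotonicity and Cahn--Hilliard cancellation arguments of Proposition~\ref{prop-conv1} (simplified, as you note, because no Galerkin projections are needed), handling the constraint via the monotone structure of $\mathfrak{N}_\eps$ together with \eqref{est-of-penalization}, and obtaining $r_\eps\to r$ in $L^1(Q)$ from the strong $L^2$-convergence of the rates. Your treatment of \eqref{mu-weak} via the pointwise sign $\mathfrak{N}_\eps(\zeta_\eps)(\tilde\zeta-\zeta_\eps)\le0$ for $\tilde\zeta$ valued in $[0,1]$ is equivalent to the paper's use of the primitive $\widehat{\mathfrak{N}}_\eps$ and the fact that $\widehat{\mathfrak{N}}_\eps(\tilde\zeta)=0$ there.

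The one step that is wrong as stated is the derivation of \eqref{energy-conserv++}: at the $\eps$-level the $r_\eps$-terms do \emph{not} cancel when you add the spatial integral of \eqref{system-heat+} to the mechanical energy identity. The mechanical balance carries the full dissipation $r$ (it comes from testing the mechanical equations by the rates, which the regularization does not touch), whereas the regularized heat equation sources only $r_\eps\le r$; the sum therefore retains the nonnegative residual $\int_0^t\!\int_\Omega(r-r_\eps)\,\d x\d t$, i.e.\ the $\eps$-approximation does not conserve total energy. This is repairable in one line from material you already have --- $r_\eps\to r$ in $L^1(Q)$ makes the residual vanish --- but it must be said. The paper avoids the issue by assembling the balance at the limit level instead: it establishes the limit mechanical energy identity (through the same strong convergences) and adds to it the limit heat equation, where the source is the unregularized $r$ and the cancellation is exact; the only point needing care there is that the constant test function $1$ can be put in duality with $\DT\vartheta$, so that the chain rule for $t\mapsto\int_\Omega\vartheta\,\d x$ applies despite the low regularity of $\vartheta$.
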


\begin{proof}
Again, by the Banach selection principle, we  can extract a not
relabeled   weakly* convergent
subsequence  with respect to the topologies in 
{\rm(\ref{est}a-f)}, \eqref{est++}, and 
\eqref{est+} and  indicate its limit by  $(y,\PP,\alpha,\phi,\zeta,\por,\vartheta)$.
\def\Z{z}

The improved, strong convergences \eqref{strong-conv+}  can be
obtained by arguing as in 
the proof of \eqref{strong-conv} in Proposition~\ref{prop-conv1},
cf.\ \cite{RouSte??TELS} for details as far as the plastic strain concerns. 

 The passage to the limit into the various relations follows
similarly as in the proof of Proposition \ref{prop-conv1}. Instead
of repeating the whole argument, we limit ourselves in pointing out
the few differences. 

 A first difference concerns the  limit passage towards the 
inclusion governing $\mu$, due to the  presence of the  constraints $0\le\zeta\le1$.  
From \eqref{est-of-penalization}  one has that  $\zeta$
is valued in $[0,1]$. To facilitate the limit passage towards the variational 
inequality \eqref{mu-weak}, we write \eqref{system-Cahn-Hilliard+} in the form
\begin{align*}\nonumber
&\int_Q\FM( F_{{\rm el},\eps},  \alpha_\eps,\phi_\eps,\tilde\zeta)
-\FM( F_{{\rm el},\eps}, \alpha_\eps,\phi_\eps,\zeta_\eps)-\por(\tilde\zeta-\zeta_\eps)
-\kappa_4\nabla\zeta_\eps{\cdot}\nabla(\tilde\zeta-\zeta_\eps)
\\[-.3em]&\nonumber\qquad
+\tauR\DT\zeta_\eps\tilde\zeta+\widehat{\mathfrak{N}}_\eps(\tilde\zeta)\,\d x\d t
+\int_\Omega\frac12\tauR\zeta^2_0\,\d x
\ge\int_Q\widehat{\mathfrak{N}}_\eps(\zeta_\eps)\,\d x\d t
+\int_\Omega\frac12\tauR\zeta_\eps^2(T)\,\d x
\end{align*} 
 where  $\widehat{\mathfrak{N}}_\eps$  is the primitive  of 
$\mathfrak{N}_\eps$ from \eqref{regularization-of-N} with 
$\mathfrak{N}_\eps(0)=0$. For  all  $\tilde\zeta$ valued in
$[0,1]$, see
\eqref{mu-weak}, the term $\widehat{\mathfrak{N}}_\eps(\tilde\zeta)$
vanishes.  The limit passage hence ensues by classical  
continuity or lower semicontinuity arguments. 

The strong convergence of $\vartheta_\eps$  follows  again by the Aubin-Lions 
Theorem.  Nevertheless, we use here a  
coarser topology  with respect to  than in
Proposition~\ref{prop-conv1}.  This change is however immaterial
with respect to  the limit passage in the mechanical
part (\ref{system}a-e). Actually, some arguments are even 
simplified, for  we do not need to approximate the limit into the 
finite-dimensional 
subspaces as we  did  in \eqref{large-damage-strong-conv}.
%
%
The heat-production rate $r$ on the right-hand side
of \eqref{system-heat+} converges now strongly in $L^1(Q)$.

 Eventually, the regularity \eqref{Laplaceans} can be obtained from the estimates 
\eqref{est++}, which are uniform in $\eps>0$.
%
%
The energy conservation \eqref{energy-conserv++} follows directly from the 
energy conservation in the mechanical part, as essentially used 
above while checking  the strong convergences  \eqref{strong-conv+}. 
Indeed, one integrates \eqref{energy-conserv+} over $[0,t]$ and
sum it to the heat equation tested on the constant 1. Note that this
is amenable as the constant $1$ can be put in  
duality with $\DT\vartheta$, so that the chain-rule applies.  
\end{proof}

\REFF{
\section{Conclusion}\label{sec-concl}
We have addressed a model used in geophysics for poroelastic damagable rocks
with plastic-like strain, which can accomodate the large displacement
occuring during long geological time scales. The model is anisothermal,
so e.g.\ effects of flash heating on tectonic faults during ongoing earthquakes 
can be captured in this model; this may immitate a popular 
Dieterich-Ruina rate-and-state friction model  \cite{Diet79MRF,Ruin83SISVFL} 
which otherwise does not seem to allow for a rational thermodynamical formulation, 
cf.\ \cite{Roub14NRSD} for this interpretation at small-strain context.

Also inertia is considered, so 
that seismic waves emitted during tectonic earthquakes in the Earth crust 
(i.e.\ the solid, very upper part of the mantle) can be captured in the model.  
The model is formulated at large strains and complies with  frame
indifference. The main assumption of the model is that the elastic
Green-Lagrange strain is small. In contrast with the small-strain
but large-displacement model in \cite{LyHaBZ11NLVE},
where the energy does not seem to be completely conserved no matter 
how the Korteweg-like stress (usually balacing the energy) is devised,
cf.\ \cite{Roub17GMHF}, the present large-strain model is thermodynamically
consistent. 
At the same time, our assumption about the smallness of 
elastic Green-Lagrange strains and the nearly isochoric nature of  
plastification is not in direct conflict with the mentioned geophysical 
applications.

 
The smallness assumption on the elastic 
Green-Lagrange strains could be avoided by suitably modifying
relations and the analytical treatment in the existence proof. Namely, one could consider a nonlocal 
nonsimple gradient theory for the total strain, which would allow to
control the displacement 
in the Sobolev-Slobodetski\u\i\ Hilbert space $W^{2+\gamma,2}(\Omega;\R^d)$ with 
$\gamma>d/2$. Then, one could use the Healey-Kr\"omer theorem twice, both for 
$F_{\rm el}$ and for $\PP$, provided $\FM$ has sufficiently fast growth 
for $\det F_{\rm el}\to~0+$, cf.\
\cite[Sect.\,9.4.3]{KruRou18MMCM}. Such 
nonlocal models allow for the description of more general dispersion phenomena, as shown in 
\cite{Jira04NTCM}. Another relevant option could be that of
considering a gradient 
theory for $F_{\rm el}$ rather than for $F$ but, at this moment, this
seems to pose analytical difficulties.  

This also opens a possibility of avoiding Dirichlet boundary condition for 
$\PP$ completely,
as already mentioned \cite[Remark 4.5]{RouSte??TELS} but rather for the case of 
full hardening only. Here, 
controlling the inverse the elastic strain $F_{\rm el}$ as outlined above 
would allow us to estimate $\PP = F_{\rm el}^{-1} \nabla y S^{-1}$. Thus such approach would be 
amenable even in the most natural case of homogeneous Neumann boundary conditions 
for $\PP$ on the whole boundary $\partial\Omega$.
}

\section*{Acknowledgments}
The authors are very thankful to Alexander Mielke for inspiring 
conversations about the model and to Vladimir Lyakhovsky for many discussions 
during previous years as well as comments to a working version of this
paper. 


\begin{thebibliography}{10}

\bibitem{Antm95NPE}
{\sc S.~S. Antman}, {\em \REF{Nonlinear Problems of Elasticity}},
  \REF{Springer}, New York, 2nd~ed., 2005.

\bibitem{Bear18MPFT}
{\sc J.~Bear}, {\em \REF{Modeling Phenomena of Flow and Transport in Porous
  Media}}, \REF{Springer}, Switzerland, 2018.

\bibitem{BeZiSam03CFZ}
{\sc Y.~Ben-Zion and C.~G. Sammis}, {\em Characterization of fault zones}, Pure
  appl. Geophys., 160 (2003), pp.~677--715.

\bibitem{Biot41GTTS}
{\sc M.~A. Biot}, {\em General theory of three-dimensional consolidation}, J.
  Appl. Phys., 12 (1941), pp.~155--164.

\bibitem{Biot73NSRP}
{\sc M.~A. Biot}, {\em Nonlinear and semilinear rheology of porous solids}, J.
  Geophys. Res., 78 (1973), pp.~4924--4937.

\bibitem{BoDrSc03GSGC}
{\sc E.~Bonetti, W.~Dreyer, and G.~Schimperna}, {\em Global solutions to a
  generalized cahn-hilliard equation with viscosity}, Adv. Differential
  Equations, 8 (2003), pp.~231--256.

\bibitem{CahHil58FEUS}
{\sc J.~Cahn and J.~Hilliard}, {\em Free energy of a uniform system {I}.,
  {I}nterfacial free energy}, J. Chem. Phys., 28 (1958), pp.~258--267.

\bibitem{CheAna10CTFP}
{\sc S.~A. Chester and L.~Anand}, {\em \reff{A coupled theory of fluid
  permeation and large deformations for elastomeric materials}}, \REFF{J. Mech.
  Phys. Solids}, 58 (2010), pp.~1879--1906.

\bibitem{Ciar88ME1}
{\sc P.~G. Ciarlet}, {\em \REF{Mathematical Elasticity. {V}ol. {I}:
  Three-Dimensional Elasticity}}, North-Holland, \REF{Amsterdam}, 1988.

\bibitem{Cocc15AT}
{\sc M.~{Cocco et al.}}, {\em The {L'}{A}quila trial}, in Geoethics: The Role
  and Responsibility of Geoscientists, S.~Peppoloni and G.~Di~Capua, eds.,
  Geological Society, London, 2015, pp.~43--55.

\bibitem{Cous04P}
{\sc O.~Coussy}, {\em \REF{Poromechanics}}, \REF{J.Wiley}, Chichester, 2004.

\bibitem{Cund89NELF}
{\sc P.~A. Cundall}, {\em Numerical experiments on localization in frictional
  materials}, Ign. Arch., 59 (1989), pp.~148--159.

\bibitem{Boer05TCMP}
{\sc S.~{de Boer}}, {\em \REFF{Trends in Continuum Mechanics of Porous Media}},
  \REFF{Springer}, \REFF{Dordrecht}, 2005.

\bibitem{GroMaz84NET}
{\sc S.~{de Groot} and P.~Mazur}, {\em \REF{Non-equilibrium Thermodynamics}},
  \REF{Dover}, New York, 1984.

\bibitem{Diet79MRF}
{\sc J.~Dieterich}, {\em Modelling of rock friction. {P}art 1: {E}xperimental
  results and constitutive equations}, J. Geophys. Res., 84(B5) (1979),
  pp.~2161--2168.

\bibitem{DuSoFi10TSMF}
{\sc F.~P. Duda, A.~C. Souza, and E.~Fried}, {\em A theory for species
  migration in a finitely strained solid with application to polymer network
  swelling}, J. Mech. Phys. Solids, 58 (2010), pp.~515--529.

\bibitem{EllGar96CHED}
{\sc C.~M. Elliott and H.~Garcke}, {\em On the {C}ahn-{H}illiard equation with
  degenerate mobility}, SIAM J. Math. Anal., 27 (1996), pp.~404--423.

\bibitem{FriGur06TBBC}
{\sc E.~Fried and M.~E. Gurtin}, {\em Tractions, balances, and boundary
  conditions for nonsimple materials with application to liquid flow at
  small-lenght scales}, Arch.\ Ration. Mech.\ Anal., 182 (2006), pp.~513--554.

\bibitem{GovSim93CSD2}
{\sc S.~Govindjee and J.~C. Simo}, {\em Coupled stress-diffusion: case {II}},
  J. Mech. Phys. Solids, 41 (1993), pp.~863--887.

\bibitem{GriNov99VCHE}
{\sc M.~Grinfeld and A.~Novick-Cohen}, {\em The viscous {C}ahn-{H}illiard
  equation: {M}orse decomposition and structure of the global attractor},
  Trans. Amer. Math. Soc., 351 (1996), pp.~2375--2406.

\bibitem{Gurt81TFE}
{\sc M.~E. Gurtin}, {\em \REF{Topics in Finite Elasticity}}, \REF{SIAM},
  Philadelphia, 1983.

\bibitem{Gurt96GGLC}
{\sc M.~E. Gurtin}, {\em Generalized {G}inzburg-{L}andau and {C}ahn-{H}illiard
  equations based on a microforce balance}, Phys. D, 92 (1996),
  pp.~178--–192.

\bibitem{HaLyAg04EDPP}
{\sc Y.~Hamiel, V.~Lyakhovsky, and A.~Agnon}, {\em Coupled evolution of damage
  and porosity in poroelastic media: Theory and applications to deformation of
  porous rocks}, Geophys. J. Int., 156 (2004), pp.~701--713.

\bibitem{HaLyAg05DRDC}
{\sc Y.~Hamiel, V.~Lyakhovsky, and A.~Agnon}, {\em Poroelastic damage rheology:
  dilation, compaction, and failure of rocks}, Geochem. Geophys. Geosyst., 6
  (2005), p.~Q01008.

\bibitem{HaLyAg05RDND}
{\sc Y.~Hamiel, V.~Lyakhovsky, and A.~Agnon}, {\em Rock dilation, nonlinear
  deformation, and pore pressure change under shear}, Earth and Planetary
  Science Letters, 237 (2005), pp.~577--589.

\bibitem{HaLyBZ11ESED}
{\sc Y.~Hamiel, V.~Lyakhovsky, and Y.~Ben-Zion}, {\em The elastic strain energy
  of damaged solids with applications to non-linear deformation of crystalline
  rocks}, Pure Appl. Geophys.,  (2011).

\bibitem{HeaKro09IWSS}
{\sc T.~J. Healey and S.~Kr\"omer}, {\em I\we{njective weak solutions in
  second-gradient nonlinear elasticity}}, \WE{ESAIM: Control, Optim. \& Cal.
  Var.}, \WE{15} (\WE{2009}), pp.~863--871.

\bibitem{HeKrRoRo??TDPF}
{\sc C.~Heinemann, C.~Kraus, E.~Rocca, and R.~Rossi}, {\em A
  temperature-dependent phase-field mo\-del for phase separation and damage},
  Arch. Ration. Mech. Anal., \WE{225} (\WE{2017}), pp.~\WE{177--247}.

\bibitem{HZZS08TCDL}
{\sc W.~Hong, X.~Zhao, J.~Zhou, and Z.~Suo}, {\em A \reff{theory of coupled
  diffusion and large deformation in polymeric gels}}, \REFF{J. Mech. Phys.
  Solids}, \REFF{56} (\REFF{2008}), pp.~1779--1793.

\bibitem{Jira04NTCM}
{\sc M.~J\WE{ir{\'a}sek}}, {\em \we{Nonlocal theories in continuum mechanics}},
  \WE{Acta Polytechnica}, 44 (2004), pp.~16--34.

\bibitem{Kron60AKVE}
{\sc E.~Kr\"oner}, {\em Allgemeine {K}ontinuumstheorie der {V}ersetzungen und
  {E}igenspannungen}, Arch. Ration. Mech. Anal., 4 (1960), pp.~273--334.

\bibitem{KruRou18MMCM}
{\sc M.~Kru{\v{z}}{\'\i}k and T.~Roub{\'\i}{\v{c}}ek}, {\em Mathematical
  Methods in Continuum Mechanics of Solids}, IMM Series, Springer,
  Cham/Heidelberg, to appear 2018.

\bibitem{LeeLiu67FSEP}
{\sc E.~Lee and D.~Liu}, {\em Finite-strain elastic-plastic theory with
  application to plain-wave analysis}, J. Applied Phys., 38 (1967), pp.~19--27.

\bibitem{LyaBeZ08SREA}
{\sc V.~Lyakhovsky and Y.~Ben-Zion}, {\em Scaling relations of earthquakes and
  aseismic deformation in a damage rheology model}, Geophys. J. Int., 172
  (2008), pp.~651--662.

\bibitem{LyaBeZ09EGMP}
{\sc V.~Lyakhovsky and Y.~Ben-Zion}, {\em Evolving geometrical and material
  properties of fault zones in a damage rheology model}, Geochem. Geophys.
  Geosyst., 10 (2009), p.~Q11011.

\bibitem{LyaBeZ14CDBF}
{\sc V.~Lyakhovsky and Y.~Ben-Zion}, {\em A continuum damage-breakage faulting
  model and solid-granular transitions}, Pure Appl. Geophys., 171 (2014),
  pp.~3099--3123.

\bibitem{LyBZAg97DDFF}
{\sc V.~Lyakhovsky, Y.~Ben-Zion, and A.~Agnon}, {\em Distributed damage,
  faulting, and friction}, J. Geophysical Res., 102 (1997), pp.~27,635--27,649.

\bibitem{LyaHam07DEFF}
{\sc V.~Lyakhovsky and Y.~Hamiel}, {\em Damage evolution and fluid flow in
  poroelastic rock}, Izvestiya, Physics of the Solid Earth, 43 (2007),
  pp.~13--23.

\bibitem{LyHaBZ11NLVE}
{\sc V.~Lyakhovsky, Y.~Hamiel, and Y.~Ben-Zion}, {\em A non-local visco-elastic
  damage model and dynamic fracturing}, J. Mech. Phys. Solids, 59 (2011),
  pp.~1752--1776.

\bibitem{LyaMya84BECS}
{\sc V.~Lyakhovsky and V.~P. Myasnikov}, {\em On the behavior of elastic
  cracked solid}, Phys. Solid Earth, 10 (1984), pp.~71--75.

\bibitem{LRWS97NEBD}
{\sc V.~Lyakhovsky, Z.~Reches, R.~Weiberger, and T.~Scott}, {\em Nonlinear
  elastic behaviour of damaged rocks}, Geophys. J. Int., 130 (1997),
  pp.~157--166.

\bibitem{LyZhSh15VPDM}
{\sc V.~Lyakhovsky, W.~Zhu, and E.~Shalev}, {\em Visco-poroelastic damage model
  for brittle-ductile failure of porous rocks}, J. Geophys. Res.: Solid Earth,
  120 (2015), pp.~2179--2199.

\bibitem{Miel02FELG}
{\sc A.~Mielke}, {\em Finite elastoplasticity, {L}ie groups and geodesics on
  {S}{L}$(d)$}, in Geometry, Mechanics, and Dynamics, P.~Newton, A.~Weinstein,
  and P.~J. Holmes, eds., Springer--Verlag, New York, 2002, pp.~61--90.

\bibitem{MiRoSa??GERV}
{\sc A.~Mielke, R.~Rossi, and G.~Savar\'e}, {\em Global existence results for
  viscoplasticity at finite strain}, (Preprint No. 2304, WIAS, Berlin).

\bibitem{MinEsh68FSTL}
{\sc R.~Mindlin and N.~Eshel}, {\em On first strain-gradient theories in linear
  elasticity}, Intl. J. Solid Structures, 4 (1968), pp.~109--124.

\bibitem{Podi02CISM}
{\sc P.~Podio-Guidugli}, {\em Contact interactions, stress, and material
  symmetry, for nonsimple elastic materials}, Theor. Appl. Mech., 28-29 (2002),
  pp.~261--276.

\bibitem{PoRoTo10TCTF}
{\sc P.~{Podio Guidugli}, T.~Roub\'\i\v{c}ek, and G.~Tomassetti}, {\em A
  thermodynamically-consistent theory of the ferro/para\-magnetic transition},
  Arch. Ration. Mech. Anal., 198 (2010), pp.~1057--1094.

\bibitem{PoGiVi10HHCS}
{\sc P.~Podio-Guidugli and M.~Vianello}, {\em Hypertractions and hyperstresses
  convey the same mechanical information}, Contin. Mech. Thermodyn., 22 (2010),
  pp.~163--176.

\bibitem{Raja07HAMF}
{\sc K.~R. Rajagopal}, {\em On a hierarchy of approximate models for flows of
  incompressible fluids through porous solids}, Math. Models Meth. Appl. Sci.,
  17 (2007), pp.~215--252.

\bibitem{Ross05TCGV}
{\sc R.~Rossi}, {\em On two classes of generalized viscous {C}ahn-{H}illiard
  equations}, Comm. Pure Appl. Anal., 4 (2005), pp.~405--430.

\bibitem{Roub13NPDE}
{\sc T.~Roub{\'\i}{\v{c}}ek}, {\em Nonlinear Partial Differential Equations
  with Applications}, Birkh\"auser, Basel, 2nd~ed., 2013.

\bibitem{Roub13NCTV}
{\sc T.~Roub{\'\i}{\v{c}}ek}, {\em Nonlinearly coupled
  thermo-visco-elasticity}, Nonlin. Diff. Eq. Appl., 20 (2013), pp.~1243--1275.

\bibitem{Roub14NRSD}
{\sc T.~Roub{\'\i}{\v{c}}ek}, {\em A note about the rate-and-state-dependent
  friction model in a thermodynamical framework of the {B}iot-type equation},
  Geophysical J. Intl., 199 (2014), pp.~286--295.

\bibitem{Roub17GMHF}
{\sc T.~Roub{\'\i}{\v{c}}ek}, {\em Geophysical models of heat and fluid flow in
  damageable poro-elastic continua}, Contin. Mech. Thermodyn., 29 (2017),
  pp.~625--646.

\bibitem{RouSte??TELS}
{\sc T.~Roub{\'\i}{\v{c}}ek and U.~Stefanelli}, {\em Thermoelasticity at large
  strains with isochoric creep or viscoplasticity under small elastic strain},
  Math. Mech. Solids., in print,  (2018).
\newblock \WE{(Preprint arXiv, no.1804.05742)}.

\bibitem{RouTom??TMPE}
{\sc T.~Roub{\'\i}{\v{c}}ek and G.~Tomassetti}, {\em Thermodynamics of magneto-
  and poro-elastic materials under diffusion at large strains}, Zeit. angew.
  Math. Phys., 69 (2018).
\newblock Art. no. 55.

\bibitem{Ruin83SISVFL}
{\sc A.~Ruina}, {\em Slip instability and state variable friction laws}, J.
  Geophys. Res., 88 (1983), pp.~10,359--10,370.

\bibitem{Silh88PTNB}
{\sc M.~{\v{S}}ilhav\'{y}}, {\em Phase transitions in non-simple bodies}, Arch.
  Ration. Mech. Anal., 88 (1985), pp.~135--161.

\bibitem{Toup62EMCS}
{\sc R.~A. Toupin}, {\em Elastic materials with couple stresses}, Arch. Ration.
  Mech. Anal., 11 (1962), pp.~385--414.

\bibitem{TriAif86GALD}
{\sc N.~Triantafyllidis and E.~Aifantis}, {\em A gradient approach to
  localization of deformation. {I}. {H}yperelastic materials}, J. Elast., 16
  (1986), pp.~225--237.

\end{thebibliography}
\end{document}